\newtheorem{theorem}{Theorem}[section]
\newtheorem{lemma}[theorem]{Lemma}
\theoremstyle{definition}
\newtheorem{definition}[theorem]{Definition}
\newtheorem{example}[theorem]{Example}
\newtheorem{proposition}[theorem]{Proposition}
\newtheorem{corollary}[theorem]{Corollary}
\theoremstyle{remark}
\newtheorem{remark}[theorem]{Remark}
\numberwithin{equation}{section}
\begin{document}

% \title[short text for running head]{full title}
\title{Quasi-Elliptic Cohomology and its Spectrum}

%    Only \author and \address are required; other information is
%    optional.  Remove any unused author tags.

%    author one information
% \author[short version for running head]{name for top of paper}

\author{Zhen Huan}

\address{Zhen Huan, Department of Mathematics,
Sun Yat-sen University, Guangzhou, 510275 China} \curraddr{}
\email{huanzhen@mail.sysu.edu.cn}
\thanks{The author was partially supported by NSF grant DMS-1406121.}

%    author two information
%\author{}
%\address{}
%\curraddr{} \email{}
%\thanks{}

%\subjclass[2000]{Primary }
%    The 2010 edition of the Mathematics Subject Classification is
%    now available.  If you are citing a classification from the
%    new scheme, use the following input coding instead.
\subjclass[2010]{Primary 55}

\date{}

\begin{abstract}
Ginzburg, Kapranov and Vasserot conjectured the existence of equivariant elliptic cohomology theories.
In this paper, to give a description of equivariant spectra of the theories, we study an intermediate theory, quasi-elliptic cohomology.
We formulate a new category of orthogonal $G-$spectra and construct explicitly an orthogonal $G-$spectrum of quasi-elliptic cohomology in it. The
idea of the  construction can be applied to a family of equivariant cohomology theories, including Tate K-theory and generalized
Morava E-theories. Moreover, this construction provides a functor from the category
of global spectra to the category of orthogonal $G-$spectra. In addition, from it we obtain some new idea what global homotopy theory is right for
constructing global elliptic cohomology theory.
\end{abstract}

\maketitle %\tableofcontents
\section{Introduction}

An elliptic cohomology theory is an even periodic multiplicative
generalized cohomology theory whose associated formal group is the
formal completion of an elliptic curve. Elliptic cohomology theories serve as a family of algebraic variants reflecting
the geometric nature of elliptic curves, which make themselves intriguing and significant subjects to study.
One renowned conclusion on the representing spectra of the theories is Goerss-Hopkins-Miller theorem \cite{Lurie}.
It constructs many examples of
$E_{\infty}-$rings which represent elliptic cohomology theories,
including Tate K-theory.

Moreover, as K-theory and many other cohomology theories, elliptic cohomology  theories also have equivariant version.
In \cite{GKV},
Ginzburg, Kapranov and Vasserot gave the axiomatic definition of
$G-$equivariant elliptic cohomology theory.
They have the conjecture that any
elliptic curve $A$ gives rise to a unique equivariant elliptic
cohomology theory, natural in $A$.  In his thesis \cite{Gepnerthesis}, Gepner presented a
construction of the equivariant elliptic cohomology that satisfies
a derived version of the Ginzburg-Kapranov-Vasserot axioms.
We have the question from another perspective whether we can construct an orthogonal $G-$spectrum representing each
equivariant elliptic cohomology theory.

This question, however, is not easy to answer by studying elliptic cohomology theories themselves. They are intricate and mysterious theories.
Instead, we turn to an intermediate theory, quasi-elliptic cohomology theory. The idea of quasi-elliptic cohomology is  motivated by Ganter's
construction of Tate K-theory. Rezk established the theory in his unpublished manuscript \cite{Rez11}. The author gave a detailed description
of the construction of the theory in Chapter 2, \cite{Huanthesis} and Section 2, 3, \cite{huanqp}. Currently the author is writing a survey on quasi-elliptic cohomology\cite{Huansurvey}.

Quasi-elliptic cohomology theory is a variant of Tate K-theory, which is the generalized elliptic cohomology theory associated to
the Tate curve. The Tate curve $Tate(q)$ is an  elliptic curve
over Spec$\mathbb{Z}((q))$, which is classified as the completion
of the algebraic stack of some nice generalized elliptic curves at
infinity. A good reference for $Tate(q)$ is Section 2.6 of
\cite{AHS}. Tate K-theory itself is a distinctive subject to study. The
relation between Tate K-theory and string theory is better
understood than for most known elliptic cohomology theories. In addition, the
definition of $G-$equivariant Tate K-theory for finite groups $G$
is modelled on the loop space of a global quotient orbifold, which
is formulated explicitly in Section 2, \cite{Gan07}.

Quasi-elliptic cohomology theory contains all the information of Tate K-theory and reflects the geometric nature of elliptic curves. Moreover, it has many advantages \cite{huanqp}\cite{Huanthesis}.
One large good feature that you can tell is it can be expressed explicitly by equivariant K-theories.
\begin{equation}QEll^*_G(X):=\prod_{\sigma\in
G^{tors}_{conj}}K^*_{\Lambda(\sigma)}(X^{\sigma})=\bigg(\prod_{\sigma\in
G^{tors}}K^*_{\Lambda(\sigma)}(X^{\sigma})\bigg)^G.\label{defintro}\end{equation}
Equivariant K-theory is a classical example of equivariant cohomology theories. It has been thoroughly studied and has many good features.
Comparing with any elliptic cohomology theory, it is more practicable to construct the representing spectra of quasi-elliptic cohomology theory.

Then, how practicable is it? One immediate idea is that if we can construct a right adjoint functor $r_{\sigma}$ of each fixed point functor $X\mapsto X^{\sigma}$,
then a representing spectra $\{X_n, \psi_n\}_n$ of the theory $QEll^*_G(-)$ can be constructed by \begin{equation}X_n=\prod_{\sigma\in
G^{tors}_{conj}}r_{\sigma}(KU_{\Lambda_G(\sigma), n})\mbox{,   } \mbox{   } \psi_n=\prod_{\sigma\in
G^{tors}_{conj}}r_{\sigma}(\phi_n)\label{imagine1}\end{equation} where $\{KU_{G, n}, \phi_n\}_n$ denotes a $G-$spectrum representing $K_G^*(-)$.
However, the fixed point functor does not  have right adjoint.
Consequently, we introduce the concept of homotopical adjunction. Via homotopical right adjoints of fixed point functors, the representing spectrum
that we obtain stays in a new category of orthogonal $G-$spectra $GwS$. %The construction is explicit and can be generalized to a family of theories.

We are still studying whether this way of constructing  the orthogonal $G-$spectrum
 of quasi-elliptic cohomology can be applied to the elliptic cohomology theories
  and whether the category $GwS$ is the right category for equivariant elliptic spectra to reside at. But
this idea can be applied to a family of theories, including generalized
Morava E-theories and equivariant Tate K-theory. We can
construct in the category $GwS$ the orthogonal $G-$spectrum  of any theory of the form
\begin{equation}QE^*_G(X):=\prod_{\sigma\in
G^{tors}_{conj}}E^*_{\Lambda(\sigma)}(X^{\sigma})=\bigg(\prod_{\sigma\in
G^{tors}}E^*_{\Lambda(\sigma)}(X^{\sigma})\bigg)^G\label{defintroE}\end{equation}
with  $E$  any equivariant cohomology theory having the same key
features as equivariant K-theory, as explained in detail at the beginning of Section \ref{GorthospectraQEll}.

As equivariant K-theories, quasi-elliptic cohomology also has the
change-of-group isomorphism. In a conversation, Ganter indicated
that it has better chances than Grojnowski equivariant elliptic
cohomology theory to be put together naturally in a uniform way
and made into an ultra-commutative global cohomology theory in the
sense of Schwede \cite{SS}.

However, this orthogonal $G-$spectrum of quasi-elliptic cohomology cannot arise from an
orthogonal spectrum, i.e. this orthogonal $G-$spectrum is not the
underlying orthogonal $G-$spectrum of any orthogonal spectrum.
Instead, in  a coming paper we construct a new global homotopy
theory and  show there is a global orthogonal spectrum  in it that represents orthogonal
quasi-elliptic cohomology. Some construction and idea of this new theory has already been presented in Chapter 6 and 7, \cite{Huanthesis}.

It is worth mentioning that, other than Schwede's model for global
homotopy theory, there is a presheaf model for the theory shown in
\cite{Gepnerorbi}.  In \cite{Rez14} Rezk briefly introduced this
definition with differences in detail and he highlighted the role
of "cohesion" in relating ordinary equivariant homotopy theory
with global equivariant homotopy theory. This may be a better
model to construct global elliptic cohomology theories though the author
has not worked into it deeply.

\subsection{Where should we construct the equivariant spectrum?}

As indicated above, the equivariant spectrum of quasi-elliptic cohomology cannot be constructed as in (\ref{imagine1}) because the right
adjoint functor $r_{\sigma}$ does not exist.   % the fixed point functor does not have a right adjoint functor in the category of $G-$spaces
We generalize the concept of  right adjoints a little and introduce homotopical adjunction.

\begin{definition}[homotopical adjunction] Let $H$ and $G$ be two compact Lie groups.
%Let $C$ and $D$ be two categories with weak equivalences.
Let \begin{equation} L: G\mathcal{T} \longrightarrow H\mathcal{T}
\mbox{ and } R: H\mathcal{T} \longrightarrow
G\mathcal{T}\end{equation} be two functors. A
\textit{left-to-right homotopical adjunction} is a natural map
\begin{equation}\mbox{Map}_H(LX,Y)\longrightarrow
\mbox{Map}_G(X,RY),\end{equation} which is a weak equivalence of
spaces when $X$ is a $G-$CW complex.

Analogously, a \textit{right-to-left homotopical adjunction} is a
natural map
\begin{equation}
\mbox{Map}_G(X,RY)\longrightarrow \mbox{Map}_H(LX,Y)\end{equation}
which is a weak equivalence of spaces when $X$ is a $G-$CW
complex.

$L$ is called a \textit{homotopical left adjoint} and $R$ a
\textit{homotopical right adjoint}. \label{holrintro}
\end{definition}

The homotopical right adjoint $\mathcal{R}_{\sigma}$ of the fixed point functor $X\mapsto X^{\sigma}$ exists. We give an explicit construction
of it in Theorem \ref{KUM}. Via these $\mathcal{R}_{\sigma}$s, we construct a $G-$space $QE_{G, n}$. Its relation with the theory $QE_{G}^n(-)$
is \begin{equation}\pi_0(QE_{G, n})=QE^n_G(S^0),\label{weaksenspintro}\end{equation} as shown in Theorem \ref{Emain}. This construction motivates us to construct
the category $GwT$ of $G$-spaces.  It is defined to be the homotopy category of the category of $G-$spaces with the weak equivalence defined by
\begin{equation}A\sim B\mbox{  if   }\pi_0(A)=\pi_0(B).\label{weaksenSintro}\end{equation}

Moreover, we can define the category $GwS$ of orthogonal $G-$spectra, which is the homotopy category of the category of orthogonal $G-$spectra with the weak equivalence defined by
\begin{equation}X\sim Y\mbox{  if   }\pi_0(X(V))=\pi_0(Y(V)),\label{weaksensp12}\end{equation} for each   faithful  $G-$representation  $V$.
And an orthogonal $G-$spectrum $X$ is said to represent a theory $H^*_G$ in $GwS$ if we have a natural map \begin{equation}\pi_0(X(V))=H^V_G(S^0),\label{weaksenspV}\end{equation} for each   faithful  $G-$representation  $V$.

\subsection{Orthogonal $G-$spectra}

To construct an orthogonal $G-$spectrum strictly representing $QE^*_G$ is far
beyond our imagination. But the construction of an orthogonal $G-$spectrum
representing it in $GwS$ is down-to-earth.

The readers may have noticed that we cannot construct the structure maps in the same way as  (\ref{imagine1}) to equip
the spaces $\{QE_{G, n}\}_n$ the structure of a $G-$spectrum because we are using homotopical right adjoints.
We have the same problem when trying to construct an orthogonal $G-$spectrum representing $QE^*_G$ in the category $GwS$.

But when $E_G$ is a $\mathcal{I}_G-$FSP,  we can construct the
structure maps for $QE(G, -)$ explicitly and obtain the main conclusion of
this paper.
\begin{theorem}
%Let $(E_G, \eta^{E}, \mu^{E})$ be a $\mathcal{I}_G-$FSP representing the equivariant cohomology theory $E_G$.
If the equivariant cohomology theory $E^*_G$ can be represented by
a $\mathcal{I}_G-$FSP $(E_G, \eta^{E}, \mu^{E})$,  there is a
$\mathcal{I}_G-$FSP
$$(QE(G, -), \eta^{QE}, \mu^{QE})$$ representing
$QE^*_G$  in $GwS$. In the case that $(E_G, \eta^{E}, \mu^{E})$ is
commutative, $(QE(G, -), \eta^{QE}, \mu^{QE})$ is commutative.
\label{mainmain}
\end{theorem}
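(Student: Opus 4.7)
The plan is to define the $\mathcal{I}_G$-functor $QE(G,-)$ factor-by-factor using the homotopical right adjoints $\mathcal{R}_\sigma$ of the fixed-point functors $(-)^\sigma$ constructed in Theorem \ref{KUM}, generalizing the construction of the spaces $QE_{G,n}$ to the representation-indexed setting by transporting the $\mathcal{I}_{\Lambda(\sigma)}$-FSP structure of $E_{\Lambda(\sigma)}$ through $\mathcal{R}_\sigma$. Explicitly, I would set
\[
QE(G, V) \;:=\; \prod_{\sigma \in G^{tors}_{conj}} \mathcal{R}_\sigma\bigl(E_{\Lambda(\sigma)}(V^\sigma)\bigr),
\]
as a pointed $G$-space. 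Functoriality in $V \in \mathcal{I}_G$ is inherited from the $\mathcal{I}_G \to \mathcal{I}_{\Lambda(\sigma)}$-functor $V \mapsto V^\sigma$, the $\mathcal{I}_{\Lambda(\sigma)}$-functoriality of $E_{\Lambda(\sigma)}$, and the functoriality of each $\mathcal{R}_\sigma$.

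For the unit $\eta^{QE}_V : S^V \to QE(G, V)$, for each conjugacy class of $\sigma$ I would view $\eta^E_{V^\sigma} : S^{V^\sigma} \to E_{\Lambda(\sigma)}(V^\sigma)$ as a $\Lambda(\sigma)$-map out of $(S^V)^\sigma = S^{V^\sigma}$ and apply the natural map of Definition \ref{holrintro} to obtain a genuine $G$-map $S^V \to \mathcal{R}_\sigma(E_{\Lambda(\sigma)}(V^\sigma))$; taking the product over $\sigma$ assembles these into $\eta^{QE}_V$. For the multiplication $\mu^{QE}_{V,W}$, I would first use coordinate projections to produce a canonical $G$-map
\[
QE(G, V) \wedge QE(G, W) \longrightarrow \prod_{\sigma} \bigl(\mathcal{R}_\sigma(E_{\Lambda(\sigma)}(V^\sigma)) \wedge \mathcal{R}_\sigma(E_{\Lambda(\sigma)}(W^\sigma))\bigr),
\]
and then post-compose factor-by-factor with a lax monoidal comparison
\[
\mathcal{R}_\sigma(A) \wedge \mathcal{R}_\sigma(B) \longrightarrow \mathcal{R}_\sigma(A \wedge B)
\]
built from the explicit model of $\mathcal{R}_\sigma$ supplied by Theorem \ref{KUM}, followed by $\mathcal{R}_\sigma(\mu^E_{V^\sigma,W^\sigma})$ and the canonical $\Lambda(\sigma)$-isomorphism $V^\sigma \oplus W^\sigma \cong (V \oplus W)^\sigma$.

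Representability in $GwS$ then follows from criterion (\ref{weaksenspV}): since $\pi_0$ commutes with products and Theorem \ref{Emain} together with the defining property of $\mathcal{R}_\sigma$ identifies $\pi_0(\mathcal{R}_\sigma(E_{\Lambda(\sigma)}(V^\sigma))) = E^{V^\sigma}_{\Lambda(\sigma)}(S^0)$, the group $\pi_0(QE(G, V))$ reproduces the defining formula (\ref{defintroE}) indexed by $V$. Commutativity of the resulting FSP, when $\mu^E$ is commutative, is automatic because the coordinate projections respect the symmetry of the smash product and each factor inherits a commutative multiplication from $\mu^E$. The hard part I expect is the verification that the associativity and unit axioms for $(QE(G,-),\eta^{QE},\mu^{QE})$ hold \emph{strictly} rather than merely up to homotopy: because $\mathcal{R}_\sigma$ is only a homotopical right adjoint, the lax monoidal comparison $\mathcal{R}_\sigma(A) \wedge \mathcal{R}_\sigma(B) \to \mathcal{R}_\sigma(A \wedge B)$ cannot be obtained from any abstract universal property, so it must be constructed at the point-set level from the concrete model of Theorem \ref{KUM} and its coherence with $\mu^E$ must be inspected by hand so that the FSP pentagons descend from those of $(E_G,\eta^E,\mu^E)$.
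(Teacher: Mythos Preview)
Your outline captures the right architecture, but there are two concrete obstructions that force the paper to take a substantially different point-set route, and your proposal does not yet see them.

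First, the space $E_{\Lambda(\sigma)}(V^\sigma)$ does not represent $E^{V^\sigma}_{\Lambda(\sigma)}(-)$. The representation $V^\sigma$ carries the \emph{trivial} $\mathbb{R}$-action, so it is never a faithful $\Lambda_G(\sigma)$-representation, and the hypothesis on $E$ only guarantees that $E(W)$ represents $E^W$-cohomology when $W$ is faithful. The paper fixes this by manufacturing, for each faithful $G$-representation $V$, an auxiliary faithful real $\Lambda_G(\sigma)$-representation $(V)^{\mathbb{R}}_\sigma$ (Appendix~\ref{reallambda}) and replacing your $E_{\Lambda(\sigma)}(V^\sigma)$ by the loop-type space
\[
F_\sigma(G,V)=\mathrm{Map}_{\mathbb{R}}\bigl(S^{(V)^{\mathbb{R}}_\sigma},\,E((V)^{\mathbb{R}}_\sigma\oplus V^\sigma)\bigr),
\]
which does represent the correct functor. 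Without this step your $\pi_0$ identification, and hence the representability claim, fails.

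Second, the model of $\mathcal{R}_\sigma$ from Theorem~\ref{KUM} uses the \emph{fixed} space $S_{C_G(\sigma),\sigma}$, independent of $V$. There is no evident map $S_{C_G(\sigma),\sigma}\wedge S_{C_G(\sigma),\sigma}\to S_{C_G(\sigma),\sigma}$ (or any analogue compatible with direct sums of representations), so the lax monoidal comparison $\mathcal{R}_\sigma(A)\wedge\mathcal{R}_\sigma(B)\to\mathcal{R}_\sigma(A\wedge B)$ you posit cannot be built from that model. The paper therefore replaces $S_{C_G(\sigma),\sigma}$ by the $V$-dependent linear model $S(G,V)_\sigma=\mathrm{Sym}(V)\setminus\mathrm{Sym}(V)^\sigma$, for which the addition of vectors gives a genuine map $S(G,V)_g\times S(H,W)_h\to S(G\times H,V\oplus W)_{(g,h)}$. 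Even then the join $F_\sigma(G,V)\ast S(G,V)_\sigma$ does not admit a strict multiplication; the paper passes to a further $C_G(\sigma)$-weakly equivalent quotient $QE_\sigma(G,V)$ of a sub-join (Proposition~\ref{QEinal}) and writes down $\eta^{QE}$ and $\mu^{QE}$ by explicit piecewise formulas in the join coordinates (see (\ref{finaleta}) and (\ref{muEg})), verifying the FSP diagrams by hand in Lemma~\ref{Gorthocommdiaglemma}. Note also that the multiplication is \emph{external}, landing in $QE_{(g,h)}(G\times H,V\oplus W)$ and then pulled back along the diagonal $\Delta_G$; your single-$\sigma$ comparison $\mathcal{R}_\sigma(A)\wedge\mathcal{R}_\sigma(B)\to\mathcal{R}_\sigma(A\wedge B)$ is the wrong shape for this. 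Your final paragraph correctly anticipates that this is where the work lies, but the specific model you propose to start from does not support the construction.
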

The construction of $(QE(G, -), \eta^{QE}, \mu^{QE})$ gives us a functor.

\begin{theorem}
There is a well-defined functor $\mathcal{Q}$ from the full subcategory consisting of
$\mathcal{I}_G-$FSP in $GwS$ to the same category sending $(E_G, \eta^E, \mu^E)$  to
$(QE(G, -), \eta^{QE}, \mu^{QE})$.
%that represents the cohomology theory $QE$ in $GwS$.

The restriction of $\mathcal{Q}$ to the full subcategory consisting of
commutative $\mathcal{I}_G-$FSP  is a functor from that category  to itself.
 \end{theorem}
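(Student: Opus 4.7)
Since Theorem~\ref{mainmain} already supplies the object assignment $(E_G,\eta^E,\mu^E)\mapsto(QE(G,-),\eta^{QE},\mu^{QE})$, the work is to define $\mathcal{Q}$ on morphisms of $\mathcal{I}_G$-FSPs, verify strict functoriality, check that both the morphism assignment and the equivalence relation of $GwS$ (pointwise $\pi_0$, as in (\ref{weaksensp12})) are respected, and finally handle commutativity. The strategy is to piggyback on the functoriality of the homotopical right adjoints $\mathcal{R}_\sigma$ of Theorem~\ref{KUM} and of products.

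Given a morphism $f\colon(E_G,\eta^E,\mu^E)\to(E'_G,\eta^{E'},\mu^{E'})$, the natural candidate is
\[
\mathcal{Q}(f)=\prod_{\sigma\in G^{tors}_{conj}}\mathcal{R}_\sigma(f_{\Lambda(\sigma)}),
\]
where $f_{\Lambda(\sigma)}$ denotes the evident $\Lambda(\sigma)$-equivariant map of fixed-point theories induced by $f$. Because $\mathcal{R}_\sigma$ is an honest functor (its action on morphisms comes from postcomposition in the mapping-space formulation of Definition~\ref{holrintro}) and products and restrictions are strictly functorial, $\mathcal{Q}$ automatically preserves identities and composition. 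The first real check is that $\mathcal{Q}(f)$ intertwines $\eta^{QE}$ and $\mu^{QE}$: the unit diagram is immediate from naturality of $\eta^E$ and of $\mathcal{R}_\sigma$, while the multiplicative diagram requires that the comparison maps used in Theorem~\ref{mainmain} to build $\mu^{QE}$ (in particular the pairing of $\sigma$- and $\tau$-indexed factors landing in the $\sigma\tau$-factor, together with the canonical maps relating $\mathcal{R}_\sigma(-)\wedge\mathcal{R}_\tau(-)$ and $\mathcal{R}_{\sigma\tau}(-\wedge-)$) are themselves natural in morphisms of $\mathcal{I}_G$-FSPs. This naturality verification is the main technical obstacle; it reduces to a diagram chase using that each comparison map was built by functorial operations on fixed-point spaces.

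For descent to $GwS$, note that $\mathcal{R}_\sigma$ is defined through a weak equivalence of mapping spaces on $G$-CW input, so it preserves $\pi_0$ of $G$-fixed mapping spaces; consequently a levelwise $\pi_0$-equivalence of FSPs is sent to a levelwise $\pi_0$-equivalence of the corresponding $QE$-constructions, so $\mathcal{Q}$ descends to $GwS$. For the last assertion, if $\mu^E$ is commutative then $\mu^{QE}$ is the image of $\mu^E$ under the symmetric operations $\prod_{\sigma}$ and $\mathcal{R}_\sigma$; the symmetry isomorphism therefore commutes with $\mathcal{Q}(f)$ by the same naturality invoked above, so $\mathcal{Q}$ sends commutative $\mathcal{I}_G$-FSPs to commutative $\mathcal{I}_G$-FSPs, yielding the desired restriction.
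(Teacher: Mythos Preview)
The paper gives no separate proof of this statement; it is recorded as an immediate consequence of the explicit constructions in Section~\ref{anotherweaksp} and Section~\ref{Gorthospstructure}, all of which are manifestly natural in the input FSP $E$. Your overall strategy---functoriality follows from naturality of every step---is therefore the right one, and matches what the paper implicitly does.

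That said, your implementation is imprecise in a way that matters. You propose $\mathcal{Q}(f)=\prod_{\sigma}\mathcal{R}_\sigma(f_{\Lambda(\sigma)})$ using the homotopical right adjoint $\mathcal{R}_\sigma$ of Theorem~\ref{KUM}. But the $\mathcal{I}_G$-FSP $(QE(G,-),\eta^{QE},\mu^{QE})$ is \emph{not} built from $\mathcal{R}_\sigma$ directly: as Proposition~\ref{QEinal} explains, the spaces $QE(G,V)=\prod_g\mbox{Map}_{C_G(g)}(G,QE_g(G,V))$ use the modified quotient-of-join construction $QE_g(G,V)$, precisely because the abstract $\mathcal{R}_\sigma$ does not visibly support a multiplication. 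The correct morphism assignment is: $f$ induces $F_g(G,V)\to F'_g(G,V)$ by postcomposition with $f$ on $E((V)^{\mathbb{R}}_g\oplus V^g)$, hence $QE_g(G,V)\to QE'_g(G,V)$ (the join/quotient is functorial and fixes the $S(G,V)_g$ coordinate), hence $QE(G,V)\to QE'(G,V)$. Compatibility with $\eta^{QE}$ and $\mu^{QE}$ is then read off directly from formulas (\ref{finaleta}) and (\ref{muEg}), which depend on $E$ only through $\eta^E$ and $\mu^E$.

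Your description of the multiplication as pairing a ``$\sigma$-factor'' and a ``$\tau$-factor'' into a ``$\sigma\tau$-factor'' is also off: $\mu^{QE}((G,V),(H,W))$ pairs the $g$-factor (for $g\in G^{tors}$) with the $h$-factor (for $h\in H^{tors}$) into the $(g,h)$-factor of $QE(G\times H,V\oplus W)$, cf.\ the formula following (\ref{muEg}); there is no product $\sigma\tau$ in a single group. Once you correct the model and trace naturality through the explicit formulas rather than through abstract comparison maps between $\mathcal{R}_\sigma(-)\wedge\mathcal{R}_\tau(-)$ and $\mathcal{R}_{\sigma\tau}(-\wedge-)$ (which the paper never constructs), the argument goes through.
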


Moreover, we have the corollary for quasi-elliptic cohomology.

\begin{theorem}
There  is a commutative $\mathcal{I}_G-$FSP $(QK(G, -), \eta^{QK},
\mu^{QK})$ representing quasi-elliptic cohomology in $GwS$.
\label{mainmaincor}
\end{theorem}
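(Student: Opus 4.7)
The plan is to deduce this directly from Theorem~\ref{mainmain} by specializing the general construction to equivariant K-theory. Comparing the defining formula~(\ref{defintro}) for quasi-elliptic cohomology with the general formula~(\ref{defintroE}), one sees that $QEll^*_G(X) = QE^*_G(X)$ when $E^*_G = K^*_G$, so once Theorem~\ref{mainmain} is invoked the desired object will simply be $(QK(G,-), \eta^{QK}, \mu^{QK}) := (QE(G,-), \eta^{QE}, \mu^{QE})$ for the choice $E_G = KU_G$.

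The first, and essentially only nontrivial, step is to exhibit a commutative $\mathcal{I}_G$-FSP $(KU_G, \eta^{KU}, \mu^{KU})$ that represents equivariant K-theory and satisfies the list of key features invoked at the beginning of Section~\ref{GorthospectraQEll}. This is classical: on a finite-dimensional $G$-representation $V$ one takes $KU_G(V)$ to be the $V$th space of a standard orthogonal $G$-ring spectrum model of equivariant K-theory (for instance, the space of equivariant Fredholm operators on a complete $G$-universe twisted by $V$, or Joachim's $C^*$-algebra model). The unit $\eta^{KU}$ is induced from the trivial line bundle and the multiplication $\mu^{KU}$ from the external tensor product of equivariant vector bundles; symmetry of the tensor product gives strict commutativity of $\mu^{KU}$.

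With this input in place, Theorem~\ref{mainmain} applied to $(KU_G, \eta^{KU}, \mu^{KU})$ immediately produces an $\mathcal{I}_G$-FSP $(QK(G,-), \eta^{QK}, \mu^{QK})$ representing $QE^*_G = QEll^*_G$ in $GwS$, and its second clause upgrades this to a commutative $\mathcal{I}_G$-FSP precisely because the input $(KU_G, \eta^{KU}, \mu^{KU})$ is commutative. This is the claimed statement.

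The main obstacle is purely bookkeeping on the K-theory side: one must fix a particular model of equivariant K-theory whose $\mathcal{I}_G$-FSP structure is strict enough, rather than only coherent up to homotopy, to feed cleanly into the functor $\mathcal{Q}$. All of this is available in the literature, so no new obstruction arises beyond what has already been handled in the proof of Theorem~\ref{mainmain}.
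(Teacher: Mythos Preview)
Your proposal is correct and follows essentially the same route as the paper: Theorem~\ref{mainmaincor} is obtained by specializing the general construction of Theorem~\ref{mainmain} (carried out in Section~\ref{GorthospectraQEll}, culminating in Theorem~\ref{GorthoQEll} and Corollary~\ref{uptokweak}) to equivariant complex K-theory. The paper is simply more specific about the input model: it uses Schwede's global $KU$ from Appendix~\ref{globalK}, where the required key features (representability on faithful representations and the $G$-weak equivalence of adjoint structure maps) are recorded as Theorems~\ref{globalKtheorem} and~\ref{KUGweak}, so no separate verification of the hypotheses is needed.
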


In addition, we construct the restriction maps $QE(G,
V)\longrightarrow QE(H, V)$ for each group homomorphism
$H\longrightarrow G$. This map is not a homeomorphism, but an
$H-$weak equivalence.

As shown in Section \ref{newglobal}, the  orthogonal $G-$spectrum
$(QE(G, -), \eta^{QE}, \mu^{QE})$ cannot arise from an orthogonal
spectrum. This fact motivates us to construct a new global
homotopy theory in a coming paper \cite{Huanglobal}.

\bigskip
In Section \ref{bdg}  we recall the basics in equivariant homotopy
theory. In Section \ref{conqec} we recall the construction of
quasi-elliptic cohomology.  In Section \ref{newcat} we introduce homotopical adjunction and construct the category $GwS$
of orthogonal $G-$spaces. In Section \ref{basicweakconstr}, we construct a homotopical right adjoint of the fixed point functor
and then
show the construction of a space $E_{G, n}$ representing $E^n_G(-)$ in $GwT$. In Section
\ref{GorthospectraQEll} we construct an orthogonal $G-$spectrum
for quasi-elliptic cohomology, which is a commutative $\mathcal{I}_G-$FSP in $GwS$.
In Section \ref{gaction}, we define the restriction map of these equivariant orthogonal spectra.
In Section \ref{newglobal} we give a brief introduciton of ideas related to global homotopy theory.

In Appendix \ref{join}, we recall the definition and properties of join.
In Appendix \ref{recalortho} we recall the basics of
global homotopy theory and the construction of global K-theory.  In
Appendix \ref{reallambda} we construct some faithful
representations of the group $\Lambda_G(g)$ which are essential in
the construction of orthogonal $G-$spectrum. And we put the technical proofs of some conclusions in
Appendix \ref{tediousproof}.
\bigskip

\textit{Acknowledgement. } I thank Charles Rezk for suggesting the
idea of homotopical adjunction and weak spectra. He initiated the
project and supported my work all the time.  I also thank Matthew
Ando, Stefan Schwede, Nathaniel Stapleton and Guozhen Wang for
helpful remarks.

\section{Notations in equivariant homotopy theory}\label{bdg}

In this section  we give a sketch of the notations and conclusions
in the equivariant homotopy theory that we need in further
sections. The main references are \cite{Bredon},
\cite{Elmendorfthesis} and \cite{Alaska}.

Let $G$ be a compact Lie group. Let $\mathcal{T}$ denote the
category of topological spaces and continuous maps. Let
$G\mathcal{T}$ denote the category of $G-$spaces, namely, spaces
$X$ equipped with continuous $G-$action $G\times X\longrightarrow
X$ and continuous $G-$maps.

Let $H$ be a closed subgroup of $G$. Let $X$ be a $G-$space and
$Y$ an $H-$space. Define
\begin{equation}X^H:=\{x|hx=x,  \forall h\in H\}.\end{equation}
For $x\in X$, the isotropy group of $x$
\begin{equation}G_x : =\{h|hx=x\}.\end{equation}

Let $hG\mathcal{T}$ denote the homotopy category whose objects are
$G-$spaces and morphisms are $G-$homotopy classes of continuous
$G-$maps. Let $\overline{h}G\mathcal{T}$ denote the category
constructed from $hG\mathcal{T}$ by adjoining formal inverses to
the weak equivalences.

\begin{theorem}[Elemendorf's Theorem]

The category $\overline{h}\mathcal{T}^{\mathcal{O}_G^{op}}$ and
$\overline{h}G\mathcal{T} $ are
equivalent.\label{elem}\end{theorem}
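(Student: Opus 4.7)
The plan is to exhibit a pair of functors between $G\mathcal{T}$ and $\mathcal{T}^{\mathcal{O}_G^{op}}$, show they descend to the respective homotopy categories, and verify that the induced unit and counit are natural weak equivalences. First I would define the \emph{fixed-point diagram} functor $\Phi : G\mathcal{T} \longrightarrow \mathcal{T}^{\mathcal{O}_G^{op}}$ by
$$\Phi(X)(G/H) \;=\; X^H \;\cong\; \mathrm{Map}_G(G/H, X),$$
with transition maps induced by precomposition with $G$-maps $G/H \to G/K$. In the reverse direction, for a presheaf $T : \mathcal{O}_G^{op} \to \mathcal{T}$, I would define $\Psi(T)$ by the coend
$$\Psi(T) \;=\; \int^{G/H \,\in\, \mathcal{O}_G} T(G/H) \times G/H,$$
preferably modelled by the two-sided bar construction $B(G/-,\,\mathcal{O}_G,\,T)$ so that weak equivalences are visibly preserved, equipped with the diagonal $G$-action coming from the right-hand factor.

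Next I would check that $\Phi$ and $\Psi$ descend to the homotopy categories $\overline{h}G\mathcal{T}$ and $\overline{h}\mathcal{T}^{\mathcal{O}_G^{op}}$. The functor $\Phi$ preserves weak equivalences between $G$-CW complexes because taking $H$-fixed points of a $G$-CW complex yields a CW complex and respects the relevant homotopy groups; $\Psi$, realised as a bar construction, preserves objectwise weak equivalences of diagrams. The content of the theorem then concentrates in constructing a unit $\eta : \mathrm{id} \Rightarrow \Phi \Psi$ and counit $\epsilon : \Psi \Phi \Rightarrow \mathrm{id}$ and proving each component is a weak equivalence. The counit $\epsilon_X$ is the natural $G$-map induced by the action $X^H \times G/H \to X,\; (x, gH) \mapsto gx$; the unit $\eta_T$ is determined by how an element of $T(G/K)$ produces, via the inclusion $T(G/K) \times G/K \hookrightarrow \Psi(T)$, a $G$-map out of $G/K$.

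The main obstacle, and the heart of the proof, is the fixed-point calculation for the bar construction: one must verify the identification
$$\Psi(T)^K \;\simeq\; T(G/K)$$
for every closed subgroup $K \leq G$. This rests on the description of $K$-fixed points of a homogeneous space, $(G/H)^K \cong \coprod_{[\alpha]} G/H$ indexed by $\mathcal{O}_G(G/K, G/H)/\text{Weyl}$, together with a cofinality argument showing that the associated simplicial space in the bar complex is homotopy equivalent to $T(G/K)$. For the representable case $T = \mathcal{O}_G(-, G/L)$, the Yoneda lemma together with $\Psi(\mathcal{O}_G(-, G/L)) \simeq G/L$ makes both $\eta$ and $\epsilon$ manifestly weak equivalences; the general case then follows by expressing a projectively cofibrant diagram $T$ as a homotopy colimit of representables and using that $\Psi$ and $\Phi$ commute with such homotopy colimits up to weak equivalence. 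Once the fixed-point identification is established, the two triangle identities hold up to $G$-homotopy and the equivalence of $\overline{h}G\mathcal{T}$ with $\overline{h}\mathcal{T}^{\mathcal{O}_G^{op}}$ follows formally.
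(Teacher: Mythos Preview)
The paper does not actually prove this statement: Elmendorf's theorem is merely quoted as background, with references to \cite{Bredon}, \cite{Elmendorfthesis}, and \cite{Alaska}, and no argument is supplied. So there is nothing in the paper to compare your proposal against.

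That said, your sketch is the standard proof, essentially Elmendorf's original argument recast in the language of bar constructions and homotopy colimits. The functor $\Phi$ (fixed points) and its homotopy inverse $\Psi$ realised as $B(G/-, \mathcal{O}_G, T)$ are exactly the ingredients used in the references the paper cites, and the identification $\Psi(T)^K \simeq T(G/K)$ is indeed the crux. One small imprecision: your description of $(G/H)^K$ as a coproduct ``indexed by $\mathcal{O}_G(G/K, G/H)/\text{Weyl}$'' is not quite right as written---the $K$-fixed points of $G/H$ are in bijection with $G$-maps $G/K \to G/H$, i.e.\ with $\mathcal{O}_G(G/K, G/H)$ itself, not a quotient by a Weyl group---but this does not affect the structure of the argument, since what you need is precisely that $(G/-)^K \cong \mathcal{O}_G(G/K, G/-)$ as functors on $\mathcal{O}_G$, which feeds directly into the bar-construction calculation. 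With that correction your outline is sound and matches the literature the paper defers to.
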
 Let $G\mathcal{C}$ denote the
category of $G-$CW complexes and celluar maps.
Proposition \ref{keyl} is a conclusion needed for the construction later. %It's quite intuitive for algebraic topologists
It can be proved by induction over cells.
\begin{proposition}
Let $D$ be a complete category. Let $i:
\mathcal{O}^{op}_G\longrightarrow G\mathcal{C}^{op}$ be the
inclusion of subcategory. If $F_1, F_2:
G\mathcal{C}^{op}\longrightarrow D$ are two functors sending
homotopy colimits to homotopy limits and if we have a natural
transformation $p: F_1\longrightarrow F_2$, which gives a weak
equivalence at orbits, then it also gives a weak equivalence on
$G\mathcal{C}$. Especially, if $p$ gives a retract at each orbit,
$F_1$ is a retract of $F_2$ at each $G-$CW complexes. \label{keyl}
\end{proposition}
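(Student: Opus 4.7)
The plan is to induct on the cell structure of the $G$-CW complex $X$, reducing the statement to the hypothesis on orbits. Every $G$-CW complex is built by iteratively attaching cells of the form $G/H \times D^n$ along $G/H \times S^{n-1}$, so each skeleton fits into a homotopy pushout
\[
X_n = X_{n-1} \cup_{\coprod_j G/H_j \times S^{n-1}} \coprod_j G/H_j \times D^n.
\]
Since $F_1$ and $F_2$ convert homotopy colimits in $G\mathcal{C}$ into homotopy limits in $D$, applying $F_i$ will yield a homotopy pullback square in $D$, and the natural transformation $p$ will provide a map between such pullback squares.

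For the base case, I would note that the $0$-skeleton is a disjoint union of orbits, so the hypothesis together with the conversion of coproducts into products gives that $p$ is a weak equivalence on $F_i(X_0)$. For the inductive step, assuming $p$ is a weak equivalence on $F_i(Y)$ for every $G$-CW complex $Y$ of dimension less than $n$, I would verify the weak equivalence at each corner of the induced pullback square: on $F_i(X_{n-1})$ and on $F_i(\coprod_j G/H_j \times S^{n-1})$ by the inductive hypothesis (the attaching sphere data forms an $(n-1)$-dimensional $G$-CW complex), and on $F_i(\coprod_j G/H_j \times D^n)$ via the $G$-deformation retract onto the orbit $\coprod_j G/H_j$, where the hypothesis applies directly. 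Since $p$ is then a weak equivalence at every corner of a homotopy pullback square, it must be a weak equivalence on $F_i(X_n)$. To handle an infinite-dimensional $X$, I would express $X$ as $\mathrm{hocolim}_n X_n$ and appeal to the fact that $F_i$ sends this to a homotopy limit of a tower of weak equivalences.

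For the second assertion, the identical inductive scheme applies with the phrase \emph{weak equivalence} replaced by \emph{retract}: homotopy pullbacks and sequential homotopy limits preserve retracts, so a section of $p$ at each orbit assembles compatibly into a section on every skeleton and then on $X$. Naturality of the cell-by-cell construction ensures that the assembled sections on $F_1$ commute with the inclusion into $F_2$.

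The step I expect to be the most delicate is pinning down precisely the sense in which $F_i$ \emph{sends homotopy colimits to homotopy limits}: one needs the pullback squares it produces to be genuine homotopy pullbacks and the attaching maps to be cofibrations, so that a levelwise weak equivalence on three input corners really does descend to a weak equivalence on the homotopy pullback. Once that formal framework is in place, the induction itself is routine.
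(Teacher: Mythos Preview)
Your proposal is correct and matches the paper's approach: the paper does not give a detailed proof but simply states that the proposition ``can be proved by induction over cells,'' which is exactly the scheme you outline. Your write-up in fact supplies more detail than the paper does.
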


\section{Quasi-elliptic cohomology}\label{conqec}

In this section we recall the definition of quasi-elliptic
cohomology. The main reference is \cite{huanqp}, \cite{Huanthesis} and \cite{Rez11}.
%in term of equivariant K-theory.
Before that we discuss in Section \ref{lambdarepresentationlemma}
the representation ring of $\Lambda_G(g)$.
% The main reference for these sections is \cite{Rez11}.

\subsection{Preliminary: representation ring of
$\Lambda_G(g)$}\label{lambdarepresentationlemma}

For any compact Lie group $G$ and a torsion element $g\in G$, let
$C_G(g)$ denote the centralizer of $g$ in $G$, and let
$\Lambda_G(g)$ denote the group
$$\Lambda_G(g)=C_G(g)\times\mathbb{R}/\langle(g, -1)\rangle.$$ Let $\mathbb{T}$ denote the circle group
$\mathbb{R}/\mathbb{Z}$. Let $q: \mathbb{T}\longrightarrow U(1)$
be the isomorphism $t\mapsto e^{2\pi it}$. The representation ring
$R\mathbb{T}$ is $\mathbb{Z}[q^{\pm}]$.

We have an exact sequence
$$1\longrightarrow C_G(g)\longrightarrow
\Lambda_G(g)\buildrel{\pi}\over\longrightarrow\mathbb{T}\longrightarrow
0$$ where the first map is $g\mapsto [g, 0]$ and the second  is
\begin{equation}\pi([g, t])= e^{2\pi it}.\label{pizq}\end{equation}

There is a relation between the representation ring of $C_G(g)$
and that of $\Lambda_G(g)$.
\begin{lemma}

$\pi^*: R\mathbb{T}\longrightarrow R\Lambda_G(g)$ exhibits
$R\Lambda_G(g)$ as a free $R\mathbb{T}-$module.

In particular, there is an $R\mathbb{T}-$basis of $R\Lambda_G(g)$
given by irreducible representations $\{V_{\lambda}\}$, such that
restriction $V_{\lambda}\mapsto V_{\lambda}|_{C_G(g)}$ to $C_G(g)$
defines a bijection between $\{V_{\lambda}\}$ and the set
$\{\lambda\}$ of irreducible representations of
$C_G(g)$.\label{cl}
\end{lemma}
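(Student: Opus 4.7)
The plan is to classify the irreducible representations of $\Lambda_G(g)$ explicitly by using the centrality of the $\mathbb{R}$-factor, and then observe that tensoring with powers of $\pi^*(q)$ organizes these irreducibles into $R\mathbb{T}$-orbits indexed naturally by the irreducible representations of $C_G(g)$.

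First I would record two elementary structural observations. The image of $\mathbb{R}$ in $\Lambda_G(g)$ via $t\mapsto [e,t]$ is central, since $C_G(g)\times\mathbb{R}$ is a direct product and centrality is preserved when quotienting by a central subgroup generated by $(g,-1)$. Also, $g$ is central in $C_G(g)$, so for every irreducible representation $\lambda$ of $C_G(g)$, Schur's lemma gives $\lambda(g)=\zeta_\lambda\cdot I$ for a root of unity $\zeta_\lambda$.

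Second, I would classify the irreducible representations of $\Lambda_G(g)$. An irrep $W$ of $\Lambda_G(g)$ pulls back, via the surjection $C_G(g)\times\mathbb{R}\twoheadrightarrow\Lambda_G(g)$, to an irreducible representation of $C_G(g)\times\mathbb{R}$, which must be of the form $\lambda\boxtimes\chi_\alpha$ with $\chi_\alpha(t)=e^{2\pi i\alpha t}$. The condition that this descends through the relation $(g,-1)\sim e$ is $\lambda(g)\cdot e^{-2\pi i\alpha}=I$, i.e.\ $e^{2\pi i\alpha}=\zeta_\lambda$, which pins down $\alpha$ modulo $\mathbb{Z}$. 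So for each irrep $\lambda$ of $C_G(g)$, choosing a lift $\alpha_\lambda\in[0,1)$ with $e^{2\pi i\alpha_\lambda}=\zeta_\lambda$, the formula
\begin{equation*}
V_\lambda([h,t])=\lambda(h)\cdot e^{2\pi i\alpha_\lambda t}
\end{equation*}
defines an irreducible representation of $\Lambda_G(g)$ whose restriction to $C_G(g)$ is $\lambda$. A short verification using $[hg,t-1]=[h,t]$ confirms well-definedness on the quotient.

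Third, I would observe that $\pi^*(q)([h,t])=e^{2\pi i t}$, hence $V_\lambda\otimes\pi^*(q)^k$ corresponds exactly to replacing the lift $\alpha_\lambda$ by $\alpha_\lambda+k$. Combined with the classification above, every irreducible representation of $\Lambda_G(g)$ is uniquely of the form $V_\lambda\otimes\pi^*(q)^k$ for some irrep $\lambda$ of $C_G(g)$ and some $k\in\mathbb{Z}$. This exhibits $R\Lambda_G(g)=\bigoplus_\lambda R\mathbb{T}\cdot V_\lambda$ as a free $R\mathbb{T}$-module with basis $\{V_\lambda\}$, and the restriction map $V_\lambda\mapsto V_\lambda|_{C_G(g)}=\lambda$ is a bijection by construction. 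There is no single hard obstacle; the main care is in justifying the descent condition $e^{2\pi i\alpha_\lambda}=\zeta_\lambda$ and in keeping track that the ambiguity in lifting $\alpha_\lambda$ to $\mathbb{R}$ is exactly the $R\mathbb{T}$-action by $q^{\pm 1}$.
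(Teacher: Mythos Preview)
Your proposal is correct and follows essentially the same approach as the paper. The paper defers the full proof to \cite{huanqp} and \cite{Huanthesis}, but the Remark immediately following the lemma outlines precisely your construction: for each irreducible $C_G(g)$-representation $\rho$ with $\rho(g)=e^{2\pi i c}\,\mathrm{id}$, $c\in[0,1)$, one sets $V_\rho([h,t])=\rho(h)e^{2\pi i c t}$, which is exactly your $V_\lambda$ with $\alpha_\lambda=c$.
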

The proof is in \cite{huanqp} and also \cite{Huanthesis}.
%Please refer to \cite{huanqp} for the proof of Lemma\ref{cl}.

\begin{remark} We can make a canonical choice of $\mathbb{Z}[q^{\pm}]$-basis
for $R\Lambda_{G}(g)$. For each irreducible $G$-representation
$\rho: G\longrightarrow Aut(G)$, write $\rho(\sigma)=e^{2\pi
ic}id$ for $c\in[0,1)$, and set $\chi_{\rho}(t)=e^{2\pi ict}$.
Then the pair $(\rho, \chi_{\rho})$ corresponds to a unique
irreducible $\Lambda_{G}(g)$-representation
\begin{equation}\rho\odot_{\mathbb{C}} \chi_{\rho}([h, t]):
=\rho(h)\chi_{\rho}(t).\end{equation}
\label{lambdabasis}\end{remark}

\subsection{Quasi-elliptic cohomology}\label{orbqec} In this
section we introduce the definition of quasi-elliptic cohomology
$QEll^*_G$ in term of  equivariant K-theory. This theory can also
be constructed from Rezk's ghost loops defined in \cite{Rez16}. To
see a full discussion about the relation between equivariant loop
spaces and quasi-elliptic cohomology, please refer to Chapter 2 and 3
\cite{huanqp}, Chapter 2 \cite{Huanthesis} and \cite{Rez11}.
%The readers may read Chapter 3 in \cite{ALRuan} and \cite{Moe02} for a reference of orbifold K-theory.

Let $X$ be a $G-$space. Let $G^{tors}\subseteq G$ be the set of
torsion elements of $G$. Let $\sigma\in G^{tors}$. The fixed point
space $X^{\sigma}$ is a $C_G(\sigma)-$space. We can define a
$\Lambda_G(\sigma)-$action on $X^{\sigma}$ by $[g, t]\cdot
x:=g\cdot x.$

\begin{definition} The quasi-elliptic cohomology is defined by
\begin{equation}QEll^*_G(X)=\prod_{g\in
G^{tors}_{conj}}K^*_{\Lambda_G(g)}(X^{g})=\bigg(\prod_{g\in
G^{tors}}K^*_{\Lambda_G(g)}(X^{g})\bigg)^G,\end{equation} where
$G^{tors}_{conj}$ is a set of representatives of $G-$conjugacy
classes in $G^{tors}$.\label{qelldef}
\end{definition}
We have the ring homomorphism
$\mathbb{Z}[q^{\pm}]=K^0_{\mathbb{T}}(\mbox{pt})\buildrel{\pi^*}\over\longrightarrow
K^0_{\Lambda_G(g)}(\mbox{pt})\longrightarrow
K^0_{\Lambda_G(g)}(X)$ where $\pi: \Lambda_G(g)\longrightarrow
\mathbb{T}$ is the projection defined in (\ref{pizq}) and the
second is via the collapsing map $X\longrightarrow \mbox{pt}$. So
$QEll_G^*(X)$ is naturally a
%$\mathbb{Z}[q^{\pm}]-$
$\mathbb{Z}[q^{\pm}]-$algebra. %why

Similar to equivariant K-theories, we can  construct the
restriction map, the K\"{u}nneth map on it, its tensor product and
the change-of-group isomorphism of quasi-elliptic cohomology. We
construct the restriction map and the change-of-group isomorphism
in this section. For other constructions, please refer to
\cite{huanqp}.

Since each homomorphism $\phi: G\longrightarrow H$ induces a
well-defined homomorphism $\phi_{\Lambda}:
\Lambda_G(\tau)\longrightarrow\Lambda_H(\phi(\tau))$ for each
$\tau$ in $G$, we can get the proposition below directly.
\begin{proposition}For each homomorphism $\phi: G\longrightarrow H$, it induces a ring map
$$\phi^*: QEll^*_H(X)\longrightarrow QEll^*_G(\phi^*X)$$ characterized by the commutative diagrams

\begin{equation}\begin{CD}QEll^*_H(X) @>{\phi^*}>> QEll^*_G(\phi^*X) \\ @V{\pi_{\phi(\tau)}}VV  @V{\pi_{\tau}}VV  \\
K^*_{\Lambda_H(\phi(\tau))}(X^{\phi(\tau)}) @>{\phi^*_{\Lambda}}>>
 K^*_{\Lambda_G(\tau)}(X^{\phi(\tau)})\end{CD}\end{equation} for any $\tau \in G$. So $QEll^*_G$ is functorial in $G$.
\label{restrictionq}\end{proposition}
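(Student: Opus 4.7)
The plan is to construct $\phi^*$ componentwise in the product presentation of $QEll^*$, using that $\phi$ induces a compatible homomorphism on the $\Lambda$-groups and an equality of fixed sets, then verify that the pieces glue correctly and respect the ring structure.

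First I would unpack the auxiliary maps. Given $\phi: G\longrightarrow H$ and a torsion element $\tau \in G$, the element $\phi(\tau)$ is torsion in $H$, and $\phi_\Lambda([g,t]):=[\phi(g),t]$ is well defined because the generator $(\tau,-1)$ of the relation in $\Lambda_G(\tau)$ is sent to $(\phi(\tau),-1)$, a generator of the relation in $\Lambda_H(\phi(\tau))$. Next, if $X$ is an $H$-space and $\phi^{*}X$ denotes the underlying $G$-space, then $(\phi^{*}X)^{\tau}=X^{\phi(\tau)}$ as a set, and the $\Lambda_G(\tau)$-action on this space factors through $\phi_\Lambda$ to the $\Lambda_H(\phi(\tau))$-action. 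Restriction along $\phi_\Lambda$ therefore yields a natural ring map
\begin{equation}
\phi^{*}_{\Lambda}: K^{*}_{\Lambda_H(\phi(\tau))}(X^{\phi(\tau)})\longrightarrow K^{*}_{\Lambda_G(\tau)}\bigl((\phi^{*}X)^{\tau}\bigr)
\end{equation}
for every $\tau\in G^{tors}$, which is the desired commuting square at the level of a single factor.

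Next I would assemble these factorwise maps into a map on $QEll^{*}$. Using the invariant presentation, define
\begin{equation}
\phi^{*}:\Bigl(\prod_{h\in H^{tors}}K^{*}_{\Lambda_H(h)}(X^{h})\Bigr)^{H}\longrightarrow \Bigl(\prod_{\tau\in G^{tors}}K^{*}_{\Lambda_G(\tau)}((\phi^{*}X)^{\tau})\Bigr)^{G}
\end{equation}
by sending an $H$-invariant tuple $(a_h)_{h}$ to the tuple whose $\tau$-component is $\phi^{*}_{\Lambda}(a_{\phi(\tau)})$. The $G$-invariance of the output follows from the $H$-invariance of the input together with the identity $\phi(g\tau g^{-1})=\phi(g)\phi(\tau)\phi(g)^{-1}$, which shows that conjugation in $G$ translates to conjugation by $\phi(g)$ in $H$ and that the conjugation isomorphisms on the $\Lambda$-equivariant K-theories match under $\phi^{*}_{\Lambda}$. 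Because each $\phi^{*}_{\Lambda}$ is a ring map and the product and invariants are taken in rings, $\phi^{*}$ is a ring homomorphism. The characterizing commutative squares in the statement are then tautological: $\pi_{\tau}\circ \phi^{*}$ is the $\tau$-component of the output, which by construction equals $\phi^{*}_{\Lambda}\circ \pi_{\phi(\tau)}$.

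Finally I would record functoriality in $G$: if $\psi: F\longrightarrow G$ is another homomorphism, then $(\phi\circ\psi)_{\Lambda}=\phi_{\Lambda}\circ \psi_{\Lambda}$ on each $\Lambda_F(\rho)$, so the characterization via the commuting squares forces $(\phi\circ\psi)^{*}=\psi^{*}\circ\phi^{*}$, and the identity of $G$ clearly induces the identity on $QEll^{*}_G$. The only point that requires a little care, and is the main thing to monitor throughout, is the bookkeeping between the two equivalent presentations of $QEll^{*}$ (product over conjugacy class representatives versus $G$-invariants in the full product): one has to check that the componentwise definition is independent of the choice of representatives, which is exactly the equivariance statement verified above. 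Everything else reduces to the naturality of restriction in equivariant K-theory.
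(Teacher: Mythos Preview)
Your proposal is correct and follows essentially the same approach as the paper. In fact the paper does not give a proof at all: it simply observes, immediately before the proposition, that $\phi$ induces a well-defined homomorphism $\phi_\Lambda:\Lambda_G(\tau)\to\Lambda_H(\phi(\tau))$ and declares that the proposition follows directly; your write-up supplies exactly the details (well-definedness of $\phi_\Lambda$, the identification $(\phi^*X)^\tau=X^{\phi(\tau)}$, the assembly over components and the $G$-invariance check) that the paper leaves implicit.
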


We also have the change-of-group isomorphism as  in equivariant
$K$-theory.

Let $H$ be a subgroup of $G$ and $X$ an $H$-space. Let $\phi:
H\longrightarrow G$ denote the inclusion homomorphism. The
change-of-group map $\rho^G_H: QEll^*_G(G\times_HX)\longrightarrow
QEll^*_H(X)$ is defined as the composite
\begin{equation}\rho^G_H:   QEll^*_G(G\times_HX)\buildrel{\phi^*}\over\longrightarrow
QEll^*_H(G\times_H X)\buildrel{i^*}\over\longrightarrow
QEll_H^*(X)\label{changeofgroup}
\end{equation}
where $\phi^*$ is the restriction map and $i: X\longrightarrow
G\times_HX$ is the $H-$equivariant map defined by $i(x)=[e, x].$

\begin{proposition} The change-of-group map
$$\rho^G_H: QEll^*_G(G\times_H X)\longrightarrow
QEll^*_H(X)$$ defined in (\ref{changeofgroup}) is an
isomorphism.\end{proposition}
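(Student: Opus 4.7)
The plan is to reduce the statement to the classical change-of-group isomorphism for equivariant K-theory, $K^*_G(G \times_H Y) \cong K^*_H(Y)$, by directly analyzing the fixed point loci $(G\times_H X)^g$ and unwinding the product formula in Definition \ref{qelldef}.

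The first step is to describe the $\Lambda_G(g)$-space $(G \times_H X)^g$ concretely. A point $[a, x]$ is fixed by $g$ iff $\tau := a^{-1}ga \in H$ and $x \in X^\tau$. Partitioning by the $H$-conjugacy class of $\tau$, and choosing for each relevant class a representative $\tau$ together with $a_\tau \in G$ satisfying $a_\tau^{-1} g a_\tau = \tau$, one obtains a $\Lambda_G(g)$-equivariant homeomorphism
$$(G\times_H X)^g \;\cong\; \coprod_{\tau} C_G(g) \times_{L_\tau} X^\tau,$$
where $L_\tau := a_\tau C_H(\tau) a_\tau^{-1} \subseteq C_G(g)$, $L_\tau$ acts on $X^\tau$ via the conjugation isomorphism $L_\tau \cong C_H(\tau)$, and the $\mathbb{R}$-factor of $\Lambda_G(g)$ acts trivially. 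The coproduct runs over the $H$-conjugacy classes of $\tau \in H^{tors}$ that are $G$-conjugate to $g$.

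The second step is to reinterpret each summand in $\Lambda$-equivariant terms. Because $g = a_\tau \tau a_\tau^{-1} \in L_\tau$, the subgroup $\Lambda_{L_\tau}(g) := (L_\tau \times \mathbb{R})/\langle (g,-1)\rangle$ sits naturally inside $\Lambda_G(g)$, and a short check (using that every element of $\Lambda_G(g)$ has a representative $[c,0]$ after absorbing the $\mathbb{R}$-factor into $\Lambda_{L_\tau}(g)$) gives
$$C_G(g) \times_{L_\tau} X^\tau \;\cong\; \Lambda_G(g) \times_{\Lambda_{L_\tau}(g)} X^\tau$$
as $\Lambda_G(g)$-spaces. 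Conjugation by $a_\tau$ furnishes an isomorphism $\Lambda_{L_\tau}(g) \cong \Lambda_H(\tau)$ compatible with the actions on $X^\tau$. Applying K-theoretic change-of-group then yields
$$K^*_{\Lambda_G(g)}\big((G\times_H X)^g\big) \;\cong\; \prod_\tau K^*_{\Lambda_H(\tau)}(X^\tau).$$

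Taking the product over $[g] \in G^{tors}_{conj}$ and noting that the pairs $([g],[\tau])$ appearing in the resulting double product biject with $H^{tors}_{conj}$, since each $H$-torsion element is $G$-conjugate to exactly one element of $G^{tors}_{conj}$, collapses the double product to $QEll^*_H(X)$. A final bookkeeping step, tracing through the definitions of the restriction map $\phi^*$ from Proposition \ref{restrictionq} and the map $i^*$, shows that this canonical isomorphism agrees with the composite $\rho^G_H$ defined in (\ref{changeofgroup}). The main obstacle will be precisely this bookkeeping: controlling the choice-dependence on $a_\tau$, verifying that the identifications above are indeed $\Lambda_G(g)$-equivariant and not merely $C_G(g)$-equivariant, and confirming that the abstractly-built isomorphism coincides on the nose with $\rho^G_H$.
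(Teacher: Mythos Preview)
Your proposal is correct and follows essentially the same route as the paper: both arguments decompose $(G\times_H X)^g$ as a coproduct over $H$-conjugacy classes $\tau$ that are $G$-conjugate to $g$, identify each piece as an induced $\Lambda_G(g)$-space from $\Lambda_H(\tau)$ (the paper writes this directly as $\Lambda_G(\tau)\times_{\Lambda_H(\tau)} X^\tau$ and then conjugates, whereas you pass through $C_G(g)\times_{L_\tau} X^\tau$ first), and then invoke the change-of-group isomorphism in equivariant $K$-theory. Your explicit flagging of the choice-dependence on $a_\tau$ and the verification that the abstract isomorphism matches $\rho^G_H$ is a point the paper handles with the phrase ``it is straightforward to check,'' so your version is if anything more careful.
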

\begin{proof}
For any $\tau\in H_{conj}$, there exists a unique
$\sigma_{\tau}\in G_{conj}$ such that
$\tau=g_{\tau}\sigma_{\tau}g_{\tau}^{-1}$ for some $g_{\tau}\in
G$.  Consider the maps \begin{equation}\begin{CD}
\Lambda_G(\tau)\times_{\Lambda_H(\tau)}X^{\tau}@>{[[a, t], x
]\mapsto [a, x]}>> (G\times_H X)^{\tau}@>{[u, x]\mapsto
[g_{\tau}^{-1}u, x]}>>
(G\times_HX)^{\sigma}.\end{CD}\end{equation} The first map is
$\Lambda_G(\tau)-$equivariant and the second is equivariant with
respect to the homomorphism $c_{g_{\tau}}:
\Lambda_{G}(\sigma)\longrightarrow \Lambda_G(\tau)$ sending $[u,
t]\mapsto [g_{\tau} u g_{\tau}^{-1}, t]$. Taking a coproduct over
all the elements $\tau\in H_{conj}$ that are conjugate to
$\sigma\in G_{conj}$ in $G$, we get an isomorphism
$$\gamma_{\sigma}: \coprod_{\tau}\Lambda_G(\tau)\times_{\Lambda_H(\tau)} X^{\tau}\longrightarrow
(G\times_HX)^{\sigma}$$ which is $\Lambda_G(\sigma)-$equivariant
with respect to $c_{g_{\tau}}$. Then we have the map
\begin{equation}\gamma:=\prod_{\sigma\in G_{conj}}\gamma_{\sigma}:
\prod_{\sigma\in G_{conj}}
K^*_{\Lambda_G(\sigma)}(G\times_HX)^{\sigma}\longrightarrow
\prod_{\sigma\in
G_{conj}}K^*_{\Lambda_G(\sigma)}(\coprod_{\tau}\Lambda_G(\tau)\times_{\Lambda_H(\tau)}
X^{\tau})
\end{equation}

It is straightforward to check the change-of-group map coincide
with the composite \begin{align*} QEll^*_{G}(G\times_H
X)\buildrel{\gamma}\over\longrightarrow \prod_{\sigma\in
G_{conj}}K^*_{\Lambda_G(\sigma)}(\coprod_{\tau}\Lambda_G(\tau)\times_{\Lambda_H(\tau)}
X^{\tau})\longrightarrow &\prod_{\tau\in
H_{conj}}K^*_{\Lambda_H(\tau)}(X^{\tau})\\&=QEll^*_{H}(X)\end{align*}
with  the second map  the change-of-group isomorphism in
equivariant $K-$theory.
\end{proof}

\section{A new category of orthogonal $G-$spectra}\label{newcat}

To construct a concrete representing spectrum for elliptic
cohomology is a difficult goal to achieve. We consider constructing
a representing spectrum of quasi-elliptic cohomology first, which is
not easy to realize, either.

In this section we first construct a new category of orthogonal $G-$spectra where quasi-elliptic cohomology resides.

The quasi-elliptic cohomology, as defined in (\ref{qelldef}), has the form $$QEll^*_G(X)=\prod_{g\in
G^{tors}_{conj}}K^*_{\Lambda_G(g)}(X^{g})=\bigg(\prod_{g\in
G^{tors}}K^*_{\Lambda_G(g)}(X^{g})\bigg)^G.$$
If we could construct the right adjoint of the fixed point functor $X\mapsto X^g$ from the category of $G-$spaces to that of $\Lambda_G(g)-$spaces, we can construct the representing
spectrum of the theory afterwards. However, the fixed point functor does not preserve colimits, thus, does not have right adjoints. Instead, we consider a concept weaker than adjoints.

\begin{definition}[homotopical adjunction] Let $H$ and $G$ be two compact Lie groups.
%Let $C$ and $D$ be two categories with weak equivalences.
Let \begin{equation} L: G\mathcal{T} \longrightarrow H\mathcal{T}
\mbox{ and } R: H\mathcal{T} \longrightarrow
G\mathcal{T}\end{equation} be two functors. A
\textit{left-to-right homotopical adjunction} is a natural map
\begin{equation}\mbox{Map}_H(LX,Y)\longrightarrow
\mbox{Map}_G(X,RY),\end{equation} which is a weak equivalence of
spaces when $X$ is a $G-$CW complex.

Analogously, a \textit{right-to-left homotopical adjunction} is a
natural map
\begin{equation}
\mbox{Map}_G(X,RY)\longrightarrow \mbox{Map}_H(LX,Y)\end{equation}
which is a weak equivalence of spaces when $X$ is a $G-$CW
complex.

$L$ is called a \textit{homotopical left adjoint} and $R$ a
\textit{homotopical right adjoint}. \label{holr}
\end{definition}

Homotopical adjunction is another way to describe the relation between $G-$equivariant homotopy theory and those equivariant homotopy theory for its closed subgroups.
This definition can be generalized to functors between categories other than $H\mathcal{T}$ and
$G\mathcal{T}$. Homotopical adjunction is a notion more ubiquitous in category theory than adjunctions.

\begin{example} Let $G=\mathbb{Z}/2\mathbb{Z}$ and $g$ be a generator of $G$. We want to find a homotopical right adjoint $R$  of the functor
$X\mapsto X^g$ from the category $G\mathcal{T}$ of $G$-spaces to
the category $\mathcal{T}$ of topological spaces.

Let $Y$ be a topological space. Suppose we have
$$\mbox{Map}(X^g, Y)\simeq \mbox{Map}_{G}(X,
RY).$$ $G$ has two subgroups, $e$ and $G$.
\begin{align*}RY^e&=\mbox{Map}_{G}(G/e, RY)\simeq \mbox{Map}((G/e)^g,
Y)\simeq \mbox{pt};\\  RY^G&=\mbox{Map}_{G}(G/G, RY)\simeq
\mbox{Map}((G/G)^g, Y)=Y.\end{align*} If $Y$ is the empty set,
$R\emptyset$ is $EG$. And generally for any $Y$, one choice of
$RY$ is the join $Y \ast EG$.

By Elmendorf's theorem \ref{elem}, the space $RY$ is unique up to
$G-$homotopy. By definition, the functor $R$  is a homotopical
right adjoint to the fixed point functor $X\mapsto X^g$.

\label{motivating2}
\end{example}

After we find a homotopical right adjoint $R_g$ of the fixed point functor $X\mapsto X^g$, we can construct a space $QEll_{G, n}$ representing
the $n-$th $G-$equivariant quasi-elliptic cohomology $QEll^n_G(-)$ up to the weak equivalence \begin{equation}\pi_0(QEll_{G, n})=QEll^n_G(S^0),\label{weaksenqe}\end{equation}
In other words, $QEll_{G, n}$ represents $QEll^n_G(-)$ in the category $GwT$ below. The explicit construction of $QEll_{G,n}$ can be found in Corollary \ref{KUmain}.
\begin{definition}The category $GwT$ is the homotopy category of the category of $G-$spaces with the weak equivalence defined by
\begin{equation}A\sim B\mbox{  if   }\pi_0(A)=\pi_0(B).\label{weaksenS}\end{equation}

A $G-$space $A$ in $GwT$ is said to represent $H^n_G$ if we have a natural map \begin{equation}\pi_0(A)=H^n_G(S^0).\label{weaksenspV}\end{equation} \label{gwtdef}
\end{definition}

Moreover, we can consider the category below of orthogonal $G-$spectra.
\begin{definition}The category $GwS$ is the homotopy category of the category of orthogonal $G-$spectra with the weak equivalence defined by
\begin{equation}X\sim Y\mbox{  if   }\pi_0(X(V))=\pi_0(Y(V)),\label{weaksensp12}\end{equation} for each   faithful  $G-$representation  $V$.

An orthogonal $G-$spectrum $X$ in $GwS$ is said to represent a theory $H^*_G$ if we have a natural map \begin{equation}\pi_0(X(V))=H^V_G(S^0),\label{weaksenspV}\end{equation} for each   faithful  $G-$representation  $V$. \label{gwsdef}
\end{definition}

The orthogonal $G-$spectrum representing quasi-elliptic cohomology in $GwS$ is constructed in Section \ref{GorthospectraQEll}.

% check those examples in MM

\section{Equivariant spectra}\label{basicweakconstr}
Let $E^*_G(-)$ be a $G-$equivariant cohomology theory. Define
\begin{equation}QE^*_G(X):=\prod_{\sigma\in
G^{tors}_{conj}}E^*_{\Lambda(\sigma)}(X^{\sigma})=\bigg(\prod_{\sigma\in
G^{tors}}E^*_{\Lambda(\sigma)}(X^{\sigma})\bigg)^G.\label{defintroE2}\end{equation}
In this section, for each integer $n$, each compact Lie group $G$,
we construct a space $QE_{G, n}$ representing the $n-$th
$G-$equivariant $QE^n_G$ up to weak equivalence. %need to give an interpretation what this weak equivalence is.

%To construct it we need the construction of homotopical adjoint functors, which is defined in Definition
%Before the construction,

%Before the construction, we explain what a good "weak equivalence" means.

The construction of the right homotopical adjoint in Theorem
\ref{main} needs the space $S_{G,g}$ below.
For any compact Lie group $G$, let %$EG$ denote any contractible $CW-$complex on which $G$ acts freely by permuting cells. L
 $\langle g\rangle$ denote the cyclic subgroup of $G$
generated by $g\in G^{tors}$ and $\ast$  denote the join. Let
$$S_{G,g}:=\mbox{Map}_{\langle g\rangle}(G, \ast_{K}E(\langle
g\rangle/K))$$ where $K$ goes over all the maximal subgroups of
$\langle g\rangle$ and $E(\langle g\rangle/K)$ is the universal
space of the cyclic group $\langle g\rangle/K$. The action of
$\langle g\rangle/K$ on $E(\langle g\rangle/K)$ is free. For this
space $S_{G,g}$, it is classified up to $G-$homotopy, as shown in
the following lemma.

\begin{lemma}

For any closed subgroup $H\leqslant G$, $S_{G, g}$ satisfies
\begin{equation}S_{G, g}^H\simeq\begin{cases}\mbox{pt},&\text{if for any $b\in G$,  $b^{-1}\langle
g\rangle b\nleq
H$;}\\
\emptyset,&\text{if there exists a $b\in G$ such that
$b^{-1}\langle g\rangle b \leqslant
H$.}\end{cases}\end{equation}\label{sf}

\end{lemma}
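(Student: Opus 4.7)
The plan is to identify $S_{G,g}$ as a coinduced $G$-space, rewrite $S_{G,g}^H$ as a space of $\langle g\rangle$-equivariant maps out of $G/H$, and then read off the homotopy type from the known fixed-point behaviour of the universal spaces $E(\langle g\rangle/K)$. I view $G$ as a $(\langle g\rangle, G)$-bispace with $\langle g\rangle$ acting by left multiplication and $G$ by right multiplication, so that $S_{G,g}=\mbox{Map}_{\langle g\rangle}(G,\ast_K E(\langle g\rangle/K))$ is the coinduction, with $G$-action $(b\cdot f)(a)=f(ab)$, and a direct unwinding yields
\[ S_{G,g}^H \;=\; \mbox{Map}_{\langle g\rangle}\bigl(G/H,\; \ast_K E(\langle g\rangle/K)\bigr), \]
where $\langle g\rangle$ acts on the right coset space $G/H$ by left multiplication.

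Next I would analyse the fixed-point structure of the target. Since the join commutes with fixed points and $E(\langle g\rangle/K)$ is a free contractible $\langle g\rangle/K$-space, $E(\langle g\rangle/K)^L$ is contractible when $L\leq K$ and empty when $L\not\leq K$. Because $\langle g\rangle$ is finite cyclic (as $g$ is torsion), every proper subgroup $L\subsetneq\langle g\rangle$ lies inside some maximal proper subgroup, so $\bigl(\ast_K E(\langle g\rangle/K)\bigr)^L$ is a join containing a contractible factor and is therefore contractible; whereas for $L=\langle g\rangle$ no maximal proper $K$ contains $L$, every factor of the join is empty, and the join itself is empty.

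The stabilizer of $aH\in G/H$ under the left $\langle g\rangle$-action is $\langle g\rangle\cap aHa^{-1}$, and this equals $\langle g\rangle$ precisely when $a^{-1}\langle g\rangle a\leq H$. If some $b\in G$ satisfies $b^{-1}\langle g\rangle b\leq H$, then the point $bH$ has full stabilizer $\langle g\rangle$, so any $\langle g\rangle$-equivariant map would have to send it into $\bigl(\ast_K E(\langle g\rangle/K)\bigr)^{\langle g\rangle}=\emptyset$, giving $S_{G,g}^H=\emptyset$. Otherwise every orbit type occurring in $G/H$ has proper $\langle g\rangle$-stabilizer, hence the target is contractible on every such orbit type; a cell-by-cell obstruction argument on a $\langle g\rangle$-CW structure for $G/H$ (in the spirit of Proposition \ref{keyl}, applied to the functor $Y\mapsto\mbox{Map}_{\langle g\rangle}(Y,\ast_K E(\langle g\rangle/K))$ and the constant map to $\mathrm{pt}$) then forces $S_{G,g}^H$ to be weakly contractible. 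I expect this last step to be the main subtlety: passing from orbit-wise contractibility of the target to global contractibility of the mapping space requires care with the CW structure, whereas everything upstream is routine bookkeeping of coinduction, conjugation, and the join--fixed-point interaction.
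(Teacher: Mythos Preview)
Your proposal is correct and follows essentially the same route as the paper: both rewrite $S_{G,g}^H$ as $\mbox{Map}_{\langle g\rangle}(G/H,\ast_K E(\langle g\rangle/K))$, observe that the target has empty $\langle g\rangle$-fixed points but contractible fixed points for every proper subgroup, and then conclude by equivariant obstruction theory over a $\langle g\rangle$-CW structure on $G/H$. Your write-up is in fact a bit more explicit about the coinduction bookkeeping and about why every proper subgroup of $\langle g\rangle$ sits inside some maximal one, but the underlying argument is the same.
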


\begin{proof}
For any closed subgroup $H$ of $G$.
\begin{equation}S_{G,g}^H=\mbox{Map}_{\langle g\rangle}(G/H, \ast_{K}E(\langle
g\rangle/K))\end{equation} where $K$ goes over all the cyclic
groups $\langle g^m\rangle$ with $\frac{|g|}{m}$ a prime.

If there exists an $b\in G$ such that $b^{-1}\langle g\rangle b
\leqslant H$, it is equivalent to say that there exists points in
$G/H$ that can be fixed by $g$. But there are no points in
$\ast_{K}E(\langle g\rangle/K)$ that can be fixed by $g$. So there
is no $\langle g\rangle-$equivariant map from $G/H$ to
$\ast_{K}E(\langle g\rangle/K)$. In this case $S_{G,g}^H$ is
empty.

If for any $b\in G$, $b^{-1}\langle g\rangle b\nleq H$, it is
equivalent to say that there are no points in $G/H$ that can be
fixed by $g$. And for any subgroup $\langle g^m\rangle$ which is
not $\langle g\rangle$ itself, $(\ast_{K}E(\langle
g\rangle/K))^{\langle g^m\rangle}$ is the join of several
contractible spaces $E(\langle g\rangle/K)^{\langle g^m\rangle}$,
thus, contractible. So all the homotopy groups
$\pi_n((\ast_{K}E(\langle g\rangle/K))^{\langle g^m\rangle})$ are
trivial. For any $n\geq 1$ and any $\langle g\rangle-$equivariant
map $$f: (G/H)^n\longrightarrow \ast_{K}E(\langle
g\rangle/K)$$from the $n-$skeleton of $G/H$, the obstruction
cocycle is zero.

Then by equivariant obstruction theory, $f$ can be extended to the
$(n+1)-$cells of $G/H$, and any two extensions $f$ and $f'$ are
$\langle g\rangle-$homotopic.

%Since $(G/H)^1$ is the union of

So in this case  $S_{G,g}^H$ is contractible.
\end{proof}
Theorem \ref{main} is crucial to the construction of $QE_{G, n}$.
\begin{theorem}
Let $G$ be a compact Lie group and $g\in G^{tors}$. A homotopical
right adjoint of the functor $L_g: G\mathcal{T}\longrightarrow
C_G(g)\mathcal{T},\mbox{ } X\mapsto X^g$ is
\begin{equation}R_g:
C_G(g)\mathcal{T}\longrightarrow G\mathcal{T}\mbox{,   }
Y\mapsto\mbox{Map}_{C_G(g)}(G, Y\ast S_{C_G(g), g}).\end{equation}
\label{main}\end{theorem}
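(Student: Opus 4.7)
My plan is to combine the standard coinduction adjunction with Proposition \ref{keyl}, reducing everything to an orbit-level computation controlled by Lemma \ref{sf}. First, I would apply the classical adjunction
\[
\mbox{Map}_G(X, \mbox{Map}_{C_G(g)}(G, Z)) \;\cong\; \mbox{Map}_{C_G(g)}(X, Z)
\]
with $Z = Y \ast S_{C_G(g), g}$, which identifies $\mbox{Map}_G(X, R_g Y)$ with $\mbox{Map}_{C_G(g)}(X, Y \ast S_{C_G(g), g})$. So it suffices to produce a natural weak equivalence between the latter and $\mbox{Map}_{C_G(g)}(X^g, Y)$ for every $G$-CW complex $X$.

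For the natural transformation I would use restriction along $X^g \hookrightarrow X$. Since $g$ is central in $C_G(g)$, any conjugate of $\langle g \rangle$ by an element of $C_G(g)$ equals $\langle g \rangle$ itself, so Lemma \ref{sf} (applied to $H = \langle g \rangle \leq C_G(g)$) gives $S_{C_G(g), g}^g = \emptyset$, and therefore $(Y \ast S_{C_G(g), g})^g = Y^g \ast \emptyset = Y^g$. Any $C_G(g)$-equivariant map $X \to Y \ast S_{C_G(g), g}$ thus restricts on $X^g$ to a map landing in $Y^g \subset Y$, defining a natural transformation
\[
\rho_X: \mbox{Map}_{C_G(g)}(X, Y \ast S_{C_G(g), g}) \longrightarrow \mbox{Map}_{C_G(g)}(X^g, Y).
\]

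Both sides are contravariant functors of $X$ that send homotopy colimits to homotopy limits (the domain is representable, and the target factors as $\mbox{Map}_{C_G(g)}(-, Y)$ applied to $(-)^g$, which preserves homotopy colimits of $G$-CW complexes). So by Proposition \ref{keyl} it suffices to check that $\rho_{G/H}$ is a weak equivalence for every closed subgroup $H \leq G$.

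For the orbit check, I would stratify $G/H$ by $C_G(g)$-isotropy: a point $aH \in G/H$ has stabilizer $K = C_G(g) \cap a H a^{-1}$, and $g \in K$ if and only if $a^{-1}ga \in H$, i.e., $aH \in (G/H)^g$. Using $(Y \ast S_{C_G(g), g})^K = Y^K \ast S_{C_G(g), g}^K$ combined with Lemma \ref{sf}, one gets $(Y \ast S_{C_G(g), g})^K = Y^K$ when $g \in K$, and $(Y \ast S_{C_G(g), g})^K \simeq \mbox{pt}$ when $g \notin K$ (a join with a contractible space). Decomposing $\mbox{Map}_{C_G(g)}(G/H, Y \ast S_{C_G(g), g})$ along the $C_G(g)$-orbits in $G/H$ and comparing with $\mbox{Map}_{C_G(g)}((G/H)^g, Y)$, the map $\rho_{G/H}$ becomes the projection that discards only the contractible factors coming from orbits with $g \notin K$, hence is a weak equivalence. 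The main technical subtlety is the orbit bookkeeping, and in the compact Lie case one must phrase the decomposition via the isotropy stratification rather than a literal coproduct, using centrality of $g$ in $C_G(g)$ to collapse the ``any conjugate of $\langle g\rangle$'' clause of Lemma \ref{sf} to just $\langle g\rangle$ itself.
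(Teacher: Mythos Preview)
Your overall architecture matches the paper's proof exactly: use the coinduction adjunction to replace $\mbox{Map}_G(X,R_gY)$ by $\mbox{Map}_{C_G(g)}(X, Y\ast S_{C_G(g),g})$, define the comparison map by restriction to $X^g$ (using $S_{C_G(g),g}^g=\emptyset$ from Lemma~\ref{sf}), and then invoke Proposition~\ref{keyl} to reduce to orbits $G/H$. That part is fine and is precisely what the paper does.

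The gap is in your orbit-level check. You write that you would ``decompose $\mbox{Map}_{C_G(g)}(G/H, Y\ast S_{C_G(g),g})$ along the $C_G(g)$-orbits in $G/H$'' so that $\rho_{G/H}$ becomes a projection off contractible factors. But for a compact Lie group $G/H$ is typically a connected manifold, not a coproduct of its $C_G(g)$-orbits, so the mapping space does not split as a product over strata, and there is no literal ``projection discarding contractible factors''. You flag this yourself (``phrase the decomposition via the isotropy stratification rather than a literal coproduct'') but do not supply the replacement argument. The paper fills exactly this gap with equivariant obstruction theory: since for every isotropy group $K$ of a point outside $(G/H)^g$ one has $g\notin K$ and hence $(Y\ast S_{C_G(g),g})^K\simeq\ast$ by Lemma~\ref{sf}, all obstruction groups to extending a map $(G/H)^g\to Y\hookrightarrow Y\ast S_{C_G(g),g}$ over the remaining cells of $G/H$ vanish, and any two extensions are $C_G(g)$-homotopic. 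That is what turns your stratification intuition into an actual weak equivalence; without it the argument is incomplete.
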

\begin{proof}
Let $H$ be any closed subgroup of $G$.

First we show given a $C_G(g)-$equivariant map $f:
(G/H)^g\longrightarrow Y$, it extends uniquely up to
$C_G(g)-$homotopy to a $C_G(g)-$equivariant map $\widetilde{f}:
G/H\longrightarrow Y\ast S_{C_G(g), g}$. $f$ can be viewed as a
map $(G/H)^g\longrightarrow Y\ast S_{C_G(g), g}$ by composing with
the inclusion of one end of the join
$$Y\longrightarrow Y\ast S_{C_G(g), g},\mbox{           } y\mapsto (1y, 0).$$

If $bH\in (G/H)^g$,    define $\widetilde{f}(bH): =f(bH).$

If $bH$ is not in $(G/H)^g$, its stabilizer group does not contain
$g$.  By Lemma \ref{sf}, for any subgroup $K$ of it,  $S_{C_G(g),
g}^K$ is contractible. So $(Y\ast S_{C_G(g), g})^{K}=Y^{K}\ast
S_{C_G(g), g}^{K}$ is contractible. In other words, if $K$ occurs
as the isotropy subgroup of a point outside $(G/H)^g$,
$\pi_n((Y\ast S_{C_G(g), g})^{K})$ is trivial. By equivariant
obstruction theory, $f$ can extend
 to a $C_G(g)-$equivariant map $\widetilde{f}:
G/H\longrightarrow Y\ast S_{C_G(g), g}$, and any two extensions
are $C_G(g)-$homotopy equivalent. In addition, $S_{C_G(g), g}^{g}$
is empty. So the image of the restriction of any map
$G/H\longrightarrow Y\ast S_{C_G(g), g}$ to the subspace $(G/H)^g$
is contained in the end $Y$ of the join.

Thus, $\mbox{Map}_{C_G(g)}((G/H)^g, Y)$ is weak equivalent to
$\mbox{Map}_{C_G(g)}(G/H, Y\ast S_{C_G(g), g})$.

Moreover, we have the equivalence by adjunciton
\begin{equation}
\mbox{Map}_{G}(G/H, \mbox{           }\mbox{Map}_{C_G(g)}(G, Y\ast
S_{C_G(g), g}))\cong \mbox{Map}_{C_G(g)}(G/H, \mbox{ } Y\ast
S_{C_G(g), g})\end{equation} So we get
\begin{equation}R_gY^H=\mbox{Map}_{G}(G/H, R_gY)\backsimeq
\mbox{Map}_{C_G(g)}((G/H)^g,
Y)\label{weakorbitbasic}\end{equation} Let $X$ be  of the homotopy
type of a $G-$CW complex. Let $X^k$ denote the $k-$skeleton of
$X$.  Consider the functors
$$\mbox{Map}_G(-, R_gY)\mbox{ and }\mbox{Map}_{C_G(g)}((-)^g, Y)$$ from
$G\mathcal{T}$ to $\mathcal{T}$. Both of them sends homotopy
colimit to  homotopy limit. In addition, we have a natural map
from $\mbox{Map}_G(-, R_gY)$ to $\mbox{Map}_{C_G(g)}((-)^g, Y)$ by
sending a $G-$map $F: X\longrightarrow R_gY$ to the composition
\begin{equation}X^g\buildrel{F^g}\over\longrightarrow (R_gY)^g\longrightarrow
Y^g\subseteq Y\label{fi}\end{equation} with the second map
$f\mapsto f(e)$. Note that for any $f\in (R_gY)^g$, $f(e)=(g\cdot
f)(e)= f(e g)=f(g)=g\cdot f(e)$ so $f(e)\in (Y\ast S_{C_G(g),
g})^g=Y^g$ and the second map is well-defined. It gives weak
equivalence on orbits, as shown in (\ref{weakorbitbasic}). Thus,
by Proposition \ref{keyl}, $R_g$ is a homotopical right adjoint of
$L$. \end{proof}

%\section{Main Conclusions}

\begin{theorem}Let $G$ be a compact Lie group, $g\in G^{tors}$, and $Y$ a $\Lambda_G(g)-$space. The subgroup $\{[(1, t)]\in\Lambda_G(g)|
t\in\mathbb{R}\}$ of $\Lambda_G(g)$ is isomorphic to $\mathbb{R}$.
We use the same symbol $\mathbb{R}$ to denote it. Consider the
functor $\mathcal{L}_g: G\mathcal{T}\longrightarrow
\Lambda_G(g)\mathcal{T}, X\mapsto X^g$ where $\Lambda_G(g)$ acts
on $X^g$ by $[g, t]\cdot x=gx.$ The functor $\mathcal{R}_g:
\Lambda_G(g)\mathcal{T}\longrightarrow G\mathcal{T}$ with
\begin{equation}\mathcal{R}_gY=\mbox{Map}_{C_G(g)}(G, Y^{\mathbb{R}}\ast S_{C_G(g), g})\end{equation} is a homotopical right adjoint of $\mathcal{L}_g$.\label{KUM}
\end{theorem}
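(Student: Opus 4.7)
The plan is to reduce this theorem directly to Theorem \ref{main} by exploiting that the $\mathbb{R}$-subgroup $\{[1,t]\}$ of $\Lambda_G(g)$ acts trivially on the fixed-point space $X^g$, which forces the image of any $\Lambda_G(g)$-equivariant map $X^g \to Y$ to lie in $Y^{\mathbb{R}}$. Once this is identified, $\mathcal{R}_g$ becomes the composite of "take $\mathbb{R}$-fixed points" with the right adjoint $R_g$ already produced in Theorem \ref{main}.

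First I would set up the reduction. In $\Lambda_G(g)$ the elements $[c,0]$ and $[1,t]$ commute, since $[c,0][1,t]=[c,t]=[1,t][c,0]$, so the $C_G(g)$-action coming from the embedding $c\mapsto [c,0]$ preserves $Y^{\mathbb{R}}$, making it a $C_G(g)$-space. By definition of $\mathcal{L}_g$ one has $[c,t]\cdot x = cx$ on $X^g$; in particular $[1,t]$ acts trivially. This action is well-defined against the relation $(g,-1)\sim 1$ precisely because $g$ fixes every point of $X^g$. Hence for any $\Lambda_G(g)$-equivariant map $\phi:X^g\to Y$ and any $t\in\mathbb{R}$,
$$[1,t]\cdot\phi(x)\;=\;\phi([1,t]\cdot x)\;=\;\phi(x),$$
which forces $\phi(x)\in Y^{\mathbb{R}}$. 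Conversely, a $C_G(g)$-equivariant map $\phi:X^g\to Y^{\mathbb{R}}$ is automatically $\Lambda_G(g)$-equivariant: writing $[c,t]=[c,0]\cdot[1,t]$, one computes
$$\phi([c,t]\cdot x)\;=\;\phi(cx)\;=\;[c,0]\cdot\phi(x)\;=\;[c,t]\cdot\phi(x),$$
the last equality because $\phi(x)\in Y^{\mathbb{R}}$. This gives a natural homeomorphism
$$\mbox{Map}_{\Lambda_G(g)}(X^g,Y)\;=\;\mbox{Map}_{C_G(g)}(X^g,Y^{\mathbb{R}}).$$

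Second, I would invoke Theorem \ref{main} applied to the $C_G(g)$-space $Y^{\mathbb{R}}$: for every $G$-CW complex $X$ it yields a natural weak equivalence
$$\mbox{Map}_{C_G(g)}(X^g,Y^{\mathbb{R}})\;\simeq\;\mbox{Map}_G\bigl(X,\;\mbox{Map}_{C_G(g)}(G,\,Y^{\mathbb{R}}\ast S_{C_G(g),g})\bigr)\;=\;\mbox{Map}_G(X,\mathcal{R}_g Y).$$
Composing the two displayed identifications produces the natural map
$$\mbox{Map}_{\Lambda_G(g)}(\mathcal{L}_g X,Y)\longrightarrow\mbox{Map}_G(X,\mathcal{R}_g Y)$$
required in Definition \ref{holr}, and it is a weak equivalence whenever $X$ is a $G$-CW complex.

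The only substantive step is verifying that $\Lambda_G(g)$-maps into $Y$ are the same as $C_G(g)$-maps into $Y^{\mathbb{R}}$; this is purely formal once one has unpacked the $\Lambda_G(g)$-action on $X^g$ and confirmed compatibility with the quotient relation $(g,-1)$. After that, the theorem is essentially a corollary of Theorem \ref{main}, and naturality in $X$ and $Y$ is inherited from the naturality already established there. I expect no genuine obstacle beyond the bookkeeping with the cocycle relation in $\Lambda_G(g)$.
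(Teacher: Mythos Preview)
Your proposal is correct and follows essentially the same route as the paper: both first reduce $\mbox{Map}_{\Lambda_G(g)}(X^g,Y)$ to $\mbox{Map}_{C_G(g)}(X^g,Y^{\mathbb{R}})$ via the observation that $\mathbb{R}$ acts trivially on $X^g$, and then appeal to Theorem \ref{main}. The only cosmetic difference is that the paper restates the extension argument for $(G/H)^g\to Y^{\mathbb{R}}$ before saying ``the rest of the proof is analogous to that of Theorem \ref{main}'', whereas you invoke Theorem \ref{main} as a black box applied to the $C_G(g)$-space $Y^{\mathbb{R}}$; your packaging is arguably cleaner.
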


\begin{proof}
Let $X$ be a $G-$space. Let $H$ be any closed subgroup of $G$.
Note for any $G-$space $X$, $\mathbb{R}$ acts trivially on $X^g$,
thus, the image of any $\Lambda_G(g)-$equivariant map
$X^g\longrightarrow Y$ is in $Y^{\mathbb{R}}$. So we have
$\mbox{Map}_{\Lambda_G(g)}(X^g, Y)=\mbox{Map}_{C_G(g)}(X^g,
Y^{\mathbb{R}}).$

First we show $f: (G/H)^g\longrightarrow Y^{\mathbb{R}}$ extends
uniquely up to $C_G(g)-$homotopy to a $C_G(g)-$equivariant map
$\widetilde{f}: G/H\longrightarrow Y^{\mathbb{R}}\ast S_{C_G(g),
g}$. $f$ can be viewed as a map $(G/H)^g\longrightarrow
Y^{\mathbb{R}}\ast S_{C_G(g), g}$ by composing with the inclusion
as the end of the join
$$Y^{\mathbb{R}}\longrightarrow Y^{\mathbb{R}}\ast S_{C_G(g), g},\mbox{    } y\mapsto (1y, 0).$$

The rest of the proof is analogous to that of Theorem
\ref{main}.\end{proof}

Theorem \ref{KUM} implies Theorem \ref{Emain} directly.
\begin{theorem}
For any compact Lie group $G$ and any integer $n$, let $E_{G, n}$
denote the space representing the $n-$th $G-$equivariant
$E-$theory. Then each $QE^n_G(-)$ is represented  by the
space
$$QE_{G,
n}:=\prod_{g\in G^{tors}_{conj}}\mathcal{R}_g(KU_{\Lambda_G(g),
n})
$$ in the category $GwT$ where
$\mathcal{R}_g(E_{\Lambda_G(g), n})$ is the space
$$\mbox{Map}_{C_G(g)}(G, E_{\Lambda_G(g), n}^{\mathbb{R}}\ast
S_{C_G(g), g}).$$

%In other words, $QEll^n_G$ is represented by $$\prod_{g\in G^{tors}_{conj}}R_g(KU_{\Lambda_G(g)})$$ up to weak equivalence.
\label{Emain}
\end{theorem}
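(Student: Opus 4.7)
The plan is to deduce Theorem \ref{Emain} as an almost immediate corollary of Theorem \ref{KUM}, once one unpacks what it means to represent a cohomology theory in $GwT$ in the sense of Definition \ref{gwtdef}. First, I would apply Theorem \ref{KUM} with $Y = E_{\Lambda_G(g), n}$: the functor $\mathcal{R}_g$ is a homotopical right adjoint of the fixed point functor $\mathcal{L}_g: X \mapsto X^g$, so for each $G$-CW complex $X$ we obtain a natural weak equivalence
\begin{equation*}
\mbox{Map}_G(X,\mathcal{R}_g E_{\Lambda_G(g),n}) \;\simeq\; \mbox{Map}_{\Lambda_G(g)}(X^g, E_{\Lambda_G(g),n}).
\end{equation*}

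Next, I would specialize to $X = S^0$ with trivial $G$-action, so that $(S^0)^g = S^0$. Since $E_{\Lambda_G(g), n}$ represents the $n$-th $\Lambda_G(g)$-equivariant $E$-theory, applying $\pi_0$ to the right-hand side yields $E^n_{\Lambda_G(g)}(S^0)$. Therefore
\begin{equation*}
\pi_0\bigl(\mathcal{R}_g E_{\Lambda_G(g), n}\bigr) \;=\; E^n_{\Lambda_G(g)}(S^0).
\end{equation*}
Taking the product over a set of conjugacy-class representatives $g \in G^{tors}_{conj}$, and using that $\pi_0$ commutes with products, I would conclude
\begin{equation*}
\pi_0(QE_{G, n}) \;=\; \prod_{g \in G^{tors}_{conj}} E^n_{\Lambda_G(g)}(S^0) \;=\; QE^n_G(S^0),
\end{equation*}
where the last equality is the definition (\ref{defintroE2}) evaluated at $S^0$ (noting $(S^0)^g = S^0$ for every $g$). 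By Definition \ref{gwtdef}, this exhibits $QE_{G, n}$ as a representing object for $QE^n_G(-)$ in $GwT$.

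There is no serious obstacle here, since the theorem is phrased as a direct consequence of Theorem \ref{KUM}. The only bookkeeping is to check that the product of homotopical right adjoints remains a homotopical right adjoint in the weakened sense of Definition \ref{gwtdef} — i.e., that $\pi_0$ detects the weak equivalences used to build $GwT$ on products — which is immediate because $\pi_0$ and products commute. The substantive content of the theorem has already been absorbed into Lemma \ref{sf} (classifying $S_{C_G(g), g}$ by its fixed points) and into the obstruction-theoretic construction of the homotopical right adjoint $\mathcal{R}_g$ in Theorem \ref{KUM}.
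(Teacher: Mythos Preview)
Your proposal is correct and matches the paper's own approach: the paper simply states that Theorem \ref{KUM} implies Theorem \ref{Emain} directly, and your write-up is precisely the unpacking of that implication via the homotopical adjunction applied to $Y=E_{\Lambda_G(g),n}$, specialization to $X=S^0$, and taking the product over $g\in G^{tors}_{conj}$.
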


And we have the corresponding conclusion for quasi-elliptic
cohomology.
\begin{corollary}
For any compact Lie group $G$ and any integer $n$, let $KU_{G, n}$
denote the space representing the $n-$th $G-$equivariant
$KU-$theory. The $n-$th
 quasi-elliptic cohomology
$QEll^n_G(-)$ is  represented  by the space
$$QEll_{G,
n}:=\prod_{g\in G^{tors}_{conj}}\mathcal{R}_g(KU_{\Lambda_G(g),
n})
$$ in the category $GwS$ where $\mathcal{R}_g(KU_{\Lambda_G(g), n})$ is
the space
$$\mbox{Map}_{C_G(g)}(G, KU_{\Lambda_G(g), n}^{\mathbb{R}}\ast
S_{C_G(g), g}).$$ \label{KUmain}
\end{corollary}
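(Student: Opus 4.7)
The plan is to obtain this corollary as an immediate specialization of Theorem \ref{Emain}. Observe that Definition \ref{qelldef} reads
$$QEll^*_G(X) = \prod_{g \in G^{tors}_{conj}} K^*_{\Lambda_G(g)}(X^g),$$
which is literally the general form $QE^*_G(X)$ of equation (\ref{defintroE2}) with the input equivariant cohomology theory $E^*_G$ taken to be equivariant K-theory $K^*_G$. So the work of the corollary is essentially to check that this instantiation is legitimate and then to read off the formula.

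To carry this out, I would first verify that equivariant K-theory supplies the data required by Theorem \ref{Emain}. For each compact Lie group $G$ and each integer $n$, there is a $G$-space $KU_{G,n}$ representing $K^n_G(-)$, which is classical. The centralizer $C_G(g)$ is closed in $G$ hence a compact Lie group, and $\Lambda_G(g) = (C_G(g) \times \mathbb{R})/\langle (g,-1)\rangle$ is likewise a compact Lie group (the extension of $\mathbb{T}$ by $C_G(g)$ described in Section \ref{lambdarepresentationlemma}), so the representing $\Lambda_G(g)$-space $KU_{\Lambda_G(g), n}$ exists as required.

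With these verifications in place, I would plug $E = K$ into the formula of Theorem \ref{Emain}, obtaining exactly the $G$-space
$$\prod_{g \in G^{tors}_{conj}} \mathcal{R}_g(KU_{\Lambda_G(g), n}),$$
with $\mathcal{R}_g$ the homotopical right adjoint constructed in Theorem \ref{KUM}. The weak equivalence
$$\pi_0\Bigl(\prod_{g \in G^{tors}_{conj}} \mathcal{R}_g(KU_{\Lambda_G(g), n})\Bigr) \cong QEll^n_G(S^0)$$
is precisely the content of Theorem \ref{Emain} specialized at $E = K$, since the product passes through $\pi_0$ and the homotopical adjunction of Theorem \ref{KUM} identifies $\pi_0 \mathcal{R}_g(KU_{\Lambda_G(g), n})$ with $K^n_{\Lambda_G(g)}((S^0)^g)$. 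This delivers representability in $GwT$ (the ambient category in Theorem \ref{Emain}; the symbol $GwS$ appearing in the corollary statement seems a typographical slip for $GwT$, since $QEll_{G,n}$ is a single space rather than an orthogonal $G$-spectrum).

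There is really no novel obstacle in this corollary; all substantive work has been absorbed into Theorem \ref{KUM} (construction of the homotopical right adjoint via the auxiliary space $S_{C_G(g), g}$) and Theorem \ref{Emain} (the passage from a single $\Lambda_G(g)$-representing space to a $G$-representing space through $\mathcal{R}_g$). The only minor thing worth stating explicitly is that the $G$-action on the product is given by permuting indices under conjugation, so that the fixed points recover the inner product formulation $\bigl(\prod_{g \in G^{tors}} K^*_{\Lambda_G(g)}(X^g)\bigr)^G$ appearing in Definition \ref{qelldef}.
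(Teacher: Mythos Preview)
Your proposal is correct and matches the paper's approach exactly: the paper gives no explicit proof of this corollary, simply introducing it with ``And we have the corresponding conclusion for quasi-elliptic cohomology'' immediately after Theorem~\ref{Emain}, so the intended argument is precisely the specialization $E = K$ that you spell out. Your observation that $GwS$ should read $GwT$ is also apt, since the ambient category in Theorem~\ref{Emain} is $GwT$ and $QEll_{G,n}$ is a single $G$-space.
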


The construction of the orthogonal $G-$spectrum of $QE-$theory in
Section \ref{anotherweaksp} is based on that of $QE_{G, n}$.

\section{Orthogonal $G-$spectrum of $QE^*_G$}\label{GorthospectraQEll}
In this section, we consider equivariant cohomology
theories $E$ that can be represented by $\mathcal{I}_G-$FSP $(E_G,
\eta^{E}, \mu^{E})$ and have the same key features as equivariant
complex K-theories. More explicitly, \\ $\bullet$ The theories
$\{E_G^*\}_G$ have the change-of-group isomorphism, i.e. for any
closed subgroup $H$ of $G$ and $H-$space $X$, the change-of-group
map $\rho^G_H: E^*_G(G\times_HX)\longrightarrow E^*_H(X)$ defined
by $E^*_G(G\times_HX)\buildrel{\phi^*}\over\longrightarrow
E^*_H(G\times_H X)\buildrel{i^*}\over\longrightarrow E_H^*(X)$ is
an isomorphism where $\phi^*$ is the restriction map and $i:
X\longrightarrow
G\times_HX$ is the $H-$equivariant map defined by $i(x)=[e, x].$\\
$\bullet$ There exists an orthogonal spectrum $E$ such that for
any compact Lie group $G$ and "large" real $G-$representation $V$
and a compact $G-$space $B$ we have a bijection
$E_G(B)\longrightarrow [B_+, E(V)]^G$. And $(E_G, \eta^{E}, \mu^{E})$ is the underlying orthogonal $G-$spectrum of $E$.\\
$\bullet$ Let $G$ be a compact Lie group and $V$ an orthogonal
$G-$representation. For every ample $G-$representation $W$, the
adjoint structure map $\widetilde{\sigma}^E_{V, W}:
E(V)\longrightarrow \mbox{Map}(S^W, E(V\oplus W))$ is a $G-$weak
equivalence.

%In Section \ref{anotherweaksp} and \ref{Gorthospstructure},
In this section based
on the spaces we construct in Section \ref{basicweakconstr}, we
construct a $\mathcal{I}_G-$FSP representing the theory
$QE$ in the category $GwS$ defined in Definition \ref{gwsdef}.

\subsection{The construction of $QE(G, -)$}\label{anotherweaksp}

Let $G$ be any compact Lie group.  In this section  we consider
the case that the equivariant cohomology theory $E$ can be
represented by a global spectrum $(E, \eta^{E}, \mu^{E})$ and show in Section \ref{qegcon}
that there is a $\mathcal{I}_G-$FSP $(QE(G, -), \eta^{QE}, \mu^{QE})$
%such that for each faithful real $G-$representation $V$, $QE(G, V)$ weakly
representing $QE^V_G(-)$ in the category $GwS$. Before that we construct each ingredient in the construction.

\subsubsection{The construction of $S(G, V)_g$}First we construct an orthogonal version $S(G, V)_g:=
Sym(V)\setminus Sym(V)^g$ of the space $S_{G, g}$. It is the space
classified by the condition (\ref{sfnew}) which is also the  condition classifying $S_{G,g}$.

Let $g\in G^{tors}$ and $V$ a real $G-$representation. Let
$Sym^n(V)$ denote the $n-$th symmetric power  $V^{\otimes n} $,
which has an evident $G\wr\Sigma_n-$action on it. Let
$$Sym(V):=\bigoplus_{n\geq 0} Sym^n(V).$$

When $V$ is an ample $G-$representation, $Sym(V)$ is a
$G-$representation containing all the irreducible
$G-$representations.  Since in this case $V$ is faithful $G-$representation,
for any closed subgroup $H$ of $G$, $Sym(V)$ is a faithful
$H-$representation, thus, a complete $H-$universe.

We use $S(G, V)_g$ to denote the space $Sym(V)\setminus Sym(V)^g.$ The complex conjugation on $V$ induces an involution on it. Note that for any
subgroup $H$ of $G$ containing $g$, $S(H, V)_g$ has the same
underlying space as $S(G, V)_g$.
\begin{proposition}
Let $V$ be an orthogonal $G-$representation. For any closed
subgroup $H\leqslant C_G(g)$, $S(G, V)_g$ satisfies
\begin{equation}S(G, V)_g^H\simeq\begin{cases}\mbox{pt},&\text{if $\langle
g\rangle \nleq
H$;}\\
\emptyset,&\text{if $\langle g\rangle\leqslant
H$.}\end{cases}\end{equation}\label{sfnew}
\end{proposition}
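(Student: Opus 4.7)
The plan is to exploit the fact that $H\leqslant C_G(g)$, so that $g$ commutes with every element of $H$ and $\langle H,g\rangle$ is a subgroup of $C_G(g)$. For an $H$-fixed vector $x\in Sym(V)$, being $g$-fixed is equivalent to being $\langle H,g\rangle$-fixed, which gives the clean identification
\[
S(G,V)_g^H \;=\; Sym(V)^H\setminus Sym(V)^{\langle H,g\rangle}.
\]
The case $\langle g\rangle\leqslant H$ is then immediate: $\langle H,g\rangle=H$, so the right-hand side is the complement of $Sym(V)^H$ in itself and therefore empty.

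Now suppose $\langle g\rangle\nleq H$, so $\langle H,g\rangle\neq H$. I would average to obtain a $G$-invariant inner product on $V$, so that each $Sym^n(V)$ carries a compatible $H$-invariant inner product. Let $U_n$ be the orthogonal complement of $Sym^n(V)^{\langle H,g\rangle}$ inside $Sym^n(V)^H$, and put $U := \bigoplus_n U_n$ with the colimit topology inherited from the increasing union $Sym^{\leq N}(V)$. The orthogonal decomposition gives a homeomorphism
\[
S(G,V)_g^H \;\cong\; Sym(V)^{\langle H,g\rangle}\times(U\setminus\{0\}),
\]
so contractibility reduces to that of $U\setminus\{0\}$. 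Provided $U$ is infinite-dimensional, $U\setminus\{0\}$ deformation retracts onto its unit sphere, which is a colimit of finite-dimensional spheres of unbounded dimension and is therefore contractible. Thus the proposition reduces to the single claim $\dim U = \infty$.

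This is the main (indeed only nontrivial) step, and it is where the ampleness of $V$ enters; the surrounding discussion makes clear that $Sym(V)$ is being used as a complete $H$-universe for every closed subgroup $H$. Since $\langle g\rangle\nleq H$ and $g$ centralizes $H$, the quotient $\langle H,g\rangle/H \cong \langle g\rangle/(\langle g\rangle\cap H)$ is a nontrivial finite cyclic group, and hence admits a nontrivial real irreducible representation whose inflation $\rho$ to $\langle H,g\rangle$ is trivial on $H$ but has no nonzero $g$-fixed vector, since the image of $g$ in the cyclic quotient is a generator and therefore acts as a nontrivial rotation or as $-1$.

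Ampleness of $V$ makes $Sym(V)$ a complete $\langle H,g\rangle$-universe, so $\rho$ embeds into $Sym(V)$ as an $\langle H,g\rangle$-subrepresentation infinitely many times; each copy lies inside $Sym(V)^H$ because $\rho|_H$ is trivial, and meets $Sym(V)^{\langle H,g\rangle}$ only at $0$ because $\rho^g = \{0\}$, so each copy lies in $U$. This forces $\dim U = \infty$. The only bit of care to take is that the deformation retraction onto $S^\infty$ is manifest in the colimit topology on $U$, but this is standard. I expect producing the nontrivial irreducible $\rho$ and extracting infinitely many mutually orthogonal copies of it inside $Sym(V)^H$ to be the only genuinely technical point; everything else is formal manipulation of fixed-point subspaces.
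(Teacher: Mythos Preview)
Your approach is essentially the same as the paper's: both decompose $S(G,V)_g^H$ as a product of a vector space with the complement of the origin in another vector space $U$, and argue that $U$ is infinite-dimensional so that $U\setminus\{0\}$ is contractible. The only cosmetic difference is the order of operations: the paper first takes the orthogonal complement $Sym^{n,\perp}$ of $Sym^n(V)^g$ in $Sym^n(V)$ and then passes to $H$-fixed points, whereas you first pass to $H$-fixed points and then take the complement of $Sym^n(V)^{\langle H,g\rangle}$; since $H\leqslant C_G(g)$ these yield the same subspace.

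Where your write-up genuinely improves on the paper is the justification that $\dim U=\infty$. The paper simply asserts ``As $n$ goes to infinity, $k_n$ goes to infinity'' without argument. Your construction of a nontrivial real irreducible of the finite cyclic quotient $\langle H,g\rangle/H$, inflated to a representation trivial on $H$ with no $g$-fixed vector, together with the completeness of $Sym(V)$ as an $\langle H,g\rangle$-universe, is exactly the missing step. You are also right to flag that this is where ampleness of $V$ is used, even though the statement of the proposition as written does not include that hypothesis; the surrounding text makes clear it is intended.
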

\begin{proof}
If $\langle g\rangle\leqslant H$, $Sym(V)^H$ is a subspace of
$Sym(V)^g$, so $(Sym(V)\setminus Sym(V)^g)^H$ is empty. If
$\langle g\rangle \nleq H$, $g$ is not in $H$. To simplify the
symbol, let $Sym^{n, \perp}$ denote the orthogonal complement of
$Sym^n(V)^g$ in $Sym^n(V)$.
\begin{align*}(Sym(V)\setminus Sym(V)^g)^H& =
colim_{n\longrightarrow\infty}Sym^n(V)^H\setminus (Sym^n(V)^g)^H\\
&= colim_{n\longrightarrow\infty} (Sym^n(V)^g)^H\times
\big((Sym^{n, \perp})^H\setminus\{0\} \big)\end{align*} Let $k_n$
denote the dimension of $(Sym^{n, \perp})^H$. Then $ (Sym^{n,
\perp})^H\setminus\{0\}\backsimeq S^{k_n-1}.$  As $n$ goes to
infinity, $k_n$ goes to infinity. When $k_n$ is large enough,
$S^{k_n-1}$ is contractible. So $(Sym(V)\setminus Sym(V)^g)^H$ is
contractible.
\end{proof}

\subsubsection{The construction of $F_g(G, V)$}
Next, we construct a space $F_g(G,
V)$ representing the theory $E^{V^g}_{\Lambda_G(g)}(-)$.

The faithful real $\Lambda_G(g)-$representation
constructed in Section \ref{realdef} serves as an essential component of the construction.
If $V$ is a faithful $G-$representation, %such that for each $g\in G^{tor}$ $V^g$ is a faithful $C_G(g)/\langle g\rangle-$representation.
by Proposition \ref{farithreallambda}, we have the
faithful $\Lambda_G(g)-$representation $(V)^{\mathbb{R}}_g$. In addition, $V^g$ can be
considered as a
$\Lambda_G(g)-$representation with trivial $\mathbb{R}-$action.
 The space $E((V)^{\mathbb{R}}_g\oplus V^g)$ represents
$E^{(V)^{\mathbb{R}}_g\oplus V^g}_{\Lambda_G(g)}(-)$. So we have
$$\mbox{Map}(S^{(V)^{\mathbb{R}}_g}, E((V)^{\mathbb{R}}_g\oplus
V^g))$$  represents $E^{V^g}_{\Lambda_G(g)}(-)$ since $$[X^g,
\mbox{Map}(S^{(V)^{\mathbb{R}}_g}, E((V)^{\mathbb{R}}_g\oplus
V^g))]^{\Lambda_G(g)} $$ is isomorphic to $$[X^g\wedge S^{(V)^{\mathbb{R}}_g},
E((V)^{\mathbb{R}}_g\oplus V^g)]^{\Lambda_G(g)}=
E^{(V)^{\mathbb{R}}_g\oplus V^g}_{\Lambda_G(g)}(X^g\wedge
S^{(V)^{\mathbb{R}}_g})= E^{V^g}_{\Lambda_G(g)}(X^g).$$

To simplify the symbol, we use $F_g(G, V)$ to denote the space
$$\mbox{Map}_{\mathbb{R}}(S^{(V)^{\mathbb{R}}_g},
E((V)^{\mathbb{R}}_g\oplus V^g)).$$ Its basepoint $c_0$ is the
constant map to the basepoint of
$E((V)^{\mathbb{R}}_g\oplus V^g)$.

 $F_g: (G, V)\mapsto F_g(G, V)$
provides a functor from $\mathcal{I}_{G}$ to the category
$C_G(g)\mathcal{T}$ of $C_G(g)-$spaces. It has the properties below.
 %For $F_g(G, V)$, we have the conclusions below.
\begin{proposition}
Let $G$ and $H$ be compact Lie groups. Let $V$ be a real
$G-$representation and $W$ a real $H-$representation. Let $g\in
G^{tors}$, $h\in H^{tors}$.\\ (i)  We have the unit
map $\eta_g(G, V): S^{V^g}\longrightarrow F_g(G, V)$ and the
multiplication
$$\mu^F_{(g, h)}((G, V), (H, W)): F_g(G, V)\wedge F_h(H,
W)\longrightarrow F_{(g, h)}(G\times H, V\oplus W)$$  making the
unit, associativity and centrality of unit diagram  commute. And
$\eta_g(G, V)$ is $C_G(g)-$equivariant and $\mu^F_{(g, h)}((G, V),
(H, W))$ is $C_{G\times H}(g, h)-$ equivariant. \\ (ii)Let
$\Delta_G$ denote the diagonal map $G\longrightarrow G\times
G,\mbox{      }g\mapsto (g, g).$ Let $\widetilde{\sigma}_g(G, V,
W): F_g(G, V)\longrightarrow \mbox{Map}(S^{W^g}, F_g(G, V\oplus
W))$ denote the map
$$x\mapsto (w\mapsto \big(\Delta^*_{G}\circ \mu^F_{(g, h)}((G, V), (G, W))\big)\big(x, \eta_g(G, W)(w)\big)).$$ Then $\widetilde{\sigma}_g(G, V, W)$
is a $\Lambda_G(g)-$weak equivalence when $V$ is an ample
$G-$representation. \\ (iii) If $(E, \eta^{E}, \mu^{E})$ is
commutative, we have
\begin{equation}\mu^F_{(g, h)}((G, V), (H, W))(x\wedge y)=
\mu^F_{(h, g)}((H, W), (G, V))(y\wedge x)
\label{quasicommFg}\end{equation} for any $x\in F_g(G, V)$ and
$y\in F_h(H, W)$.

\label{newF}
\end{proposition}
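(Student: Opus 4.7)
The plan is to transport the FSP structure $(\eta^E, \mu^E)$ of $E$ through the functorial construction $F_g$, exploiting that the assignment $V \mapsto (V)^{\mathbb{R}}_g \oplus V^g$ is additive in $V$ and compatible with external direct sums. In particular, for $(g, h) \in G \times H$ one extracts from the construction of $(-)^{\mathbb{R}}_{g}$ in Appendix \ref{reallambda} canonical isomorphisms $(V \oplus W)^{(g,h)} \cong V^g \oplus W^h$ and $(V \oplus W)^{\mathbb{R}}_{(g,h)} \cong (V)^{\mathbb{R}}_g \oplus (W)^{\mathbb{R}}_h$ of $\Lambda_{G \times H}(g, h)$-representations. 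For part (i), I would define $\eta_g(G, V)(s) \colon w \mapsto \eta^E_{(V)^{\mathbb{R}}_g \oplus V^g}(w \wedge s)$, which is $\mathbb{R}$-equivariant because $\mathbb{R}$ acts trivially on $V^g$ and $\eta^E$ is $\Lambda_G(g)$-equivariant; the $C_G(g)$-equivariance is similar. The multiplication $\mu^F_{(g,h)}$ sends $f \wedge f'$ to $\mu^E \circ (f \wedge f')$, pre- and post-composed with the canonical identifications above so that the result lies in $\mbox{Map}_{\mathbb{R}}(S^{(V\oplus W)^{\mathbb{R}}_{(g,h)}}, E((V\oplus W)^{\mathbb{R}}_{(g,h)} \oplus (V\oplus W)^{(g,h)}))$. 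The unit, associativity, and centrality-of-unit diagrams for $(F_g, \eta_g, \mu^F)$ then reduce, via the construction, to the corresponding diagrams for $(E, \eta^E, \mu^E)$, which commute by hypothesis.

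For part (ii), I would unwind the definition of $\widetilde{\sigma}_g(G, V, W)$ and observe that it is essentially the adjoint structure map $\widetilde{\sigma}^E_{(V)^{\mathbb{R}}_g \oplus V^g,\, (W)^{\mathbb{R}}_g \oplus W^g}$ of $E$, precomposed with the canonical splitting of the direct sum and postcomposed with restriction to the $S^{W^g}$-factor. The hypothesis on $E$ then gives that $\widetilde{\sigma}^E$ is a $\Lambda_G(g)$-weak equivalence provided that $(V)^{\mathbb{R}}_g \oplus V^g$ is an ample $\Lambda_G(g)$-representation, so the key auxiliary lemma is: if $V$ is ample for $G$, then $(V)^{\mathbb{R}}_g \oplus V^g$ is ample for $\Lambda_G(g)$. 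I expect this to follow from the explicit construction in Appendix \ref{reallambda}, since $(V)^{\mathbb{R}}_g$ is designed to be a faithful $\Lambda_G(g)$-representation carrying the non-trivial $\mathbb{R}$-weights while $V^g$ supplies the trivial $\mathbb{R}$-weight together with all irreducibles of $C_G(g)$ that fix $g$.

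Part (iii) is then automatic: if $\mu^E$ is commutative, the defining formula for $\mu^F$ treats its two arguments symmetrically, so after composing with the canonical swap isomorphisms on the relevant $\Lambda$-representations one obtains (\ref{quasicommFg}). The main obstacle is the bookkeeping surrounding these compatibility isomorphisms and, especially, establishing the ampleness transfer $V \leadsto (V)^{\mathbb{R}}_g \oplus V^g$ needed in (ii); both hinge on the explicit construction of the faithful real $\Lambda_G(g)$-representation given in Appendix \ref{reallambda}.
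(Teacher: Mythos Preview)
Your proposal is correct and follows essentially the same route as the paper: the unit and multiplication on $F_g$ are defined exactly as you indicate (the paper writes $\eta_g(G,V)(v)(v')=\eta^E_{(V)^{\mathbb R}_g\oplus V^g}(v\wedge v')$ and $\mu^F_{(g,h)}(\alpha\wedge\beta)(v\wedge w)=\mu^E(\alpha(v)\wedge\beta(w))$), the diagrams are reduced to those of $(E,\eta^E,\mu^E)$, and commutativity of $\mu^F$ follows immediately from that of $\mu^E$.

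The one place where the paper is a bit lighter than you anticipate is part~(ii). You frame the key auxiliary fact as an ``ampleness transfer'' $V\leadsto (V)^{\mathbb R}_g\oplus V^g$, which you expect to extract from Appendix~\ref{reallambda}. The paper does not prove ampleness of $(V)^{\mathbb R}_g\oplus V^g$; it only invokes \emph{faithfulness}, which is exactly Proposition~\ref{farithlambda}, and then appeals directly to the assumed weak-equivalence property of $\widetilde{\sigma}^E$ (Theorem~\ref{KUGweak} in the $KU$ case). So the obstacle you single out is handled by a result already recorded in the appendix, and no separate ampleness argument is carried out. Otherwise your unwinding of $\widetilde{\sigma}_g(G,V,W)$ into $\widetilde{\sigma}^E$ followed by taking $\mbox{Map}(S^{(V)^{\mathbb R}_g},-)$ and rearranging the mapping spaces matches the paper's argument line for line.
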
 %remove the proof?

%The proof is straightforward and left to the readers.

\begin{proof}
(i) Let $V_1$ and $V_2$ be orthogonal $G-$representations and $f:
V_1\longrightarrow V_2$ be a linear isometric isomorphism. $f$
gives the linear isometric isomorphisms %between the $\Lambda_G(g)-$representations
$f_1: (V_1)^{\mathbb{R}}_g\longrightarrow (V_2)^{\mathbb{R}}_g$,
and $f_2: (V_1)^{\mathbb{R}}_g\oplus V_1^g\longrightarrow
(V_2)^{\mathbb{R}}_g\oplus V_2^g$. Then define $F_g(f):
F_g(V_1)\longrightarrow F_g(V_2)$ in this way: for any
$\mathbb{R}-$equivariant map $\alpha:
S^{(V_1)^{\mathbb{R}}_g}\longrightarrow
E((V_1)^{\mathbb{R}}_g\oplus V_1^g)$, $F_g(f)(\alpha)$ is the
composition
\begin{equation}S^{(V_2)^{\mathbb{R}}_g}\buildrel{S(f_1^{-1})}\over\longrightarrow  S^{(V_1)^{\mathbb{R}}_g}\buildrel{\alpha}\over\longrightarrow E((V_1)^{\mathbb{R}}_g\oplus V_1^g)
\buildrel{E(f_2)}\over\longrightarrow E((V_2)^{\mathbb{R}}_g\oplus
V_2^g)\end{equation} which is still $\mathbb{R}-$equivariant. It
is straightforward to check $F_g(Id)$ is the identity map, and for
morphisms $V_1\buildrel{f}\over\longrightarrow
V_2\buildrel{f'}\over\longrightarrow  V_3$ in $\mathcal{I}_G$, we
have $F_g(f'\circ f)=F_g(f')\circ F_g(f).$

(ii) Define the unit map $\eta_g(G, V): S^{V^g}\longrightarrow
F_g(G, V)$ by \begin{equation}v\mapsto (v'\mapsto
\eta^E_{(V)^{\mathbb{R}}_g\oplus V^g}(v\wedge v'))
\end{equation} where $\eta^E_{(V)^{\mathbb{R}}_g\oplus V^g}:
S^{(V)^{\mathbb{R}}_g\oplus V^g}\longrightarrow
E((V)^{\mathbb{R}}_g\oplus V^g)$ is the unit map for global
$E$-theory. Since $(V)^{\mathbb{R}}_g\oplus V^g$ is a
$\Lambda_G(g)-$representation, $\eta^E_{(V)^{\mathbb{R}}_g\oplus
V^g}$ is $\Lambda_G(g)-$equivariant. So $\eta_g(G, V)$ is
well-defined and $\Lambda_G(g)-$equivariant.

Define the multiplication $\mu^F_{(g, h)}((G, V), (H, W)): F_g(G,
V)\wedge F_h(H, W)\longrightarrow F_{(g, h)}(G\times H, V\oplus
W)$ by \begin{equation}\alpha\wedge\beta\mapsto (v\wedge w\mapsto
\mu^E_{V, W}(\alpha(v)\wedge\beta(w)))\label{muFKg}\end{equation}
where $\mu^E_{V, W}$ is the multiplication for global $E-$theory.
Since $\mu^E_{V, W}$ is $\Lambda_{G}(g)\times
\Lambda_{H}(h)-$equivariant, $\mu^F_{(g, h)}((G, V), (H, W))$ is
$C_{G\times H}(g, h)-$equivariant. It is straightforward to check
the unit map and multiplication make the unit, associativity and
centrality of unit diagram commute.

(iii) Since $V$ is a faithful $G-$representation, by Proposition
\ref{farithlambda}, $(V)^{\mathbb{R}}_g\oplus V^g$ is a faithful
$\Lambda_G(g)-$representation.  By Theorem \ref{KUGweak}, we have
the $\Lambda_G(g)-$weak equivalence $E((V)^{\mathbb{R}}_g\oplus
V^g)\buildrel{\widetilde{\sigma}^E}\over\longrightarrow
\mbox{Map}(S^{(W)^{\mathbb{R}}_g\oplus W^g}, E((V\oplus
W)^{\mathbb{R}}_g\oplus (V\oplus W)^g))$ where
$\widetilde{\sigma}^E$ is the right adjoint of the structure map
of $E$. Thus we have the $\Lambda_G(g)-$weak equivalence
\begin{align*}\mbox{Map}(S^{(V)^{\mathbb{R}}_g}, E((V)^{\mathbb{R}}_g\oplus
V^g))&\longrightarrow \mbox{Map}(S^{(V)^{\mathbb{R}}_g},
\mbox{Map}(S^{(W)^{\mathbb{R}}_g\oplus W^g}, E((V\oplus
W)^{\mathbb{R}}_g\oplus (V\oplus W)^g)))\\ &=\mbox{Map}(S^{W^g},
\mbox{Map}(S^{(V\oplus W)^{\mathbb{R}}_g}, E((V\oplus
W)^{\mathbb{R}}_g\oplus (V\oplus W)^g))),\end{align*} i.e. $F_g(G,
V)\backsimeq_{C_G(g)}\mbox{Map}(S^{W^g}, F_g(G, V\oplus W)).$

\bigskip

(iv) (\ref{quasicommFg}) comes directly from the commutativity of
 $E$.
\end{proof}

\subsubsection{The construction of $QE(G, V)$}
\label{qegcon}Recall in Theorem \ref{Emain} we  construct a $G-$space
$QE_{G, n}$ representing $QE^n_G(-)$ in $GwT$. With $F_g(G, V)$ and $S(G,
V)_g$ we can go further than that.

Apply Theorem \ref{KUM}, we get the conclusion below.
\begin{proposition}
Let $V$ be a faithful orthogonal $G-$representation. Let $B'(G,
V)$ denote the space $$\prod_{g\in
G^{tors}_{conj}}\mbox{Map}_{C_G(g)}(G, F_g(G, V)\ast S(G, V)_g).$$
$QE^V_G(-)$ is  represented by $B'(G, V)$
in $GwT$. \label{quickEFS}
\end{proposition}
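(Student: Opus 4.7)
The plan is to apply Theorem \ref{KUM} separately to each factor indexed by $g \in G^{tors}_{conj}$, and thereby to reduce the statement to showing that $\mbox{Map}_{C_G(g)}(G, F_g(G, V) \ast S(G, V)_g)$ represents the functor $X \mapsto E^{V^g}_{\Lambda_G(g)}(X^g)$ in $GwT$. Taking the product over conjugacy classes then yields
\begin{equation*}
\pi_0(B'(G, V)) = \prod_{g \in G^{tors}_{conj}} E^{V^g}_{\Lambda_G(g)}(S^0) = QE^V_G(S^0),
\end{equation*}
which is the representability asserted.

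First I would record that $F_g(G, V)$ represents $E^{V^g}_{\Lambda_G(g)}(-)$ on $\Lambda_G(g)$-spaces of the form $X^g$ (which carry the trivial $\mathbb{R}$-action), since by the construction of $F_g(G, V)$ one has
\begin{equation*}
[X^g, F_g(G, V)]^{\Lambda_G(g)} \cong [X^g \wedge S^{(V)^{\mathbb{R}}_g}, E((V)^{\mathbb{R}}_g \oplus V^g)]^{\Lambda_G(g)} = E^{V^g}_{\Lambda_G(g)}(X^g).
\end{equation*}
Next I would check that $S(G, V)_g$ is interchangeable with $S_{C_G(g), g}$: Proposition \ref{sfnew} and Lemma \ref{sf} give matching fixed-point descriptions, namely contractible when $\langle g \rangle \not\leq H$ and empty when $\langle g \rangle \leq H$ for closed $H \leq C_G(g)$, so by Elmendorf's theorem \ref{elem} the two are $C_G(g)$-homotopy equivalent. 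Substituting $S(G, V)_g$ for $S_{C_G(g), g}$ in the formula of $\mathcal{R}_g$ therefore does not alter its $C_G(g)$-homotopy type.

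With these identifications in hand, the proof of Theorem \ref{KUM} applies essentially verbatim, with $F_g(G, V)$ playing the role of $Y^{\mathbb{R}}$ and $S(G, V)_g$ that of $S_{C_G(g), g}$. For any closed $H \leq G$, a $C_G(g)$-map $f \colon (G/H)^g \to F_g(G, V)$ extends, uniquely up to $C_G(g)$-homotopy, to a $C_G(g)$-map $\tilde{f} \colon G/H \to F_g(G, V) \ast S(G, V)_g$ by equivariant obstruction theory, since at any isotropy subgroup $K$ of $G/H$ not containing $\langle g \rangle$ the fixed points $(F_g(G, V) \ast S(G, V)_g)^K$ are contractible; moreover, the restriction of $\tilde{f}$ to $(G/H)^g$ necessarily lands in $F_g(G, V)$ because $S(G, V)_g^g = \emptyset$. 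Combined with the adjunction $\mbox{Map}_{C_G(g)}(G/H, -) \cong \mbox{Map}_G(G/H, \mbox{Map}_{C_G(g)}(G, -))$, this identifies the $H$-fixed points of the $g$-th factor of $B'(G, V)$ with $E^{V^g}_{\Lambda_G(g)}((G/H)^g)$. Proposition \ref{keyl} extends the conclusion from orbits to all $G$-CW complexes, and the product over $g$ completes the argument.

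The main obstacle is the bookkeeping required to reconcile the equivariant structures on $F_g(G, V)$ with the space $Y^{\mathbb{R}}$ that appears in Theorem \ref{KUM}. Because $F_g(G, V)$ is defined as a space of $\mathbb{R}$-equivariant maps, its $\mathbb{R}$-action has already been removed by construction, and one must verify carefully that this makes it compatible with the join construction, the $C_G(g)$-action used in the adjoint formula, and the faithful $\Lambda_G(g)$-representation $(V)^{\mathbb{R}}_g$ from Appendix \ref{reallambda}. Once this identification is in place, the remaining steps reduce to the obstruction-theoretic machinery already deployed in the proofs of Theorems \ref{main} and \ref{KUM}.
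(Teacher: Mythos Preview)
Your proposal is correct and follows essentially the same approach as the paper, which simply states that the proof is analogous to that of Theorem~\ref{KUmain} step by step. You have faithfully unpacked those steps: identifying $F_g(G,V)$ with the role of $Y^{\mathbb{R}}$ in Theorem~\ref{KUM}, justifying the replacement of $S_{C_G(g),g}$ by $S(G,V)_g$ via their matching fixed-point data (Lemma~\ref{sf} and Proposition~\ref{sfnew}), and then invoking the obstruction-theoretic argument and Proposition~\ref{keyl} exactly as in the proofs of Theorems~\ref{main} and~\ref{KUM}.
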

The proof of Proposition \ref{quickEFS} is analogous to that of
Theorem \ref{KUmain} step by step.

One disadvantage of $\{B'(G,
V)\}_V$ is that it is not easy to see whether we can construct the structure maps to make it an orthogonal $G-$spectrum.
Instead, we consider the $G-$weak
equivalent spaces $\{QE(G, V)\}_V$ in Proposition \ref{QEinal}.

Below is the main theorem in Section \ref{anotherweaksp}. We will
use formal linear combination $$t_1a+t_2b\mbox{ with }0\leqslant
t_1, t_2\leqslant 1, t_1+t_2=1$$ to denote points in join, as
talked in Appendix \ref{join}.

\begin{proposition}Let $QE_g(G, V)$ denote $$\{t_1a+t_2b\in F_g(G, V)\ast S(G, V)_g| \|b\|\leqslant t_2\}/\{t_1c_0+t_2
b\}.
$$ It is the quotient space of a closed subspace of  the join $F_g(G, V)\ast S(G,
V)_g$ with all the points of the form $t_1c_0+t_2 b$ collapsed to
one point, which we pick as the basepoint of $QE_g(G, V)$, where
$c_0$ is the basepoint of $F_g(G, V)$. $QE_g(G, V)$ has the
evident $C_G(g)-$action. And it is $C_G(g)-$weak equivalent to
$F_g(G, V)\ast S(G, V)_g$. As a result, $\prod\limits_{g\in
G^{tors}_{conj}}\mbox{Map}_{C_G(g)}(G, QE_g(G, V))$ is $G-$weak
equivalent to $B'(G,
V)$. So
\begin{equation}QE(G, V):=\prod\limits_{g\in
G^{tors}_{conj}}\mbox{Map}_{C_G(g)}(G, QE_g(G, V))
\label{EGVnog}\end{equation}  represents $QE^V_G(-)$ in the category $GwT$. \label{QEinal}
\end{proposition}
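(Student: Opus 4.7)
The plan is to establish the $C_G(g)$-weak equivalence $QE_g(G,V) \simeq F_g(G,V) \ast S(G,V)_g$ for each representative $g$, transfer this through the induction functor $\mathrm{Map}_{C_G(g)}(G,-)$ and the finite product, and finally invoke Proposition \ref{quickEFS} to identify the represented theory. Write $C := \{t_1 a + t_2 b \in F_g(G,V) \ast S(G,V)_g : \|b\| \leq t_2\}$ for the closed subspace in question and $K := \{t_1 c_0 + t_2 b\} \subseteq C$ for the locus being collapsed, so that by construction $QE_g(G,V) = C/K$.

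I would first exhibit $C$ as a $C_G(g)$-equivariant strong deformation retract of the full join. Define the retraction $r(t_1 a + t_2 b) = t_1 a + t_2 b'$ with $b' = b \cdot \min(1, t_2/\|b\|)$. Radial rescaling by a positive scalar preserves membership in $S(G,V)_g = \mathrm{Sym}(V) \setminus \mathrm{Sym}(V)^g$ because $\mathrm{Sym}(V)^g$ is a linear subspace, and the map is continuous on the join (including at the ends $t_i = 0$, where the rescaling factor tends to $1$ appropriately) and $C_G(g)$-equivariant since norms and scalars commute with the action. Linearly interpolating the rescaling parameter produces an equivariant homotopy from $\mathrm{id}$ to the inclusion $i \circ r$. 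Second, I would observe that $K$ is the join of the single point $\{c_0\}$ with $S(G,V)_g$, hence a cone which contracts equivariantly toward the $C_G(g)$-fixed apex $c_0$; moreover the pair $(C, K)$ is an equivariant NDR pair (the join carries a standard collar of $K$ inside $C$), so the quotient map $C \twoheadrightarrow C/K$ is a $C_G(g)$-equivariant homotopy equivalence. Composing the deformation retract with the collapse gives the desired $C_G(g)$-weak equivalence
\begin{equation*}
QE_g(G,V) \simeq_{C_G(g)} F_g(G,V) \ast S(G,V)_g.
\end{equation*}

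For the passage to $G$-spaces, the functor $\mathrm{Map}_{C_G(g)}(G,-)$ sends $C_G(g)$-weak equivalences between well-pointed spaces to $G$-weak equivalences, because $\mathrm{Map}_{C_G(g)}(G,X)^H = \mathrm{Map}_{C_G(g)}(G/H, X)$ and $G/H$ is a $C_G(g)$-CW complex for every closed $H \leq G$. Taking the product over $G^{tors}_{conj}$ preserves $G$-weak equivalences, so
\begin{equation*}
QE(G,V) = \prod_{g \in G^{tors}_{conj}} \mathrm{Map}_{C_G(g)}(G, QE_g(G,V)) \simeq_G B'(G,V).
\end{equation*}
By Proposition \ref{quickEFS} the right-hand side represents $QE^V_G(-)$ in $GwT$, i.e.\ its $\pi_0$ computes $QE^V_G(S^0)$ naturally, and since $\pi_0$ is invariant under $G$-weak equivalence, so does $QE(G,V)$, as required by Definition \ref{gwtdef}. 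The main technical obstacle I expect is the careful verification that $r$ extends continuously at the collapsed ends of the join and that $(C,K)$ is a genuine equivariant NDR pair; these are essentially CW-topological bookkeeping items, but they must be checked precisely because the entire equivalence on the left rests on the contractibility of $K$ being realized homotopically inside $C$, rather than merely up to a zig-zag of weak maps.
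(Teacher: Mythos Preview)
Your argument is correct and its overall architecture matches the paper's: first deformation-retract the full join onto the subspace $C=QE'_g(G,V)$ via the radial rescaling $b\mapsto b\cdot\min(1,t_2/\|b\|)$ (this is exactly the paper's map $p$ and homotopy $H$), then argue that collapsing $K$ does not change the $C_G(g)$-weak homotopy type, and finally push through $\mathrm{Map}_{C_G(g)}(G,-)$ and the product.

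The one genuine difference is in how you handle the collapse $C\to C/K$. You invoke a general cofibration principle: $K$ is an equivariantly contractible sub-NDR, hence the quotient map is a $C_G(g)$-homotopy equivalence. The paper instead does a bare-hands fixed-point analysis, splitting on whether $g\in H$ for each closed $H\le C_G(g)$: when $g\in H$, Proposition~\ref{sfnew} gives $S(G,V)_g^H=\emptyset$, so both $C^H$ and $(C/K)^H$ collapse to $F_g(G,V)^H$ and $q^H$ is the identity; when $g\notin H$, the same proposition makes $S(G,V)_g^H$ contractible, forcing both $C^H$ and $(C/K)^H$ to be contractible. Your route is cleaner and yields a genuine equivariant homotopy equivalence rather than merely a weak one, at the price of the NDR verification you flag (which in turn needs $(F_g(G,V),c_0)$ to be well-pointed); the paper's route avoids any point-set hypothesis on $F_g(G,V)$ by leaning on the explicit fixed-point dichotomy of $S(G,V)_g$, but only produces a weak equivalence. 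One small correction: $K$ is not literally the full cone $\{c_0\}\ast S(G,V)_g$ but its intersection with $C$, i.e.\ the locus $\|b\|\le t_2$ inside that cone; this does not affect your contractibility claim, since the same radial/cone contraction toward $c_0$ (rescaling $b$ along with $t_2$) stays inside $K$.
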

\begin{proof}
First we show $F_g(G, V)\ast S(G, V)_g$ is $C_G(g)-$homotopy
equivalent to $$QE'_g(G, V):=\{t_1a+t_2b\in F_g(G, V)\ast S(G,
V)_g| \|b\|\leqslant t_2\}.$$

Note that $b\in S(G, V)_g$ is never zero. Let $j: QE'_g(G,
V)\longrightarrow F_g(G, V)\ast S(G, V)_g$ be the inclusion. Let
$p: F_g(G, V)\ast S(G, V)_g\longrightarrow QE'_g(G, V)$ be the
$C_G(g)-$map sending $t_1a+t_2b$ to $t_1a+t_2\frac{min\{\|b\|,
t_2\}}{\|b\|}b$. Both $j$ and $p$ are both continuous and
$C_G(g)-$equivariant. $p\circ j$ is the identity map of $QE'_g(G,
V)$. We can define a $C_G(g)-$homotopy
$$H: (F_g(G, V)\ast S(G, V)_g)\times I\longrightarrow F_g(G,
V)\ast S(G, V)_g$$ from the identity map on $F_g(G, V)\ast S(G,
V)_g$ to $j\circ p$ by shrinking. For any $t_1a+t_2b\in F_g(G,
V)\ast S(G, V)_g$, Define
\begin{equation}H(t_1a+t_2b, t):= t_1a+t_2((1-t)b+t\frac{min\{\|b\|, t_2\}}{\|b\|}b). \end{equation}

Then we show $QE'_g(G, V)$ is $G-$weak equivalent to $QE_g(G, V)$.
Let $q: QE'_g(G, V)\longrightarrow QE_g(G, V)$ be the quotient
map. Let $H$ be a closed subgroup of $C_G(g)$.

If $g$ is in $H$, since $S(G, V)_g^H$ is empty, so $QE_g(G, V)^H$
is in the end $F_g(G, V)$ and can be identified with $F_g(G,
V)^H$. In this case
 $q^H$ is the identity map.

If $g$ is not in $H$, $QE'_g(G, V)^H$ is contractible. The cone
$\{c_0\}\ast S(G, V)_g^H$ is contractible, so $q\big((\{c_0\}\ast
S(G, V)_g)^H\big)=q(\{c_0\}\ast S(G, V)_g^H)$ is contractible.
Note that the subspace of all the points of the form $t_1c_0+t_2b$
for any $t_1$
 and $b$  is $q\big((\{c_0\}\ast S(G, V)_g)^H\big)$. Therefore, $QE_g(G, V)^H=QE'_g(G, V)^H/q(\{c_0\}\ast S(G,
 V)_g)^H$ is contractible.

Therefore, $QE'_g(G, V)$ is $G-$weak equivalent to $F_g(G, V)\ast
S(G, V)_g$.
\end{proof}

Moreover, generalizing the construction in Proposition \ref{QEinal}, we have the conclusion below on homotopical right adjoints.
\begin{proposition}
Let $g\in G^{tors}$. Let $Y$ be a based $\Lambda_G(g)-$space.  Let
$\widetilde{Y}_g$ denote the $C_G(g)-$space
$$\{t_1a+t_2b\in Y^{\mathbb{R}}\ast S(G, V)_g| \|b\|\leqslant
t_2\}/\{t_1y_0+t_2 b\}.
$$ It is the quotient space of a closed subspace of $Y^{\mathbb{R}}\ast S(G,
V)_g$ with all the points of the form     $t_1y_0+t_2 b$
  collapsed to one point, i.e the basepoint of $\widetilde{Y}_g$,
where $y_0$ is the basepoint of $Y$. $\widetilde{Y}_g$ is
$C_G(g)-$weak equivalent to $Y^{\mathbb{R}}\ast S(G, V)_g$. As a
result, the functor $R_g: C_G(g)\mathcal{T}\longrightarrow
G\mathcal{T}$ with $R_g\widetilde{Y}=\mbox{Map}_{C_G(g)}(G,
\widetilde{Y}_g)$ is a homotopical right adjoint of $L:
G\mathcal{T}\longrightarrow C_G(g)\mathcal{T}\mbox{, }X\mapsto
X^g$.

\label{generalYS}
\end{proposition}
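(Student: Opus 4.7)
The plan is to mirror the argument of Proposition \ref{QEinal} in this more general setting with an arbitrary based $\Lambda_G(g)$-space $Y$ in place of $F_g(G,V)$, and then to combine the resulting $C_G(g)$-weak equivalence with Theorem \ref{KUM} to identify $R_g$ as a homotopical right adjoint.

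First I will show that the intermediate space
$$QE'_Y := \{t_1 a + t_2 b \in Y^{\mathbb{R}} \ast S(G, V)_g \mid \|b\| \leq t_2\}$$
is $C_G(g)$-homotopy equivalent to the full join $Y^{\mathbb{R}} \ast S(G, V)_g$. Exactly as in Proposition \ref{QEinal}, the $C_G(g)$-equivariant inclusion admits a shrinking retraction
$$t_1 a + t_2 b \mapsto t_1 a + t_2 \frac{\min\{\|b\|, t_2\}}{\|b\|} b,$$
which is well-defined because every $b \in S(G, V)_g$ is nonzero, and the straight-line homotopy witnesses the equivalence.

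Next I would verify that the quotient map $q: QE'_Y \to \widetilde{Y}_g$ collapsing $\{t_1 y_0 + t_2 b\}$ is a $C_G(g)$-weak equivalence by a case split on each closed subgroup $H \leq C_G(g)$. If $g \in H$, Proposition \ref{sfnew} gives $S(G, V)_g^H = \emptyset$, so the $H$-fixed points of $QE'_Y$ lie entirely in the end $Y^{\mathbb{R}}$ and the collapsed subspace meets them only at the basepoint, making $q^H$ the identity on $(Y^{\mathbb{R}})^H$. If $g \notin H$, the collapsed subspace on $H$-fixed points equals $q(\{y_0\} \ast S(G, V)_g^H)$, which is the image of a cone and hence contractible; and $(QE'_Y)^H$ is itself contractible by Proposition \ref{sfnew}, so $\widetilde{Y}_g^H$ remains contractible.

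Combining these two steps gives $\widetilde{Y}_g \simeq_{C_G(g)} Y^{\mathbb{R}} \ast S(G, V)_g$. Since $S(G, V)_g$ and $S_{C_G(g), g}$ are classified up to $C_G(g)$-homotopy by the same fixed-point recipe (compare Lemma \ref{sf} with Proposition \ref{sfnew}), the join is further $C_G(g)$-weak equivalent to $Y^{\mathbb{R}} \ast S_{C_G(g), g}$, and applying $\mbox{Map}_{C_G(g)}(G, -)$ and invoking Theorem \ref{KUM} identifies $R_g Y = \mbox{Map}_{C_G(g)}(G, \widetilde{Y}_g)$ with a homotopical right adjoint of the fixed point functor. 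The main obstacle is the quotient step: one must ensure $\{t_1 y_0 + t_2 b\}$ is a sufficiently nice $C_G(g)$-equivariant cofibration inside $QE'_Y$ so that the quotient by a contractible subspace is a weak equivalence; this should follow from the standard CW structure on join constructions recalled in Appendix \ref{join}, provided $Y$ is assumed well-pointed.
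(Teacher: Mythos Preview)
Your proposal is correct and takes essentially the same approach as the paper. The paper records only that the proof is analogous to that of Theorem \ref{KUM}; in context this means the weak-equivalence claim is handled by repeating the two-step argument of Proposition \ref{QEinal} (shrinking retraction onto $QE'_Y$, then the fixed-point case split on whether $g\in H$), after which the homotopical-right-adjoint claim reduces to Theorem \ref{KUM} via the identical fixed-point characterization of $S(G,V)_g$ and $S_{C_G(g),g}$ --- precisely what you outline. Your remark about needing the basepoint inclusion to be a $C_G(g)$-cofibration is a fair caveat that the paper glosses over in Proposition \ref{QEinal} as well.
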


The proof %of Proposition \ref{generalYS}
is analogous to that of
Theorem \ref{KUM}.

\begin{remark}
We can consider $QE_g(G, V)$  as a quotient space of a subspace of
$F_g(G, V)\times Sym(V)\times I$  \begin{equation}\{(a, b, t)\in
F_g(G, V)\times Sym(V)\times I|  \|b\|\leqslant t; \mbox{   and }
b\in S(G, V)_g\mbox{   if   }t\neq
0\}\label{qimiaoanoano}\end{equation} by identifying points $(a,
b, 1)$ with $(a', b, 1)$, and collapsing all the points $(c_0, b,
t)$ for any $b$ and $t$. In other words, the end $F_g(G, V)$ in
the join $F_g(G, V)\ast S(G, V)_g$ is identified with the points
of the form $(a, 0, 0)$ in (\ref{qimiaoanoano}).
\label{ananotherEll}
\end{remark}

\begin{proposition}For each $g\in G^{tors}$, $$QE_g: \mathcal{I}_G\longrightarrow C_G(g)\mathcal{T},\mbox{   } (G, V)\mapsto QE_g(G, V)$$ is a well-defined functor.
As a result, $$QE: \mathcal{I}_G\longrightarrow
G\mathcal{T},\mbox{ } (G, V)\mapsto \prod\limits_{g\in
G^{tors}_{conj}}\mbox{Map}_{C_G(g)}(G, QE_g(G, V))$$ is a
well-defined functor. \end{proposition}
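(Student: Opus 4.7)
The plan is to verify functoriality of $QE_g$ by assembling the two functorialities already in hand: that of $F_g$ (Proposition \ref{newF}(i)) and that of the symmetric algebra $Sym(-)$. Let $f\colon V_1\to V_2$ be a morphism in $\mathcal{I}_G$, i.e.\ a $G$-equivariant linear isometric isomorphism. By Proposition \ref{newF}(i), $f$ induces a $C_G(g)$-equivariant map $F_g(f)\colon F_g(G,V_1)\to F_g(G,V_2)$ that preserves the basepoint $c_0$. Applying the symmetric algebra functor levelwise we obtain a $G$-equivariant linear isometric isomorphism $Sym(f)\colon Sym(V_1)\to Sym(V_2)$ which restricts to a homeomorphism $S(G,V_1)_g\to S(G,V_2)_g$, because an isometry sends the fixed subspace $Sym(V_1)^g$ to $Sym(V_2)^g$ bijectively. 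In particular $Sym(f)$ preserves norm on each symmetric power.

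Now I would form the induced map on joins
\begin{equation}
F_g(f)\ast Sym(f)\colon F_g(G,V_1)\ast S(G,V_1)_g\longrightarrow F_g(G,V_2)\ast S(G,V_2)_g
\end{equation}
and restrict it to $QE_g(G,V_1)$. The subspace condition $\|b\|\leqslant t_2$ is preserved because $Sym(f)$ is an isometry, so $\|Sym(f)(b)\|=\|b\|$. The collapsed subspace $\{t_1c_0+t_2b\}$ maps into the corresponding collapsed subspace in $V_2$ because $F_g(f)(c_0)=c_0'$; hence the map descends to a well-defined continuous based map $QE_g(f)\colon QE_g(G,V_1)\to QE_g(G,V_2)$. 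Equivariance under $C_G(g)$ is inherited from the equivariance of $F_g(f)$ and $Sym(f)$ (the latter is $G$-equivariant, hence $C_G(g)$-equivariant, and the join and quotient constructions are equivariant).

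To conclude that $QE_g$ is a functor, I would check the identity and composition axioms. For the identity, $F_g(Id_V)=Id$ and $Sym(Id_V)=Id$, so $QE_g(Id_V)=Id$. For composition, the identities $F_g(f'\circ f)=F_g(f')\circ F_g(f)$ (Proposition \ref{newF}(i)) and $Sym(f'\circ f)=Sym(f')\circ Sym(f)$ together with the fact that $\ast$ and the quotient are functorial give $QE_g(f'\circ f)=QE_g(f')\circ QE_g(f)$. The only even mildly delicate point is the compatibility with the quotient identification, but this follows formally from the fact that the collapsed loci are natural in $V$.

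For the second assertion, the functor $\mathrm{Map}_{C_G(g)}(G,-)\colon C_G(g)\mathcal{T}\to G\mathcal{T}$ is standard, and the product over the finite-or-compact index set $G^{tors}_{conj}$ is also functorial; so $QE$ is obtained as a composite of functors, hence a functor itself. The main thing I would have to be careful about is that the composite really is a functor into $G\mathcal{T}$, which is automatic since $\mathrm{Map}_{C_G(g)}(G,Z)$ carries a natural $G$-action by right translation on $G$. I do not expect any serious obstacle: the statement is essentially a bookkeeping consequence of functoriality of $F_g$, $Sym$, join, quotients, mapping spaces, and products.
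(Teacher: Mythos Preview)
Your argument is essentially the paper's own proof: define $QE_g(f)$ as the map induced on the quotient by the join $F_g(f)\ast S_g(f)$ (with $S_g(f)$ the restriction of $Sym(f)$), verify the identity and composition axioms from those for $F_g$ and $Sym$, and then obtain $QE$ by post-composing with $\mathrm{Map}_{C_G(g)}(G,-)$ and taking the product. One small slip to flag: by the paper's definition of $\mathcal{I}_G$, a morphism $f$ is an arbitrary linear isometric isomorphism, not necessarily $G$-equivariant, so your justification that ``an isometry sends the fixed subspace $Sym(V_1)^g$ to $Sym(V_2)^g$'' does not follow from isometry alone---though the paper's proof treats $F_g(f)$ and $S_g(f)$ as $C_G(g)$-homeomorphisms in exactly the same way without further comment.
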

\begin{proof}
Let $V$ and $W$ be $G-$ representations and $f: V\longrightarrow
W$ a linear isometric isomorphism. Then $f$ induces a
$C_G(g)-$homeomorphism $F_g(f)$ from $F_g(G, V)$ to $F_g(G, W)$
and a $C_G(g)-$homeomorphism $S_g(f)$ from $S(G,V)_g$  to
$S(G,W)_g$. We have the well-defined map
$$QE_g(f): QE_g(G, V)\longrightarrow QE_g(G, W)$$ sending a point
represented by $t_1a+t_2b$ in the join to that represented by
$t_1F_g(f)(a)+t_2S_g(f)(b)$. And $QE(f): QE(G, V)\longrightarrow
QE(G, W)$ is defined by
$$\prod\limits_{g\in G^{tors}_{conj}} \alpha_g\mapsto\prod\limits_{g\in G^{tors}_{conj}} QE_g(f)\circ\alpha_g.$$

It is straightforward to check that all the axioms hold.
\end{proof}

\subsection{Construction of $\eta^{QE}$ and $\mu^{QE}$}\label{Gorthospstructure}
In this section we construct a unit map $\eta^{QE}$ and a
multiplication $\mu^{QE}$ so that we get a commutative
$\mathcal{I}_G-$FSP representing the $QE-$theory in $GwS$.

Let $G$ and $H$ be compact Lie groups, $V$ an orthogonal
$G-$representation and $W$ an orthogonal $H-$representation. We
use $x_g$ to denote the basepoint of $QE_g(G, V)$, which is
defined in Proposition \ref{QEinal}. Let $g\in G^{tors}$. For
each $v\in S^V$, there are $v_1\in S^{V^g}$ and $v_2\in
S^{(V^g)^{\perp}}$ such that $v=v_1\wedge v_2$. Let
$\eta^{QE}_g(G, V): S^V\longrightarrow QE_g(G, V)$ be the map
\begin{equation}\eta^{QE}_g(G,
V)(v):=\begin{cases}(1-\|v_2\|)\eta_g(G, V)(v_1)+ \|v_2\|v_2,
&\text{if $\|v_2\|\leqslant 1$;}\\ x_g, &\text{if
$\|v_2\|\geqslant 1$.}
\end{cases}\label{finaleta}\end{equation} %where $\eta_g(G, V)$ is the unit map defined in Proposition \ref{newF}.
\begin{lemma}The map $\eta^{QE}_g(G, V)$ defined in (\ref{finaleta}) is well-defined, continuous and $C_G(g)-$equivariant.
\label{etaEcontinuous}\end{lemma}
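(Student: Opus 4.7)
The plan is to verify the three assertions by decomposing every $v\in S^V$ via the identification $S^V\cong S^{V^g}\wedge S^{(V^g)^{\perp}}$ (which is legitimate since $C_G(g)$ commutes with $g$, hence preserves the orthogonal splitting $V=V^g\oplus(V^g)^{\perp}$), and chasing cases according to whether $\|v_2\|<1$, $\|v_2\|=1$, or $\|v_2\|>1$, with separate attention to the basepoint $\infty\in S^V$.

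For well-definedness, I will first check that $(1-\|v_2\|)\eta_g(G,V)(v_1)+\|v_2\|v_2$ really represents a point of $QE'_g(G,V)$: the inequality $\|\|v_2\|v_2\|=\|v_2\|^2\leqslant\|v_2\|$ holds on $\|v_2\|\leqslant 1$, and any nonzero $v_2\in (V^g)^{\perp}\subseteq V\subseteq Sym(V)$ lies in $Sym(V)\setminus Sym(V)^g$ (since $(V^g)^{\perp}$ contains no $g$-fixed vector other than $0$), so $v_2\in S(G,V)_g$. Next, consistency of the two branches at $\|v_2\|=1$: the first formula yields the join point $0\cdot\eta_g(G,V)(v_1)+1\cdot v_2$, which is just $v_2$ at the $S(G,V)_g$-end of the join; but this end is entirely contained in the cone $\{t_1c_0+t_2b\}$ (take $t_1=0$, any $a$), and this cone is collapsed to $x_g$ in $QE_g(G,V)$, matching the second branch. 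Finally, at the basepoint of $S^V$ (either $v_1=\infty$ or $v_2=\infty$): in the latter we have $\|v_2\|\geqslant 1$ and the second branch sends it to $x_g$; in the former we have $\eta_g(G,V)(\infty)=c_0$ by the unit axiom, so the first formula produces a point of the cone $\{t_1c_0+t_2b\}$, again collapsing to $x_g$.

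With well-definedness in hand, continuity follows from the gluing lemma applied to the two closed pieces $\{\|v_2\|\leqslant 1\}$ and $\{\|v_2\|\geqslant 1\}$ of $S^V$: each formula is built from continuous operations (norm, the continuous linear map $v\mapsto v_2$, scalar multiplication in $Sym(V)$, the continuous unit $\eta_g(G,V)$, the quotient map defining $QE_g(G,V)$), and the two agree on the overlap $\|v_2\|=1$ by the previous step. Equivariance under $C_G(g)$ is then a formality: $h\in C_G(g)$ acts isometrically on $V$ preserving the decomposition, so $\|hv_2\|=\|v_2\|$ and $h\cdot(v_1\wedge v_2)=(hv_1)\wedge(hv_2)$; the unit $\eta_g(G,V)$ is $C_G(g)$-equivariant by Proposition \ref{newF}(i); and the induced $C_G(g)$-action on the join preserves each end, the subspace $\{\|b\|\leqslant t_2\}$, and the collapsed cone, hence fixes $x_g$.

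The only step requiring genuine care is the compatibility at $\|v_2\|=1$ together with the handling of the basepoint of $S^V$; both rely on the structural fact that the $S(G,V)_g$-end of the join, as well as every point of the form $t_1c_0+t_2b$, is identified with $x_g$ in $QE_g(G,V)$. Once this observation is made explicit, the remainder is a straightforward unpacking of definitions.
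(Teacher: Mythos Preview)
Your proposal is essentially correct and takes a genuinely different route from the paper.

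For well-definedness and equivariance you are actually more thorough than the paper: you check the norm constraint $\|b\|\leqslant t_2$, that a nonzero $v_2\in (V^g)^\perp$ lies in $S(G,V)_g$, and that the two branches agree at $\|v_2\|=1$ and at the basepoint, whereas the paper only records that $v_1=\infty$ or $v_2=\infty$ is sent to $x_g$. Both ingredients are needed, so your account is the more complete one here.

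For continuity the two arguments diverge. The paper proceeds by brute force: for every target point $x\in QE_g(G,V)$ it writes down an explicit neighbourhood basis and computes the preimage in $S^V$, splitting into three cases ($0<t_2<1$ with $a\neq c_0$; $t_2=0$ with $a\neq c_0$; $x=x_g$). Your gluing-lemma argument is much shorter, but as written it has one soft spot. On the piece $\{\|v_2\|\leqslant 1\}$ the join expression $t_1 a + t_2 b$ with $b=v_2$ is not literally a point of $F_g(G,V)\ast S(G,V)_g$ when $v_2=0$, since $0\notin S(G,V)_g$; so ``built from continuous operations'' does not immediately apply. The fix is to route the map through the alternative description in Remark~\ref{ananotherEll}, where $QE_g(G,V)$ is a quotient of a subspace of $F_g(G,V)\times Sym(V)\times I$ and $b=0$ is allowed when $t=0$: then $(v_1,v_2)\mapsto(\eta_g(G,V)(v_1),\,v_2,\,\|v_2\|)$ is manifestly continuous into that subspace and one composes with the quotient. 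It is also cleanest to perform the gluing on the product $S^{V^g}\times S^{(V^g)^{\perp}}$ and then pass to the smash, since $\|v_2\|$ is not single-valued at the basepoint of $S^V$; this simultaneously handles continuity at $\infty$.

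In short: the paper's approach is self-contained but lengthy; yours is more conceptual but needs Remark~\ref{ananotherEll} (or an equivalent observation) to close the gap at $v_2=0$.
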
 The proof of Lemma
\ref{etaEcontinuous} is in Appendix \ref{etaEcontinuousProof}.

\begin{remark}
For any $g\in G^{tors}$, it's straightforward to check the diagram
below commutes.
$$\begin{CD}S^{V^g} @>\eta_g(G, V)>> F_g(G, V) \\
@VVV      @VVV \\
S^V @>\eta^{QE}_g(G, V)>> QE_g(G, V)\end{CD}$$ where both vertical
maps are inclusions. By Lemma \ref{etaEcontinuous}, the map
\begin{equation}\eta^{QE}(G, V): S^V\longrightarrow
\prod\limits_{g\in G^{tors}_{conj}}\mbox{Map}_{C_G(g)}(G, QE_g(G,
V))\mbox{,   } v \mapsto \prod\limits_{g\in
G^{tors}_{conj}}(\alpha\mapsto \eta^{QE}_g(G, V)(\alpha\cdot
v)),\label{GetaQEll}
\end{equation}
is well-defined and continuous. Moreover, $\eta^{QE}:
S\longrightarrow QE$ with $QE(G, V)$ defined in (\ref{EGVnog}) is
well-defined. \end{remark} Next, we construct the multiplication
map $\mu^{QE}$. First we define a map $$\mu^{QE}_{(g, h)}((G,V),
(H, W)): QE_g(G, V)\wedge QE_h(H, W)\longrightarrow QE_{(g,
h)}(G\times H, V\oplus W)$$ by sending a point $[t_1a_1+
t_2b_1]\wedge [u_1a_2 + u_2b_2]$ to
\begin{equation}%\mu^E_{g, h}((G,V), (H, W))([(t_1a_1, t_2b_1)]\wedge [(u_1a_2,u_2b_2)]):=
\begin{cases}[(1-\sqrt{t^2_2+u^2_2})\mu^F_{(g, h)}((G,V)
, (H, W))(a_1\wedge a_2) &\text{if $t^2_2+u^2_2\leq 1$ and
$t_2u_2\neq 0$;}\\ + \sqrt{t^2_2+u^2_2}(b_1+b_2)],
& \\
[(1-t_2)\mu^F_{(g, h)}((G,V), (H, W))(a_1\wedge a_2)+ t_2b_1],
 &\text{if $u_2=0$ and $0<t_2<1$}; \\ [(1-u_2)\mu^F_{(g, h)}((G,V), (H, W))(a_1\wedge a_2) + u_2b_2],
 &\text{if $t_2=0$ and $0<u_2<1$; } \\ [1\mu^F_{(g, h)}((G,V), (H, W))(a_1\wedge a_2) + 0],
 &\text{if $u_2=0$ and $t_2=0$; }\\ x_{g, h},
 &\text{Otherwise.}\end{cases}\label{muEg}
\end{equation} where $\mu^{QE}_{(g, h)}((G,V), (H, W))$ is the one
defined in (\ref{muFKg}) and $x_{g, h}$ is the basepoint of
$QE_{(g, h)}(G\times H, V\oplus W)$.

\begin{lemma}The map $\mu^{QE}_{(g, h)}((G,V), (H, W))$ defined in (\ref{muEg}) is well-defined and
continuous. \label{muEcontinuous}
%$\mu^E_{g, h}$ is a functor from $\mathcal{I}_G\times\mathcal{I}_H$ to $C_{G\times H}(g, h)\mathcal{T}$.
\end{lemma}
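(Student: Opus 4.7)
I would reduce the five-branch formula (\ref{muEg}) to a single uniform expression using the triple presentation of $QE_g(G,V)$ from Remark \ref{ananotherEll}: a point is an equivalence class of triples $(a, b, t)$ with $a \in F_g(G,V)$, $b \in Sym(V)$, $t \in [0,1]$, satisfying $\|b\| \leq t$ and $b \in S(G,V)_g$ whenever $t > 0$, modulo $(a, b, 1) \sim (a', b, 1)$ and $(c_0, b, t) \sim \ast$. Writing $t$ and $u$ for $t_2$ and $u_2$, the four substantive branches of (\ref{muEg}) unify to
\[
\bigl((a_1, b_1, t),\, (a_2, b_2, u)\bigr)\longmapsto \bigl(\mu^F(a_1\wedge a_2),\; b_1+b_2,\; \sqrt{t^2+u^2}\bigr),
\]
under the convention that $b_i$ is taken to be $0$ whenever its parameter vanishes, while the ``otherwise'' clause maps into $x_{g,h}$ on $\{t^2 + u^2 > 1\}$.

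For well-definedness I would first verify admissibility of the image triple. The norm bound $\|b_1 + b_2\|^2 \leq t^2 + u^2$ follows from orthogonality of the bidegree-graded summands of $Sym(V)$ and $Sym(W)$ inside $Sym(V \oplus W)$, combined with $\|b_i\| \leq t, u$; one shows $b_1 + b_2 \notin Sym(V\oplus W)^{(g,h)}$ by a bidegree argument, noting that any positive-degree component of $b_1$ that is not $g$-fixed sits in a bidegree where the $b_2$-contribution vanishes, and symmetrically. Then I would check each quotient relation in $QE_g \wedge QE_h$: the join identifications at $t=0$ or $u=0$ hold because the convention $b_i = 0$ removes dependence on the irrelevant coordinate; the $c_0$-collapse is preserved since Proposition \ref{newF}(i) implies $\mu^F(c_0 \wedge a_2) = c_{0,(g,h)}$, forcing the image to collapse to $x_{g,h}$; and the $t = 1$ or $u = 1$ case falls into the ``otherwise'' branch via $\sqrt{t^2+u^2} \geq 1$.

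For continuity, each individual branch is a composition of continuous operations ($\mu^F$, vector addition in $Sym(V\oplus W)$, and the Euclidean norm) and so is itself continuous; the task reduces to verifying agreement across boundaries. At $t^2 + u^2 = 1$ with $tu > 0$ the Case 1 output has third coordinate $1$ and collapses to $x_{g,h}$ via $(a,b,1) \sim (a',b,1)$ followed by the $c_0$-identification, matching Case 5. As $u \to 0^+$ with $t \in (0,1)$ fixed, the constraint $\|b_2\| \leq u$ forces $b_2 \to 0$, so Case 1 limits to $[(1-t)\mu^F(a_1 \wedge a_2) + tb_1]$, which is exactly Case 2; symmetrically Case 1 agrees with Case 3 across $t = 0$, and at the corner $t = u = 0$ every branch collapses to $\mu^F(a_1 \wedge a_2)$, i.e.\ Case 4. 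At $(t,u) = (1,0)$ or $(0,1)$, Cases 2 and 3 limit to $[0 \cdot \mu^F + 1 \cdot b_i]$, again collapsing to $x_{g,h}$ to meet Case 5. The main technical obstacle I expect is the bookkeeping at $t = 0$, where the triple presentation relaxes the requirement $b \in S(G,V)_g$ and the convention $b_i = 0$ must be shown compatible with the closed-subspace construction of Proposition \ref{QEinal}; once this is granted the continuity and well-definedness checks reduce to the boundary matchings listed above and are routine.
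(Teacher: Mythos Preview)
Your approach is correct and takes a genuinely different route from the paper. The paper's proof is a direct point-set verification: it fixes a point $x$ in the image, constructs an explicit basic open neighborhood of $x$ in $QE_{(g,h)}(G\times H, V\oplus W)$, and checks case by case (five cases, according to whether $x$ is the basepoint, whether the join parameter $s_2$ is zero, and whether the $S$-component $\beta$ lies in $S(G,V)_g$, in $S(H,W)_h$, or has nonzero projection to both) that the preimage is open in $QE_g(G,V)\wedge QE_h(H,W)$. Your strategy instead lifts everything to a single continuous formula on the triple presentation of Remark \ref{ananotherEll} and then verifies it descends through the quotient relations defining $QE_g\wedge QE_h$. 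This is more conceptual: the continuity check upstairs becomes trivial (a composition of $\mu^F$, vector addition in $Sym(V\oplus W)$, and the Euclidean norm), and the work shifts entirely to the quotient-compatibility and boundary-matching verifications you outline, which are straightforward once the unified formula is in hand. The paper's approach trades this conceptual economy for being completely elementary and self-contained; in particular it never needs to invoke that products of quotient maps remain quotient maps in the compactly generated setting, which your descent argument implicitly uses. One small correction: at $t=0$ the constraint $\|b\|\leq t$ in Remark \ref{ananotherEll} \emph{forces} $b=0$ rather than merely permitting it, so this is not a convention but a consequence of the closed-subspace description, and the ``technical obstacle'' you anticipate essentially dissolves.
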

The proof of Lemma \ref{muEcontinuous} is in Appendix
\ref{muEcontinuousProof}.

%\begin{remark} For any $g\in G^{tors}$,  $h\in H^{tors}$,  we have the diagram
% commutes. $$\begin{CD}F_{g}(G, V)\wedge F_h(H, W) @>{\mu^F_{(g, h)}((G, V), (H, W))}>>  F_{(g, h)}(G\times H, V\oplus W)\\
%@VVV   @VVV\\ QE_{g}(G, V)\wedge QE_h(H, W) @>{\mu^{QE}_{(g,
%h)}((G, V), (H, W))}>>  QE_{(g, h)}(G\times H, V\oplus W) \end{CD}$$ where the vertical maps are both inclusion into the end of the join.\end{remark}
The basepoint of $QE(G, V)$ is the product of the basepoint of
each factor $\mbox{Map}_{C_G(g)}(G, QE_g(G, V))$, i.e. the product
of the constant map to the base point of  each $QE_g(G, V)$.

We can define the multiplication $\mu^{QE}((G, V), (H, W)): QE(G,
V)\wedge QE(H, W)\longrightarrow QE(G\times H, V\oplus W)$ by
$$\big(\!\!\prod\limits_{g\in G^{tors}_{conj}}
\!\!\alpha_g\big)\wedge\big(\!\!\prod\limits_{ h\in
H^{tors}_{conj}}\!\!\beta_h\big)\mapsto\prod\limits_{\substack{g\in
G^{tors}_{conj}\\ h\in H^{tors}_{conj}}}\!\!\bigg(\!\!(g',
h')\mapsto \mu^{QE}_{(g, h)}((G, V), (H,
W))\big(\alpha_g(g')\wedge\beta_h(h')\big)\!\!\bigg).
$$
\begin{lemma}
Let $G$, $H$, $K$ be compact Lie groups. Let $V$ be an orthogonal
$G-$representation, $W$ an orthogonal $H-$representation, and $U$
an orthogonal $K-$representation. Let $g\in G^{tors}$, $h\in
H^{tors}$, and $k\in K^{tors}$. Then we have the commutative
diagrams below.
\begin{equation}\begin{CD}
S^V\wedge S^W     @>\eta^{QE}_g(G, V)\wedge \eta^{QE}_h(H, W)>>  QE_g(G, V)\wedge QE_h(H, W)\\
@VV\cong V        @VV{\mu^{QE}_{(g, h)}((G, V), (H, W))}V\\
S^{V\oplus W}     @>\eta^{QE}_{(g, h)}(G\times H, V\oplus W)>>
QE_{(g, h)}(G\times H, V\oplus W)
\end{CD}\end{equation}
\begin{equation}
\begin{CD}
QE_g(G, V)\wedge QE_h(H, W)\wedge QE_k(K, U)     @>\mu^{QE}_g((G, V), (H, W))\wedge Id>>  QE_{(g, h)}(G\times H, V\oplus W)\wedge QE_k(K, U)\\
@VV Id\wedge\mu^{QE}_{(h, k)}(H\times K, W\oplus U)V        @V{\mu^{QE}_{((g, h), k)}((G\times H, V\oplus W), (K, U))}VV\\
QE_g(G, V)\wedge QE_{(h, k)}(H\times K, W\oplus U) @>\mu^{QE}_{(g,
(h, k))}((G, V), (H\times K, W\oplus U))>> QE_{(g, h, k)}(G\times
H\times K, V\oplus W\oplus U)
\end{CD}
\end{equation}
\begin{equation}
\begin{CD}
S^V\wedge QE_{h}(H, W)     @>{\eta^{QE}_g(G, V)\wedge Id}>>  QE_g(G, V)\wedge QE_h(H, W)  @>{\mu^{QE}_{(g, h)}((G,V),  (H, W))}>> QE_{(g, h)}(G\times H, V\oplus W)\\
@VV\tau V     @.   @VV {QE_{(g, h)}(\tau)}V\\
QE_h(H, W)\wedge S^V     @>Id\wedge\eta^{QE}_g(G, V)>>    QE_h(H,
W)\wedge QE_g(G, V) @>{\mu^{QE}_{(h, g)}((H, W), (G, V))}>>
QE_{(h, g)}(H\times G, W\oplus V)
\end{CD}
\end{equation}
Moreover, we have \begin{equation}\mu^{QE}_{(g, h)}((G, V), (H,
W))(x\wedge y)= \mu^{QE}_{(h, g)}((H, W), (G, V))(y\wedge x)
\label{quasicommEg}\end{equation} for any $x\in QE_g(G, V)$ and
$y\in QE_h(H, W)$. \label{Gorthocommdiaglemma}
\end{lemma}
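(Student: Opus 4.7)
The plan is to reduce each of the four diagrams to the corresponding $F_g$-level statement established in Proposition \ref{newF}, combined with careful bookkeeping of the join-coordinate data prescribed by the formulas (\ref{finaleta}) and (\ref{muEg}). Since $\eta^{QE}$ and $\mu^{QE}$ are defined case-by-case according to whether the sphere-perpendicular norms vanish, lie in $(0,1)$, or exceed $1$, the verification is organized as a case analysis in which the generic interior case reduces to the $F$-level assertion via Proposition \ref{newF}, while boundary cases have both routes collapse to the basepoint or reduce to a lower-order generic case.

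For the unit diagram, I would decompose $v \in S^V$ as $v_1 \wedge v_2$ with $v_1 \in S^{V^g}$, $v_2 \in S^{(V^g)^\perp}$, and likewise $w = w_1 \wedge w_2$. In the generic subregion $\|v_2\|^2 + \|w_2\|^2 \leq 1$ with $v_2, w_2 \neq 0$, the top-right composite produces the join point
\[
\bigl(1 - \sqrt{\|v_2\|^2 + \|w_2\|^2}\bigr)\,\mu^F_{(g,h)}\bigl(\eta_g(v_1) \wedge \eta_h(w_1)\bigr) + \sqrt{\|v_2\|^2 + \|w_2\|^2}\,(v_2 + w_2).
\]
The unit identity of Proposition \ref{newF}(i) replaces $\mu^F_{(g,h)}(\eta_g(v_1) \wedge \eta_h(w_1))$ by $\eta_{(g,h)}(v_1 \wedge w_1)$. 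Since the orthogonal complement of $V^g \oplus W^h$ inside $V \oplus W$ is $(V^g)^\perp \oplus (W^h)^\perp$, the vector $v_2 + w_2$ has norm exactly $\sqrt{\|v_2\|^2 + \|w_2\|^2}$, so this matches $\eta^{QE}_{(g,h)}(v \wedge w)$ read off directly from (\ref{finaleta}). The remaining subcases either reduce to this formula with some terms vanishing or send both sides to the basepoint.

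The associativity diagram is handled analogously: in the generic interior subregion both bracketings yield the join point with parameter $\sqrt{t_2^2 + u_2^2 + r_2^2}$, first coordinate equal to the (unique) triple $\mu^F$-product of $a_1, a_2, a_3$ supplied by the associativity half of Proposition \ref{newF}(i), and second coordinate $b_1 + b_2 + b_3$. For centrality of unit, the key observation is that $\eta^{QE}_g(G, V)(v)$ lies in the pure $F_g$-end of the join exactly when $v_2 = 0$, which forces Cases 2--4 of (\ref{muEg}) to apply on both sides and reduces the identity to the centrality square of Proposition \ref{newF}(i) together with functoriality of $QE_{(g,h)}$ under the flip $\tau$. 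Finally, the commutativity relation (\ref{quasicommEg}) follows from part (iii) of Proposition \ref{newF} applied to the $F$-factor, together with the manifest symmetry of $b_1 + b_2 = b_2 + b_1$ and of the parameter $\sqrt{t_2^2 + u_2^2}$ under swapping its arguments.

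The main obstacle is the combinatorial bookkeeping of the degenerate cases in (\ref{muEg}): one must check in each boundary subcase that both routes of every diagram either collapse to the basepoint or reduce to a lower-order generic case, with no inconsistency arising at the transitions between cases. These checks are routine but tedious and are of the same flavor as the continuity verifications for $\eta^{QE}$ and $\mu^{QE}$ deferred to Appendix \ref{tediousproof}; I would present the generic case computation in detail and summarize the boundary cases rather than enumerate all $5 \times 5$ (or, for associativity, $5 \times 5 \times 5$) combinations.
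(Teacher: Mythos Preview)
Your approach is essentially the same as the paper's: both proofs reduce each diagram to the corresponding $F$-level identity from Proposition \ref{newF} and handle the join-coordinate bookkeeping by direct computation in the generic interior case, with the boundary cases collapsing to the basepoint. Your organization of the case analysis and the use of orthogonality to compute $\|v_2 + w_2\|$ match the paper's argument exactly.

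One inaccuracy worth flagging: your description of the centrality-of-unit case is not quite right. You write that $\eta^{QE}_g(G,V)(v)$ lies in the pure $F_g$-end exactly when $v_2 = 0$, and that this ``forces Cases 2--4 of (\ref{muEg}) to apply on both sides.'' But the diagram must be verified for all $v \in S^V$, and when $0 < \|v_2\| < 1$ the point $\eta^{QE}_g(v)$ is a genuine interior join point, so the first case of (\ref{muEg}) applies. The paper handles this directly: in the generic region $\|v_2\|^2 + t_2^2 \leq 1$ both routes yield the join point with parameter $\sqrt{\|v_2\|^2 + t_2^2}$, $F$-coordinate $\mu^F(\eta_g(v_1)\wedge a)$ (respectively $\mu^F(a\wedge\eta_g(v_1))$), and $S$-coordinate $v_2 + b$; the $F$-level centrality from Proposition \ref{newF} then equates the two after applying $QE_{(g,h)}(\tau)$. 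This is exactly parallel to your treatment of unity and associativity, so the fix is immediate---but your stated reduction via ``Cases 2--4'' would only cover the degenerate slice $v_2 = 0$.
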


The proof of Lemma \ref{Gorthocommdiaglemma} is straightforward
and is in Appendix \ref{GorthocommdiaglemmaProof}.

\begin{theorem}
Let $\Delta_G: G\longrightarrow G\times G$ be the diagonal map
$g\mapsto (g, g)$. For $G-$representations $V$ and $W$, let
$(\Delta_G)^*_{V\oplus W}: QE(G\times G, V\oplus W)\longrightarrow
QE(G, V\oplus W)$ denote the restriction map defined by the
formula (\ref{restrictionER}). Then
$QE:\mathcal{I}_G\longrightarrow G\mathcal{T}$ together with the
unit map $\eta^{QE}$ defined in (\ref{GetaQEll}) and  the
multiplication $\Delta_G^*\circ\mu^{QE}((G, -), (G, -))$ gives a
commutative $\mathcal{I}_G-$FSP that weakly represents
$QE^*_G(-)$. \label{GorthoQEll}\end{theorem}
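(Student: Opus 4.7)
The plan is to separate the statement into two parts: (a) that the triple $(QE, \eta^{QE}, \Delta_G^*\circ\mu^{QE}((G,-),(G,-)))$ forms a commutative $\mathcal{I}_G$-FSP, and (b) that this FSP weakly represents $QE^*_G(-)$ in the sense of Definition \ref{gwsdef}. Part (b) is essentially immediate from Proposition \ref{QEinal}: that proposition asserts $QE(G,V)$ represents $QE^V_G(-)$ in $GwT$, i.e.\ $\pi_0(QE(G,V)) = QE^V_G(S^0)$, and this is exactly the representability condition of Definition \ref{gwsdef} when applied across all faithful $G$-representations $V$. So the bulk of the work is part (a).

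For part (a), functoriality of $QE:\mathcal{I}_G\to G\mathcal{T}$ has already been verified in Section \ref{anotherweaksp}. Continuity and well-definedness of the unit $\eta^{QE}(G,V)$ and the multiplication $\mu^{QE}_{(g,h)}((G,V),(H,W))$ are the content of Lemma \ref{etaEcontinuous} and Lemma \ref{muEcontinuous} respectively. The unit, associativity, and centrality-of-unit diagrams are verified factor by factor in Lemma \ref{Gorthocommdiaglemma}; since $QE(G,V) = \prod_{g\in G^{tors}_{conj}}\mbox{Map}_{C_G(g)}(G, QE_g(G,V))$, these componentwise coherences assemble into the required global diagrams. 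To convert the external $G\times G$-equivariant multiplication $\mu^{QE}((G,V),(G,W))$ into an internal $G$-equivariant multiplication, one post-composes with the restriction map $\Delta_G^*$ of formula (\ref{restrictionER}) along the diagonal homomorphism $\Delta_G:G\to G\times G$; functoriality of restriction under group homomorphisms then transports the external coherence diagrams of Lemma \ref{Gorthocommdiaglemma} into the corresponding internal diagrams for $\Delta_G^*\circ\mu^{QE}((G,-),(G,-))$.

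Commutativity of the FSP follows from the relation (\ref{quasicommEg}) in Lemma \ref{Gorthocommdiaglemma}, combined with the fact that $\Delta_G^*$ is invariant under the swap of the two tensor factors (since $\Delta_G$ is equal to its post-composition with the transposition of $G\times G$). The main obstacle I anticipate is a bookkeeping subtlety: one must verify that, after taking products over $G^{tors}_{conj}$ on each side and then applying $\Delta_G^*$, the external multiplication indexed by pairs $(g,h)\in (G\times G)^{tors}_{conj}$ descends compatibly to a multiplication indexed by a single $G^{tors}_{conj}$. This relies on the compatibility of the centralizers $C_{G\times G}(g,h)$ with the diagonal image of $C_G(g)\cap C_G(h)$, and on the fact that the restriction $\Delta_G^*$ identifies the relevant conjugacy-class orbit types correctly. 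Once this indexing compatibility is unwound, the FSP axioms follow on the nose from Lemma \ref{Gorthocommdiaglemma}, and the weak representability from Proposition \ref{QEinal}.
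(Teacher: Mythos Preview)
Your proposal is correct and follows essentially the same approach as the paper: the paper's proof also works factor by factor, expanding both sides of each coherence diagram as products over $G^{tors}_{conj}$ (or $G^{tors}_{conj}\times H^{tors}_{conj}$, etc.) and then invoking the componentwise identities of Lemma \ref{Gorthocommdiaglemma} to conclude. If anything, your write-up is slightly more careful than the paper's in one respect: the paper verifies the \emph{external} unit, associativity, centrality, and commutativity diagrams (for varying groups $G$, $H$, $K$) and leaves implicit the passage to the internal $\mathcal{I}_G$-FSP via $\Delta_G^*$, whereas you explicitly flag that step and the attendant bookkeeping on conjugacy classes.
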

\begin{proof}Let $G$,  $H$,   $K$ be compact Lie groups, $V$ an orthogonal
$G-$representation, $W$ an orthogonal $H-$representation and $U$
an orthogonal $K-$representation.

Let
$$X=\prod\limits_{g\in G^{tors}_{conj}} \alpha_g\in QE(G, V)\mbox{;
      }Y=\prod\limits_{h\in H^{tors}_{conj}} \beta_h\in QE(H, W)
      \mbox{;
      }Z=\prod\limits_{k\in K^{tors}_{conj}} \gamma_k\in QE(K, U).
$$

First we check the diagram of unity commutes. Let $v\in S^V$ and
$w\in S^W$.
%Let $$v=v_{g1}\wedge v_{g2}\mbox{,   with} v_{g1}\in S^{V^g}\mbox{, and }v_{g2}\in S^{(V^g)^{\perp}},$$
%$$w=w_{h1}\wedge w_{h2}\mbox{, with}   w_{h1}\in S^{W^h}\mbox{, and }w_{h2}\in S^{(W^h)^{\perp}},$$
$\mu^{QE}((G, V), (H, W))\circ (\eta^{QE}(G, V)\wedge \eta^{QE}(H,
W))(v\wedge w)$ is %the basepoint if $\|v_2\|^2+\|w_2\|^2\geqslant 1$. If $\|v_2\|^2+\|w_2\|^2 \leqslant 1$, it is
\begin{equation}\prod\limits_{g\in G^{tors}_{conj}, h\in H^{tors}_{conj}}\!\!\bigg(\!\! (g',
h')\mapsto \mu^{QE}_{(g, h)}((G, V), (H, W))\circ
(\eta^{QE}_{g}(G, V)\wedge \eta_{h}^{QE}(H, W))(g'\cdot v\wedge
h'\cdot w)\!\!\bigg)
.\label{TGorthocommdiaglemunity1}\end{equation} $\eta^{QE}(G\times
H, V\oplus W)(v\wedge w)= \prod\limits_{\substack{g\in
G^{tors}_{conj} \\ h\in H^{tors}_{conj}}}\bigg( (g', h')\mapsto
\eta^{QE}_{(g, h)}(G\times H, V\oplus W)(g'\cdot v\wedge h'\cdot
w)\bigg) ,$  is equal to  (\ref{TGorthocommdiaglemunity1}) by
Lemma \ref{Gorthocommdiaglemma}.

Next we check the diagram of associativity commutes.

$\mu^{QE}((G\times H, V\oplus W), (K, U))\circ (\mu^{QE}((G, V),
(H, W))\wedge Id)(X\wedge Y\wedge Z)$ is
\begin{align*}&\prod\limits_{g\in G^{tors}_{conj} h\in H^{tors}_{conj}, k\in G^{tors}_{conj}}\bigg((g', h', k')\mapsto\\ &\mu^{QE}_{((g, h), k)}((G\times H, V\oplus W), (K, U))\circ
(\mu^{QE}_{(g, h)}((G, V), (H, W))\wedge Id)(\alpha_g(g')\wedge
\beta_h(h')\wedge \gamma_k(k'))\bigg)\end{align*} And
$\mu^{QE}((G, V), (H\times K, W\oplus
U))\circ(Id\wedge\mu^{QE}(H\times K, W\oplus U))(X\wedge Y\wedge
Z)$ is \begin{align*}&\prod\limits_{g\in G^{tors}_{conj} h\in
H^{tors}_{conj}, k\in G^{tors}_{conj}}\bigg((g', h', k')\mapsto\\
&\mu^{QE}_{(g, (h, k))}((G, V), (H\times K, W\oplus
U))\circ(Id\wedge\mu^{QE}_{(h, k)}(H\times K, W\oplus
U))(\alpha_g(g')\wedge \beta_h(h')\wedge
\gamma_k(k'))\bigg)\end{align*}

By Lemma \ref{Gorthocommdiaglemma}, the two terms are equal.

In addition, $QE(\tau)\circ \mu^{QE}((G,V), (H, W))\circ
(\eta^{QE}(G, V)\wedge Id)(v\wedge X)$ is
$$\prod\limits_{g\in G^{tors}_{conj}, h\in H^{tors}_{conj}}\bigg(
(h', g')\mapsto QE_{(g, h)}(\tau)\circ \mu^{QE}_{(g, h)}((G,V),
(H, W))\circ (\eta^{QE}_g(G, V)\wedge Id)((g'\cdot v)\wedge
\beta_h(h'))\bigg)$$ And $\mu^{QE}((H, W), (G,
V))\circ(Id\wedge\eta^{QE}(H, W)) \circ \tau (v\wedge X)$ is
$$\prod\limits_{g\in G^{tors}_{conj}, h\in H^{tors}_{conj}}\bigg(
(h', g')\mapsto \mu^{QE}_{(h, g)}((H, W), (G,
V))\circ(Id\wedge\eta^{QE}_h(H, W)) \circ \tau ((g'\cdot v)\wedge
\beta_h(h'))\bigg)$$

The two terms are equal.  So the centrality of unit diagram
commutes.

Moreover, by Lemma \ref{Gorthocommdiaglemma}, $\mu^{QE}((G, V),
(H, W))(X\wedge Y)= $
\begin{align*}&\prod\limits_{g\in
G^{tors}_{conj}, h\in H^{tors}_{conj}}\bigg( (g',
h')\mapsto\mu^{QE}_{(g, h)}((G, V), (H, W))(\alpha_g(g')\wedge
\beta_h(h'))\bigg)\\ = &\prod\limits_{g\in G^{tors}_{conj}, h\in
H^{tors}_{conj}}\bigg( (h',g')\mapsto\mu^{QE}_{(h, g)}((H, W), (G,
V))(\beta_h(h')\wedge\alpha_g(g'))\bigg)\end{align*} is
$\mu^{QE}((H, W), (G, V))(Y\wedge X)$.  Therefore we have the
commutativity of $QE$.  \end{proof}

\begin{proposition}
Let $G$ be any compact Lie group. Let $V$ be an ample orthogonal
$G-$representation and $W$ an orthogonal $G-$representation. Let
$\sigma^{QE}_{G, V, W}: S^W\wedge QE(G, V)\longrightarrow QE(G,
V\oplus W)$ denote the structure map of $QE$ defined by the unit
map  $\eta^{QE}(G, V)$. Let $\widetilde{\sigma}^{QE}_{G, V, W}$
denote the right adjoint of $\sigma^{QE}_{G, V, W}$. Then
$\widetilde{\sigma}^{QE}_{G, V, W}: QE(G, V)\longrightarrow
\mbox{Map}(S^W, QE(G, V\oplus W))$ is a $G-$weak equivalence.
\label{fibrantweakequiv}\end{proposition}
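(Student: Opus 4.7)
The plan is to exploit the product decomposition $QE(G, V) = \prod_{g \in G^{tors}_{conj}} \mbox{Map}_{C_G(g)}(G, QE_g(G, V))$ to reduce the claim factor-wise, and then verify the factor-wise statement via fixed-point analysis using the join description of $QE_g$ from Proposition \ref{QEinal} together with Proposition \ref{newF}(ii).

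First I would observe that, after applying the natural isomorphism $\mbox{Map}(S^W, \mbox{Map}_{C_G(g)}(G, -)) \cong \mbox{Map}_{C_G(g)}(G, \mbox{Map}(S^W, -))$, the $g$-component of $\widetilde{\sigma}^{QE}_{G, V, W}$ is obtained by applying $\mbox{Map}_{C_G(g)}(G, -)$ to a $C_G(g)$-map $\widetilde{\tau}_g: QE_g(G, V) \to \mbox{Map}(S^W, QE_g(G, V \oplus W))$. The decomposition $\mbox{Map}_{C_G(g)}(G, Y)^H = \prod_{[x] \in H \backslash G / C_G(g)} Y^{x^{-1} H x \cap C_G(g)}$ over $H$-$C_G(g)$-double cosets in $G$ shows that the right-induction functor $\mbox{Map}_{C_G(g)}(G, -)$ sends $C_G(g)$-weak equivalences to $G$-weak equivalences, so it suffices to show that each $\widetilde{\tau}_g$ is a $C_G(g)$-weak equivalence.

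I would then verify $K$-weak equivalence for each closed $K \leq C_G(g)$ by case analysis. If $\langle g \rangle \not\leq K$, then by Proposition \ref{sfnew} $S(G, V)_g^K$ is contractible, and as in the proof of Proposition \ref{QEinal} both $QE_g(G, V)^K$ and $QE_g(G, V \oplus W)^K$ are contractible; hence $\mbox{Map}_K(S^W, QE_g(G, V \oplus W))$ is contractible and $\widetilde{\tau}_g^K$ is a weak equivalence between contractible spaces. If instead $\langle g \rangle \leq K$, then $S(G, V)_g^K = \emptyset$, forcing $QE_g(G, V)^K = F_g(G, V)^K$. Inspecting formula (\ref{finaleta}) shows that for $w = w_1 \wedge w_2 \in S^{W^g} \wedge S^{(W^g)^\perp} = S^W$ with $w_2 = 0$ one has $\eta^{QE}_g(G, W)(w) = \eta_g(G, W)(w_1)$, so the $u_2 = 0 = t_2$ case of (\ref{muEg}) yields that the restriction-to-$S^{W^g}$ of $\widetilde{\tau}_g(\alpha)$ equals the $F_g$-level adjoint structure map $\widetilde{\sigma}_g(G, V, W)(\alpha)$. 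This produces a commutative square
\begin{equation*}
\begin{CD}
F_g(G, V)^K @>{\widetilde{\tau}_g^K}>> \mbox{Map}_K(S^W, QE_g(G, V \oplus W)) \\
@| @VV{\rho}V \\
F_g(G, V)^K @>{\widetilde{\sigma}_g^K}>> \mbox{Map}_K(S^{W^g}, F_g(G, V \oplus W))
\end{CD}
\end{equation*}
in which $\rho$ is restriction along $S^{W^g} \hookrightarrow S^W$ (using that any $K$-equivariant map from $S^{W^g}$ lands in $QE_g(G, V \oplus W)^g = F_g(G, V \oplus W)$). The bottom arrow is a weak equivalence by Proposition \ref{newF}(ii) applied to $K \leq C_G(g) \hookrightarrow \Lambda_G(g)$.

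The main obstacle is proving that $\rho$ is a weak equivalence, which I would handle by equivariant obstruction theory. The inclusion $S^{W^g} \hookrightarrow S^W$ is a $K$-cofibration, and every cell in the complement $S^W \setminus S^{W^g}$ has isotropy $K' \leq K$ with $\langle g \rangle \not\leq K'$ (its points are not $g$-fixed). For such $K'$, the first case gives that $QE_g(G, V \oplus W)^{K'}$ is contractible, so all obstructions to extension and to uniqueness of extension vanish. The fibers of $\rho$ over each $K$-equivariant map on $S^{W^g}$ are therefore contractible, making $\rho$ a weak equivalence and completing the proof.
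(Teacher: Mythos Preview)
Your proposal is correct and follows essentially the same route as the paper: reduce to the $g$-component map $QE_g(G,V)\to\mbox{Map}(S^W,QE_g(G,V\oplus W))$, then split into the cases $\langle g\rangle\leq K$ (where one compares with $\widetilde{\sigma}_g$ via restriction to $S^{W^g}$ and invokes Proposition~\ref{newF}) and $\langle g\rangle\not\leq K$ (where both sides are contractible). The only presentational differences are that you justify the reduction step explicitly via the double-coset decomposition of $\mbox{Map}_{C_G(g)}(G,-)^H$, whereas the paper simply asserts ``it suffices,'' and you argue the restriction map $\rho$ is a weak equivalence by direct equivariant obstruction theory, whereas the paper appeals to Theorem~\ref{KUM} for the same conclusion.
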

\begin{proof}From the formula  of $\eta^{QE}(G, V)$, we can get an explicit
formula for $$\widetilde{\sigma}^{QE}_{G, V, W}:  QE(G,
V)\longrightarrow \mbox{Map}(S^W, QE(G, V\oplus W)).$$ Let
$\alpha:= \prod\limits_{g\in G^{tors}_{conj}} \alpha_g$ be any
element in $QE(G, V)=\prod\limits_{g\in G^{tors}_{conj}}
\mbox{Map}_{C_G(g)}(G, QE_g(G, V)),$  and $w$ an element in $S^W$.
For each $g\in G^{tors}_{conj}$, $w$ has a unique decomposition
$w=w_g^1\wedge w_g^2$ with $w_g^1\in S^{W^g}$ and $w_g^2\in
S^{(W^g)^{\perp}}$. $\widetilde{\sigma}^{QE}_{G, V, W}$ sends
$\alpha$ to
$$w\mapsto \bigg(\prod\limits_{g\in G^{tors}_{conj}}g'\mapsto \Delta_G^*\circ\mu^{QE}_{(g, g)}((G, V), (G, W))(\alpha_g(g'), \eta^{QE}_{g}(G, W) (g'\cdot w))\bigg).$$

It suffices to show that for each $g\in G^{tors}_{conj}$, the map
\begin{align*}\widetilde{\sigma}^{QE}_{G, g, V, W}: QE_g(G, V)&\longrightarrow \mbox{Map}_{C_G(g)}(S^W, QE_g(G,
V\oplus W))\\ x &\mapsto \bigg(w \mapsto
\Delta_G^*\circ\mu^{QE}_{(g, g)}((G, V), (G, W))(x, \eta^{QE}_g(G,
W)(w))\bigg)\end{align*} is a $C_G(g)-$weak equivalence. We check
for each closed subgroup $H$ of $C_G(g)$, the map
$(\widetilde{\sigma}^{QE}_{G, g, V, W})^H$ on the fixed point
space is a homotopy equivalence.

\textbf{Case I:}  $g\in H$.

$QE_g(G, V)^H$ is the space $F_g(G, V)^H$. By Proposition
\ref{newF}, $$\widetilde{\sigma}_g(G, V, W)^H: F_g(G,
V)^H\longrightarrow \mbox{Map}_H(S^{W^g}, F_g(G, V\oplus W))$$ is
a weak equivalence.

%We can define  $H: [0, 1]\times \mbox{Map}_H(S^{W^g}, F_g(G, V\oplus W))\longrightarrow \mbox{Map}_H(S^{W}, F_g(G, V\oplus W))$
%by sending an $H-$equivariant map  $$f: S^{W^g}\longrightarrow F_g(G, V\oplus W)$$ to \begin{align*}S^{W}&\longrightarrow F_g(G,
%V\oplus W)\\   (w_g^1\wedge w_g^2)&\mapsto \begin{cases}[(1-t\|w_g^2\|)f(w_g^1)+t\|w_g^2\|(tw_g^2)],\mbox{    } \text{if $t\|w_g^2\|\leqslant 1$;}\\ c_0, \mbox{    }\text{if $t\|w_g^2\|\geqslant 1$}\end{cases}\end{align*}

%Then $H_0\circ \widetilde{\sigma}_g(G, V, W)^H$ is the map $(\widetilde{\sigma}^E_{G, g, V, W})^H$. And $H_1$ is the inclusion.

By Theorem \ref{KUM}, $$\mbox{Map}_H(S^{W}, QE_g(G, V\oplus
W))\longrightarrow \mbox{Map}_H(S^{W^g}, F_g(G, V\oplus W)),
\mbox{   }  f\mapsto f|_{S^{W^g}}$$ is a homotopy equivalence. And
we have the diagram below commutes.
\begin{equation}\xymatrix{F_g(G,
V)^H\ar[r]^>>>>>{\simeq}\ar[rd] &\mbox{Map}_H(S^{W^g}, F_g(G, V\oplus W))\\
&\mbox{Map}_H(S^{W}, QE_g(G, V\oplus
W))\ar[u]_{\simeq}}\end{equation} So $\widetilde{\sigma}^{QE}_{G,
g, V, W}\!\!:\!\! F_g(G, V)^H\!\!\longrightarrow \!\!
\mbox{Map}_H(S^{W^g}, F_g(G, V\oplus W))$ is a homotopy
equivalence. %on the fixed point space $E_g(G, V)^H$.

\textbf{Case II:   }  $g$ is not in $H$.
In this case, $QE_g(G, V)^H$ is contractible. It suffices to show
 $\mbox{Map}_H(S^{W}, QE_g(G, V\oplus W)$ is also
contractible. Note that for any closed subgroup $H'$ of $H$,
$QE_g(G, V\oplus W)^{H'}$ is contractible. So for each $n-$cell
$H/H'\times D^n$ of $S^W$, it's mapped to $QE_g(G, V\oplus
W)^{H'}$ unique up to homotopy. So $\mbox{Map}_H(S^{W}, QE_g(G,
V\oplus W)$ is contractible.

Therefore $\widetilde{\sigma}^{QE}_{G, g, V, W}$ is a
$C_G(g)-$weak equivalence. So $\widetilde{\sigma}^{QE}_{G, V, W}$
is a $G-$weak equivalence.
\end{proof}

By Proposition \ref{QEinal} and Proposition
\ref{fibrantweakequiv} we can get the conclusion below.
\begin{corollary}
For any compact Lie group $G$,  $(QE(G, -), \eta^{QE}, \mu^{QE})$  represents
$QE^{(-)}_G(-)$ in $GwS$. \label{uptokweak}
\end{corollary}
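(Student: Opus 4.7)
The plan is to simply verify the two conditions in Definition \ref{gwsdef} by assembling the pieces that have already been put in place. First, I need to confirm that $(QE(G,-),\eta^{QE},\mu^{QE})$ is an orthogonal $G$-spectrum; this is immediate from Theorem \ref{GorthoQEll}, which shows it is a commutative $\mathcal{I}_G$-FSP. The underlying spectrum has structure maps
\[
\sigma^{QE}_{G,V,W}: S^W\wedge QE(G,V)\longrightarrow QE(G,V\oplus W)
\]
built from $\eta^{QE}(G,W)$ and $\Delta_G^*\circ\mu^{QE}((G,V),(G,W))$.

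Second, for every faithful $G$-representation $V$, I must exhibit a natural identification $\pi_0(QE(G,V))=QE^V_G(S^0)$. Since the construction of $S(G,V)_g$ used faithfulness of $V$ to ensure that $Sym(V)$ contains every irreducible $G$-representation (so Proposition \ref{sfnew} gives the correct fixed-point pattern), Proposition \ref{QEinal} applies and tells us that $QE(G,V)$ represents $QE^V_G(-)$ in $GwT$. Unwinding Definition \ref{gwtdef}, this is precisely the required statement $\pi_0(QE(G,V))=QE^V_G(S^0)$ for each faithful $V$.

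The one remaining subtlety is naturality in $V$: the identifications must be compatible as $V$ ranges over faithful representations and morphisms in $\mathcal{I}_G$. Naturality with respect to linear isometric isomorphisms follows from the functoriality of $F_g$, $S(G,-)_g$, the join construction, and $\mathcal{R}_g$, which were all checked in Sections \ref{basicweakconstr} and \ref{anotherweaksp}. Naturality with respect to the structure maps of the spectrum is ensured by Proposition \ref{fibrantweakequiv}: the adjoint structure maps $\widetilde{\sigma}^{QE}_{G,V,W}$ are $G$-weak equivalences whenever $V$ is ample, so on $\pi_0$ they implement the suspension isomorphism of $QE^*_G(-)$, which matches the suspension in the equivariant cohomology theory $QE^*_G$.

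The main obstacle, to the extent there is one, is bookkeeping this last coherence: verifying that the identification from Proposition \ref{QEinal} commutes on $\pi_0$ with the suspension isomorphism coming from $\widetilde{\sigma}^{QE}_{G,V,W}$. This reduces, one factor at a time, to the analogous fact for the representing spectrum $E$ together with the weak equivalence $QE_g(G,V)\simeq F_g(G,V)\ast S(G,V)_g$ established in Proposition \ref{QEinal}, so no genuinely new input is required beyond what has already been proved.
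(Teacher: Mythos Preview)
Your proposal is correct and follows essentially the same approach as the paper: the paper's proof is a one-line citation of Proposition \ref{QEinal} and Proposition \ref{fibrantweakequiv}, and your plan is simply a more detailed unpacking of how those two results combine (with Theorem \ref{GorthoQEll} supplying the $\mathcal{I}_G$-FSP structure). The extra coherence bookkeeping you flag is not addressed explicitly in the paper either, so you have not omitted anything the paper provides.
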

Especially we have the conclusion for quasi-elliptic cohomology.
\begin{corollary}
For any compact Lie group $G$,  $(QKU(G, -), \eta^{QKU}, \mu^{QKU})$  represents
$QEll^{(-)}_G(-)$ in $GwS$.
\end{corollary}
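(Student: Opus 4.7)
The plan is to deduce this corollary as an immediate specialization of Corollary \ref{uptokweak} by taking $E$ to be equivariant complex K-theory $KU$. All I have to do is verify the three bulleted hypotheses listed at the beginning of Section \ref{GorthospectraQEll}, so that the machinery developed in Section \ref{anotherweaksp} and Section \ref{Gorthospstructure} applies verbatim to produce the desired $\mathcal{I}_G$-FSP $(QKU(G,-), \eta^{QKU}, \mu^{QKU})$. Once this is done, the conclusion $\pi_0(QKU(G,V)) = QEll^V_G(S^0)$ will come out by tracing through Definitions \ref{qelldef} and \ref{gwsdef}.

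First I would confirm the change-of-group isomorphism $\rho^G_H \colon K^*_G(G\times_H X)\xrightarrow{\cong} K^*_H(X)$ for equivariant K-theory; this is classical, and is in fact the model on which the analogous statement for $QEll$ (Proposition \ref{restrictionq} and its successor) is patterned. Second, I would recall from Appendix \ref{recalortho} that global $K$-theory $\mathbf{KU}$ is a commutative orthogonal ring spectrum whose underlying orthogonal $G$-spectrum is a commutative $\mathcal{I}_G$-FSP representing equivariant $K$-theory on compact $G$-spaces. Third, I would cite Theorem \ref{KUGweak}, which says exactly that the adjoint structure map $\widetilde{\sigma}^{KU}_{V,W}\colon KU(V) \longrightarrow \mathrm{Map}(S^W, KU(V\oplus W))$ is a $G$-weak equivalence whenever $W$ is an ample $G$-representation. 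This is the only hypothesis whose verification has genuine mathematical content, and the paper has wisely quarantined it as an appendix-level input.

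With these three inputs in hand, Corollary \ref{uptokweak} applies with $E = KU$, producing the commutative $\mathcal{I}_G$-FSP $(QKU(G,-), \eta^{QKU}, \mu^{QKU})$. For any faithful orthogonal $G$-representation $V$, unwinding the definition of $QE$ (see equation (\ref{defintroE2})) gives
$$\pi_0\bigl(QKU(G,V)\bigr) \;=\; \prod_{g\in G^{tors}_{conj}} K^{V^g}_{\Lambda_G(g)}(S^0) \;=\; QEll^V_G(S^0),$$
where the final equality is exactly Definition \ref{qelldef}. This is the condition (\ref{weaksenspV}) in Definition \ref{gwsdef}, so $(QKU(G,-), \eta^{QKU}, \mu^{QKU})$ represents $QEll^{(-)}_G(-)$ in $GwS$. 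The main (and essentially only) obstacle would have been the $G$-weak equivalence of structure maps for $KU$, but this has already been addressed by Theorem \ref{KUGweak}; everything else is bookkeeping and the invocation of standard properties of equivariant K-theory.
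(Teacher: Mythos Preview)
Your proposal is correct and matches the paper's approach exactly: the paper states this corollary immediately after Corollary \ref{uptokweak} with the single sentence ``Especially we have the conclusion for quasi-elliptic cohomology,'' treating it as the specialization $E = KU$ without further argument. You have simply made explicit the verification that $KU$ satisfies the three bulleted hypotheses of Section \ref{GorthospectraQEll}, citing precisely the results (the classical change-of-group isomorphism, Theorem \ref{globalKtheorem}, and Theorem \ref{KUGweak}) that the paper has set up for this purpose.
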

At last, we get the main conclusion of Section
\ref{GorthospectraQEll}.
\begin{theorem}
There is a well-defined functor $\mathcal{Q}$ from the full subcategory consisting of
$\mathcal{I}_G-$FSP in $GwS$ to the same category sending $(E_G, \eta^E, \mu^E)$  to
$(QE(G, -), \eta^{QE}, \mu^{QE})$ that represents the
cohomology theory $QE$ in $GwS$.

The restriction of $\mathcal{Q}$ to the full subcategory consisting of
commutative $\mathcal{I}_G-$FSP  is a functor from that category  to itself.
 \end{theorem}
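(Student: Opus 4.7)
The plan is to check that the assignment $(E_G, \eta^E, \mu^E)\mapsto (QE(G,-), \eta^{QE}, \mu^{QE})$ extends to morphisms in a way compatible with composition, identities, and the commutativity condition. First I would define $\mathcal{Q}$ on morphisms. A morphism of $\mathcal{I}_G$-FSPs $\phi\colon (E_G,\eta^E,\mu^E)\longrightarrow (E'_G,\eta^{E'},\mu^{E'})$ consists of equivariant maps $\phi(V)\colon E(V)\to E'(V)$ commuting with structure maps. Post-composition with $\phi((V)^{\mathbb{R}}_g\oplus V^g)$ gives a $C_G(g)$-equivariant map
\[
F_g(\phi)(G,V)\colon F_g(G,V)\longrightarrow F'_g(G,V),\quad \alpha\mapsto \phi((V)^{\mathbb{R}}_g\oplus V^g)\circ\alpha.
\]
Since $F_g(\phi)$ sends the canonical basepoint to the canonical basepoint and is identity on the $S(G,V)_g$ side, it induces a $C_G(g)$-map $QE_g(\phi)(G,V)\colon QE_g(G,V)\to QE'_g(G,V)$ on the quotient of the subspace of the join described in Proposition~\ref{QEinal}, acting as $t_1a+t_2b\mapsto t_1F_g(\phi)(a)+t_2b$. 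Applying $\mbox{Map}_{C_G(g)}(G,-)$ and taking the product over $G^{tors}_{conj}$ yields $\mathcal{Q}(\phi)(G,V)\colon QE(G,V)\to QE'(G,V)$.

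Next I would verify that $\mathcal{Q}(\phi)$ is a morphism of $\mathcal{I}_G$-FSPs, i.e.\ it intertwines $\eta^{QE}$ and $\mu^{QE}$. Compatibility with $\eta^{QE}$ reduces, by the piecewise formula (\ref{finaleta}), to compatibility of $F_g(\phi)$ with $\eta_g(G,V)$, which in turn follows from the identity $\phi\circ \eta^E = \eta^{E'}$. Similarly, compatibility with the multiplication $\mu^{QE}$ reduces, via (\ref{muEg}), to compatibility of $F_g(\phi)$ with the multiplications $\mu^F$ of Proposition~\ref{newF}, which holds because $\phi$ intertwines $\mu^E$ with $\mu^{E'}$. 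The piecewise formulas in (\ref{finaleta}) and (\ref{muEg}) only use $\phi$ in the $F_g$-coordinate and leave the $S(G,V)_g$-coordinate and the scalar parameters untouched, so the commutative squares hold on the nose.

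Functoriality of $\mathcal{Q}$ itself is then routine. The identity morphism on $E$ induces the identity on $F_g(G,V)$ by post-composition, hence the identity on $QE_g(G,V)$ and on $QE(G,V)$. Given $\phi\colon E\to E'$ and $\psi\colon E'\to E''$, the equality $(\psi\phi)(V)\circ\alpha = \psi(V)\circ(\phi(V)\circ\alpha)$ shows $F_g(\psi\phi)=F_g(\psi)\circ F_g(\phi)$, and this passes through the quotient and the product-over-$G^{tors}_{conj}$ construction without issue. For the commutative case, observe that in the construction of $\mu^{QE}$ the commutativity of $E$ is used only through property (iii) of Proposition~\ref{newF}, and Theorem~\ref{GorthoQEll} shows directly that $(QE(G,-),\eta^{QE},\mu^{QE})$ is commutative whenever $(E_G,\eta^E,\mu^E)$ is, so $\mathcal{Q}$ restricts to a functor on the full subcategory of commutative $\mathcal{I}_G$-FSPs.

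The main obstacle is bookkeeping rather than conceptual: one must carefully verify that the piecewise prescriptions (\ref{finaleta}) and (\ref{muEg}) are compatible with the induced map $F_g(\phi)$ on each piece (including the collapsed basepoint region $\|v_2\|\geqslant 1$ and the non-generic cases $t_2u_2=0$), and that the resulting square of maps of $C_G(g)$-spaces descends to the quotient defining $QE_g(G,V)$. Once this piecewise verification is in hand, everything else follows formally, and the representability of $QE$ in $GwS$ was already established in Corollary~\ref{uptokweak}.
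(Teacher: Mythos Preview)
Your proposal is correct and in fact supplies more detail than the paper does. In the paper this theorem is stated as the ``main conclusion'' of the section and is left without its own proof: it is simply the packaging of Theorem~\ref{GorthoQEll} (which constructs the $\mathcal{I}_G$-FSP structure and its commutativity) together with Corollary~\ref{uptokweak} (which gives representability in $GwS$), and the functoriality on morphisms is treated as evident from the explicit formulas. Your explicit definition of $\mathcal{Q}(\phi)$ via post-composition on the $F_g$-coordinate, and your pointwise check that this respects the piecewise formulas (\ref{finaleta}) and (\ref{muEg}), is exactly the routine verification the paper is tacitly invoking; nothing further is needed.
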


\section{The Restriction map}\label{gaction} In this section we
construct the restriction maps $QE(G, V)\longrightarrow QE(H, V)$
for group homomorphisms $H\longrightarrow G$. The restriction maps
for quasi-elliptic cohomology can be constructed in the same way.

% should I EMPHASIZE here that g goes over all G^{tors}...for E(G, V) and QEll^V_G
%\subsubsection{Restriction map on the fixed point space}\label{ga1}\label{rest}

Let $\phi: H\longrightarrow G$ be a group homomorphism and $V$ a
$G-$representation. For any homomorphism of compact Lie groups
$\phi: H\longrightarrow G$ and $H-$space $X$, we have the
change-of-group isomorphism $QE^*_G(G\times_H X)\cong QE^*_H(X).$
Thus, for any subgroup $K$ of $H$, we have the isomorphism
$QE^n_G(G/K)=QE^n_G(G\times_H H/K)\cong QE^n_H(H/K).$ So by
Proposition \ref{QEinal} the space $QE(G, V)^K$ is homotopy
equivalent to $QE(H, V)^K$ when $V$ is a faithful
$G-$representation. It implies when we consider $QE(G, V)$ as an
$H-$space, it is $H-$weak equivalent to $QE(H, V)$.

As indicated in Remark \ref{Enotorthosp}, the orthogonal
$G-$spectrum $QE(G, -)$  cannot arise from an orthogonal spectrum.
As a result, the restriction map $QE(G, V)\longrightarrow QE(H, V)$ %introduce enriched categories and Anna Marie's definition of global spectra
cannot be a homeomorphism. We construct in this section a
restriction map $\phi^*_V$ that is $H-$weak equivalence such that
the diagram below commutes.\begin{equation}\begin{CD}\pi_k(QE(G, V)) @>{\cong}>> QE^V_G(S^k)\\
@VV{\pi^k(\phi^*_V)}V     @VV{\phi^*}V \\
\pi_k(QE(H, V)) @>{\cong}>>
QE^V_H(S^k)\end{CD}\label{restrictionupto}
\end{equation}%better  interpretation??
where $\phi^*$ is the restriction map of quasi-elliptic
cohomology.

Let $X$ be a $G-$space. Let $g\in G^{tors}$ and $h\in H^{tors}$.
The group homomorphism $\phi: H\longrightarrow G$ sends $C_H(h)$
to $C_{G}(g)$ and also gives
$$\phi_*: \Lambda_H(h)\longrightarrow \Lambda_{G}(\phi(h)), \mbox{    } [h', t]\mapsto [\phi(h'), t].$$
$\phi$ induces an $H-$action on $X$. Especially, $X^g=X^h$ and
$\phi_*$ induces a $\Lambda_H(h)-$action on it for each $h\in
H^{tors}$. We consider the equivalent definition of the
$QE-$theory
$$QE^*_G(X)=\prod_{g\in G^{tors}} E^*_{\Lambda_G(g)}(X^g).$$ With this
definition, the restriction map can have a relatively simple form.

For each $g\in G^{tors}$, we first define a map
$$Res_{\phi, g}: \mbox{Map}_{C_{G}(g)}(G, QE_g(G, V))\longrightarrow
\prod\limits_{\tau}\mbox{Map}_{C_{H}(\tau)}(H, QE_{\tau}(H, V))$$
in the form $\prod\limits_{\tau}\bigg( R_{\phi, \tau}:
\mbox{Map}_{C_{G}(g)}(G, QE_g(G, V))\longrightarrow
\mbox{Map}_{C_{H}(\tau)}(H, QE_{\tau}(H, V))\bigg)$ where $\tau$
goes over all the elements $\tau$ in $H^{tors}$
such that $\phi(\tau)=g$. % conjugate to g??
%For each $\tau$,  $R_{\tau}$ is a map from $\mbox{Map}_{C_{G}(g)}(G, E_g(G, V))$ to $\mbox{Map}_{C_{H}(\tau)}(H, E_{\tau}(H, V))$.
Then we will combine all the $Res_{\phi, g}$s to define the
restriction map $\phi^*_V$.

The restriction map
$$\phi^*_V: QE(G, V)\longrightarrow QE(H, V)$$ to be defined should make the diagram
(\ref{resmoral}) commute, which implies that (\ref{restrictionupto}) commutes. %true? rethink.
\begin{equation}\xymatrix{X^g\ar[d]_{=} \ar[r]&X\ar[d]_{=}\ar[r]^>>>>{\widetilde{f}}
&\mbox{Map}_{C_G(g)}(G, QE_g(G, V))\ar[d]^{R_{\phi,
\tau}}\ar[r]^>>>>>{\alpha\mapsto
\alpha(e)}&F_g(G, V)\ar[d]^{res|^{\Lambda_G(g)}_{\Lambda_H(\tau)}}\\
X^{\tau}\ar[r]&X\ar[r]^>>>>{R_{\phi, \tau}\circ\widetilde{f}}
&\mbox{Map}_{C_H(\tau)}(H, QE_\tau(H, V))\ar[r]^>>>>>{\beta\mapsto
\beta(e)}&F_\tau(H, V)}\label{resmoral}\end{equation} where
$res|^{\Lambda_G(g)}_{\Lambda_H(\tau)}$ is the restriction map
defined in (\ref{EGres}).

%I construct below the desired restriction map.  Let $V$ be a
%$G-$representation. $\phi$ induces an $H-$action on it. %$\phi^*: R_g(KU_{\Lambda_G(g)})(V)\longrigntarrow \prod_{\tau}R_{\tau}(KU_{\Lambda_H(\tau)})(V)$

% In the rest of this section, I use the symbol $KR_g(V)$ to denote $KR(V_g)$.

Let $\tau\in H^{tors}$ and $g=\phi(h)$. Then we have the
isomorphism $$a_{\tau}: (V)^{\mathbb{R}}_g\oplus
V^g\longrightarrow (V)^{\mathbb{R}}_{\tau}\oplus V^{\tau}$$
sending $v$ to $v$. For any $[b, t]\in \Lambda_H(h)$,
$a_{\tau}([\phi(b), t]v)=[b, t]a_{\tau}(v).$

In addition, we have the restriction map
$res|^{\Lambda_G(g)}_{\Lambda_H({\tau})}: F_g(G, V)\longrightarrow
F_{\tau}(H, V)$ defined as below. Let $\beta:
S^{(V)^{\mathbb{R}}_g}\longrightarrow E((V)^{\mathbb{R}}_g\oplus
V^g)$ be an $\mathbb{R}-$equivariant map. Note that
$S^{(V)^{\mathbb{R}}_{\tau}}$ and $S^{(V)^{\mathbb{R}}_g}$ have
the same underlying space, and $(V)^{\mathbb{R}}_g\oplus V^g$ and
$(V)^{\mathbb{R}}_{\tau}\oplus V^{\tau}$ have the same underlying
vector space. $res|^{\Lambda_G(g)}_{\Lambda_H({\tau})}(\beta)$ is
defined to be the composition
\begin{equation}\begin{CD}S^{(V)^{\mathbb{R}}_{\tau}} @>{x\mapsto x}>> S^{(V)^{\mathbb{R}}_g} @>{\beta}>> E((V)^{\mathbb{R}}_g\oplus V^g)
@>{E(a_{\tau})}>> E((V)^{\mathbb{R}}_{\tau}\oplus
V^{\tau})\end{CD}\label{EGres}\end{equation}% where the last map is the restriction of $KU$. % SHOULD I CLARIFY res?
which is the identity map on the underlying spaces.

Let $\psi: K\longrightarrow H$ be another group homomorphism and
$\psi(k)=h$ for some $k\in K$. Then we have
\begin{equation}res|^{\Lambda_H(h)}_{\Lambda_K(k)}\circ res|^{\Lambda_G(g)}_{\Lambda_H(h)}
= res|^{\Lambda_G(g)}_{\Lambda_K(k)}\label{FRassos}\end{equation}

Note $S(G, V)_g$ has the same underlying space as $S(H, V)_\tau$.
Consider the join of maps
\begin{equation}res|^{\Lambda_G(g)}_{\Lambda_H(\tau)}\ast Id: F_g(G, V)\ast  S(G, V)_g\longrightarrow
F_{\tau}(H, V)\ast S(H, V)_{\tau}\end{equation} It is the identity
map on the underlying space and has the equivariant property: for
any $a\in C_H(\tau)$, $x\in H$,
\begin{equation}res|^{\Lambda_G(g)}_{\Lambda_H(\tau)}\ast
Id(\phi(a)\cdot x)=a\cdot
res|^{\Lambda_G(g)}_{\Lambda_H(\tau)}\ast
Id(x).\label{abtauequiv}\end{equation}

$res|^{\Lambda_G(g)}_{\Lambda_H(\tau)}\ast b_{\tau}$ gives a
well-defined map on the quotient space $r_{\phi, \tau}: QE_g(G,
V)\longrightarrow QE_{\tau}(H, V)$. It also has the equivariant
property as (\ref{abtauequiv}).
%: for any $a\in C_H(\tau)$, $x\in H$, $$r_g(V)(\phi(a)\cdot x)=a\cdot r_g(V)(x).$$
For any $\rho$ in $\mbox{Map}_{C_{G}(g)}(G, E_g(G, V))$, let
$R_{\phi, \tau}(\rho)$ be the composition $\begin{CD}H @>{\phi}>>
G @>{\rho}>>  QE_g(G, V) @>{r_{\phi, \tau}}>> QE_{\tau}(H, V).
\end{CD}$
$R_{\phi, \tau}(\rho)$ is $C_H(\tau)-$equivariant: $R_{\phi,
\tau}(\rho)(ah)=r_{\phi, \tau}(\rho( \phi(ah)))=r_{\phi,
\tau}(\rho( \phi(a)\phi(h)))=ar_{\phi, \tau}(\rho(\phi(h)))
=a\cdot R_{\phi, \tau}(\rho)(h),$ for any $a\in C_H(\tau)$, $h\in
H$.

For any  $g\in Im\phi$, $Res_{\phi, g}$  is defined to be
$\prod\limits_{\tau}R_{\phi, \tau}$ where $\tau$ goes over all the
$\tau\in H^{tors}$ such that $\phi(\tau)=g$.  The restriction map
is defined to be
\begin{equation}\phi^*_V:=\prod\limits_{g} Res_g: QE(G, V)\longrightarrow
QE(H, V)\label{restrictionER}\end{equation} where $g$ goes over
all
the elements in $G^{tors}$ in the image of  $\phi$. %here it becomes conjugate, not =

\begin{lemma}
(i) $R_{\phi, \tau}$ is the restriction map making the diagram
\begin{equation}\xymatrix{\mbox{Map}_{C_G(g)}(G,
QE_g(G,V))\ar[d]_{R_{\phi, \tau}}\ar[r]^>>>>>{\alpha\mapsto
\alpha(e)}&F_g(G, V)\ar[d]^{res|^{\Lambda_G(g)}_{\Lambda_H(\tau)}}\\
\mbox{Map}_{C_H(\tau)}(H, QE_{\tau}(H,
V))\ar[r]^>>>>>{\beta\mapsto \beta(e)}&F_{\tau}(H,
V)}\label{babyres}\end{equation} commute. So the restriction map
$\phi^*_V$ makes the diagram (\ref{resmoral}) commute.

(ii)Let $\phi: H\longrightarrow G$ and $\psi: K\longrightarrow H$
be two group homomorphism and $V$ a $G-$representation. Then
$\psi_V^*\circ\phi_V^*=(\phi\circ\psi)_V^*.$ The composition is
associative.

(iii) $Id_V^*: QE(G, V)\longrightarrow QE(G, V)$ is the identity
map. \label{restrictionproperties}
\end{lemma}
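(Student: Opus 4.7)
The plan is to unwind definitions for each of the three parts and rely on the analogous composition identity (\ref{FRassos}) at the level of $F_g(G,V)$.

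For (i), I would observe that by the definition of $R_{\phi,\tau}$, we have $R_{\phi,\tau}(\rho)(e) = r_{\phi,\tau}(\rho(\phi(e))) = r_{\phi,\tau}(\rho(e))$. The map $r_{\phi,\tau}\colon QE_g(G,V) \to QE_\tau(H,V)$ was defined as the descent to the quotient of $res|^{\Lambda_G(g)}_{\Lambda_H(\tau)} \ast Id$; because $F_g(G,V)$ sits inside $QE_g(G,V)$ as the image of the end $t_2=0$ of the join (per Remark \ref{ananotherEll}), the restriction of $r_{\phi,\tau}$ to this subspace is precisely $res|^{\Lambda_G(g)}_{\Lambda_H(\tau)}$. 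This gives the commuting square (\ref{babyres}), and taking products over $g$ and $\tau$ yields the outer diagram (\ref{resmoral}).

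For (ii), I would first establish the identity $r_{\psi,k} \circ r_{\phi,h} = r_{\phi\circ\psi,k}$ for $\psi(k)=h$, $\phi(h)=g$. Since both sides are induced on quotients by joins of maps, and since the join is functorial, this reduces to checking $res|^{\Lambda_H(h)}_{\Lambda_K(k)} \circ res|^{\Lambda_G(g)}_{\Lambda_H(h)} = res|^{\Lambda_G(g)}_{\Lambda_K(k)}$ on $F_g(G,V)$ (which is (\ref{FRassos})) together with the trivial composition $Id \circ Id = Id$ on the $S(G,V)_g = S(H,V)_h = S(K,V)_k$ factors. Having this, I compute $R_{\psi,k}\circ R_{\phi,h}(\rho)(x) = r_{\psi,k}(r_{\phi,h}(\rho(\phi\psi(x)))) = r_{\phi\circ\psi,k}(\rho((\phi\circ\psi)(x))) = R_{\phi\circ\psi,k}(\rho)(x)$. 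The main bookkeeping obstacle is reindexing the products: $\phi^*_V$ is indexed by $g$ in the image of $\phi$ with inner product over $h$ with $\phi(h)=g$, so the composite $\psi^*_V \circ \phi^*_V$ is a product over triples $(g,h,k)$ with $\phi(h)=g$, $\psi(k)=h$. Collapsing the redundant index $g = \phi(h)$ leaves a product over pairs $(h,k)$ with $\psi(k)=h$ and $h$ in the image of $\phi$, i.e.\ $k$ in the image of $\phi\circ\psi$ with $h=\psi(k)$, which is exactly the indexing of $(\phi\circ\psi)^*_V$. Associativity of composition follows from associativity of the underlying $r_{-,-}$ maps and of set-theoretic composition of $\phi$, $\psi$.

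For (iii), when $\phi = Id_G$ and $\tau = g$, the isomorphism $a_g\colon (V)^{\mathbb{R}}_g \oplus V^g \to (V)^{\mathbb{R}}_g \oplus V^g$ is the identity, and $res|^{\Lambda_G(g)}_{\Lambda_G(g)}$ is the identity of $F_g(G,V)$ by (\ref{EGres}). Hence $r_{Id,g} = Id_{QE_g(G,V)}$, and so $R_{Id,g}(\rho)(h) = r_{Id,g}(\rho(h)) = \rho(h)$, giving $Id^*_V = Id_{QE(G,V)}$.

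I expect part (ii) to be the main obstacle, not for any deep reason but because the indexing over torsion-element preimages across two successive homomorphisms must be matched carefully with the single product defining $(\phi\circ\psi)^*_V$; the pointwise identity $r_{\psi,k}\circ r_{\phi,h} = r_{\phi\circ\psi,k}$ is a formal consequence of (\ref{FRassos}) and the functoriality of the join-and-quotient construction.
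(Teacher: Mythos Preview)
Your proposal is correct and follows essentially the same route as the paper's proof: evaluate at $e$ for (i), reduce (ii) to the identity $r_{\psi,k}\circ r_{\phi,h}=r_{\phi\circ\psi,k}$ and then unwind the triple product, and observe that all ingredients become identities for (iii). The only cosmetic difference is that the paper dispatches the key identity in (ii) in one line by noting that every $r_{\phi,\tau}$ is the identity on underlying spaces (so any composite of them equals any other with the same source and target), whereas you route this through (\ref{FRassos}) and functoriality of the join-and-quotient; these are the same observation phrased two ways.
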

\begin{proof}
(i) $R_{\phi, \tau}(\alpha)(e)=r_{\phi,
\tau}\circ\alpha(e)=res|^{\Lambda_G(g)}_{\Lambda_H(\tau)}\alpha(e).$
So (\ref{babyres}) commutes.

(ii) Let $\rho_g: G\longrightarrow QE_g(G, V)$ be a
$C_G(g)-$equivariant map for each $g\in G^{tors}$. Note that if we
have $\psi(\sigma)=\tau$ and $\phi(\tau)=g$, then $r_{\phi,
\tau}\circ r_{\psi, \sigma}= r_{\phi\circ\psi, \sigma}$ since both
sides are identity maps on the underlying spaces. Then we have
for any $k\in K$,
\begin{align*}\psi_V^*\circ\phi_V^*(\prod\limits_{g\in
G^{tors}}\rho_g)&=\prod\limits_{g}\prod\limits_{\tau}\prod\limits_{\sigma}r_{\psi,
\sigma}\circ r_{\phi, \tau}\rho_g(\phi(\psi(k))) \\ &=
\prod\limits_{g}\prod\limits_{\tau}\prod\limits_{\sigma}
r_{\psi\circ\phi, \sigma}\rho_g(\phi\circ\psi(k))=
(\phi\circ\psi)_V^*(\prod\limits_{g\in
G^{tors}}\rho_g)\end{align*}  where  $\tau$ goes over all the
elements in $H^{tors}$ with $\phi(\tau)=g$ and $\sigma$ goes over
all the elements in $K^{tors}$ with $\psi(\sigma)=\tau$. So
$\psi_V^*\circ\phi_V^*=(\phi\circ\psi)_V^*.$

(iii) For the identity map $Id: G\longrightarrow G$, by the
formula of the restriction map,  $Id_V^*(\prod\limits_{g\in
G^{tors}}\rho_g)= \prod\limits_{g\in G^{tors}}\rho_g$, thus, is
the identity.
\end{proof}

\section{The Birth of a new global homotopy theory}\label{newglobal}

At the early beginning of equivariant homotopy theory people noticed that certain theories naturally exist not only for one particular group but for all groups
in a specific class. This observation motivated the birth of global homotopy theory. In \cite{SS} the concept of orthogonal spectra is introduced, which is defined from $\mathbb{L}-$functors with $\mathbb{L}$ the category of inner product real spaces. Each global spectra consists of compatible $G-$spectra with $G$ across the entire category of groups and they reflect any symmetry.
Globalness is a measure of the naturalness of a cohomology theory.

In Remark 4.1.2 \cite{SS}, Schwede discussed the relation between orthogonal $G-$spectra and global spectra. We have
the question whether the underlying orthogonal $G-$spectrum of the $I_G-$FSP $(QE(G, -), \eta^{QE}, \mu^{QE})$ in Theorem \ref{GorthospectraQEll}
can arise from an orthogonal spectrum.
 Ganter showed that
$\{QEll^*_G\}_G$ have the change-of-group
isomorphism, which is a good sign that quasi-elliptic cohomology may be globalized.

By the discussion in Remark 4.1.2 \cite{SS},
however, the answer to this question is no.  A
$G-$spectrum $Y$ is isomorphic to an orthogonal $G-$spectrum of
the form
 $X\langle G\rangle$ for some orthogonal spectrum $X$ if and only if for every trivial $G-$representation $V$ the $G-$action on
$Y(V)$ is trivial. $QE(V)$ is not trivial when $V$ is trivial. So
it cannot arise from an orthogonal spectrum.

Then it is even more difficult to see whether each elliptic cohomology theory,
whose form is more intricate and mysterious than quasi-elliptic cohomology, can be globalized in the current setting.

Our solution is to establish a more flexible global homotopy theory where
quasi-elliptic chomomology can fit into. We hope that it is easier to judge whether a cohomology theory, especially an
elliptic cohomology theory, can be globalized in the new theory. In addition we want to show that the new global homotopy theory is equivalent to the current global homotopy theory.

%Recall an orthogonal space is a continuous functor $Y: \mathbb{L}\longrightarrow \mathcal{T}$ to the category of
%topological spaces where $\mathbb{L}$ is the category whose objects are inner product real spaces and whose morphism set
%between two objects $V$ and $W$ are the linear isometric embeddings $L(V, W)$.

We construct in \cite{Huanthesis} a category $D_0$ to replace $\mathbb{L}$ whose objects
are $(G, V,\rho)$ with  $V$ an inner product vector space, $G$ a
compact group and $\rho$ a faithful group representations
$$\rho: G\longrightarrow O(V),$$ and whose morphism  $\phi=(\phi_1, \phi_2): (G,
V, \rho)\longrightarrow (H, W, \tau)$ consists of a linear
isometric embedding $\phi_2: V\longrightarrow W$ and a group
homomorphism $\phi_1: \tau^{-1}(O(\phi_2(V)))\longrightarrow G$,
which makes the diagram (\ref{Dmor}) commute.
\begin{equation}\xymatrix{G\ar[r]^{\rho}
&O(V)\ar[d]^{\phi_{2*}}\\
\tau^{-1}(O(\phi_2(V)))\ar[u]^{\phi_1}\ar[r]^>>>>>{\tau}
&O(W)}\label{Dmor}\end{equation} In other words, the group action
of $H$ on $\phi_2(V)$ is induced from that of $G$. Intuitively,
the category $D_0$ is obtained by adding the restriction maps
between representations into the category $\mathbb{L}$.

Instead of the category of orthogonal spaces, we study the category of $D_0-$spaces. %A $D_0-$space is a continuous functor $X: D_0\longrightarrow T$. A
%morphism of $D_0-$spaces is a natural transformation.
The category
of orthogonal spaces is a full subcategory of the category $D_0T$ of
$D_0-$spaces.
Apply the idea of diagram spectra in \cite{MMSS}, we can also
define $D_0-$spectra and $D_0-$FSP.

Combining the orthogonal $G-$spectra of quasi-elliptic cohomology
together, we get a well-defined $D_0-$spectra and $D_0-$FSP. Thus,
we can define global quasi-elliptic cohomology in the category of
$D_0-$spectra.

\begin{theorem}(Theorem 7.2.3, \cite{Huanthesis})There is a $D_0-$FSP weakly representing quasi-elliptic cohomology. \end{theorem}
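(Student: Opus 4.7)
The plan is to assemble the orthogonal $G$-spectra $(QKU(G,-), \eta^{QKU}, \mu^{QKU})$ from Corollary \ref{uptokweak} (specialized to $E = KU$) into a single $D_0$-FSP, using the $\mathcal{I}_G$-FSP structure for each fixed compact Lie group together with the restriction maps constructed in Section \ref{gaction}. On objects I would set $QKU_{D_0}(G, V, \rho) := QKU(G, V)$, where $\rho$ supplies the faithful $G$-action on $V$ needed for the construction of Section \ref{GorthospectraQEll}.

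For the action on a $D_0$-morphism $(\phi_1, \phi_2) : (G, V, \rho) \to (H, W, \tau)$, I would factor it through the intermediate object $(H', V, \rho \circ \phi_1)$, where $H' := \tau^{-1}(O(\phi_2(V)))$. The first factor $(\phi_1, \mathrm{id}_V)$ is handled by the restriction map $(\phi_1)^*_V : QKU(G, V) \to QKU(H', V)$ of Section \ref{gaction}, whose functorial compatibility is provided by Lemma \ref{restrictionproperties}. The second factor $(\mathrm{id}_{H'}, \phi_2)$ requires a natural map $QKU(H', V) \to QKU(H, W)$. Writing $W = \phi_2(V) \oplus \phi_2(V)^{\perp}$, where $\phi_2(V)^{\perp}$ is an $H'$-representation by orthogonality, I would first apply the $\mathcal{I}_{H'}$-FSP structure map obtained by combining $\eta^{QKU}$ and $\mu^{QKU}$ as in Theorem \ref{GorthoQEll} to land in $QKU(H', W)$, and then promote the target from $H'$-level to $H$-level using the change-of-group isomorphism for quasi-elliptic cohomology established in Section \ref{conqec}. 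The unit and multiplication $(\eta^{QKU}, \mu^{QKU})$ inherited from Theorem \ref{GorthoQEll} then extend to $QKU_{D_0}$, since the formulas \eqref{GetaQEll} and \eqref{muEg} are manifestly natural in both the group and representation variables and are compatible with restriction.

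Once the $D_0$-FSP structure is in place, the weak-representation claim follows immediately from Corollary \ref{uptokweak}: restricting $QKU_{D_0}$ to any single compact Lie group $G$ recovers $(QKU(G,-), \eta^{QKU}, \mu^{QKU})$, which represents $QEll^*_G$ in $GwS$. The main obstacle is the construction and verification of the representation-extension map $QKU(H', V) \to QKU(H, W)$ in the second factor of Step 2. Because $QKU(G,-)$ cannot arise from an orthogonal spectrum, as explained at the start of Section \ref{newglobal}, this map cannot be a strict homeomorphism and will at best be an $H$-weak equivalence, parallel to what happens for the restriction maps of Section \ref{gaction}. Consequently, checking functoriality of the full assignment $(\phi_1, \phi_2) \mapsto QKU_{D_0}(\phi_1, \phi_2)$ amounts to verifying commutativity of diagrams of restriction maps and FSP structure maps up to weak equivalence; while the constituent compatibilities are already recorded in Lemma \ref{restrictionproperties} and Lemma \ref{Gorthocommdiaglemma}, assembling them into a coherent natural transformation over $D_0$ is the technical heart of the proof.
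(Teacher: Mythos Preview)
The paper does not actually prove this statement: it is quoted as Theorem 7.2.3 of the author's thesis, preceded only by the one-line sketch ``Combining the orthogonal $G$-spectra of quasi-elliptic cohomology together, we get a well-defined $D_0$-spectra and $D_0$-FSP.'' So there is no detailed argument here to compare against; your outline is precisely an attempt to flesh out that sketch using the ingredients developed in Sections \ref{GorthospectraQEll} and \ref{gaction}, and in that sense it is aligned with what the paper intends.

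That said, there is a genuine gap in your ``promotion'' step $QKU(H', W) \to QKU(H, W)$ for $H' \leq H$. The change-of-group isomorphism of Section \ref{conqec} is a statement about cohomology groups, $QEll^*_G(G\times_H X) \cong QEll^*_H(X)$; it does not by itself supply a map of representing \emph{spaces} in the direction you need. The only space-level maps constructed in the paper are the restriction maps of Section \ref{gaction}, and for the inclusion $H' \hookrightarrow H$ those go the wrong way, $QKU(H, W) \to QKU(H', W)$. The paper does observe (at the start of Section \ref{gaction}) that this restriction is an $H'$-weak equivalence, so in a homotopy category you could invert it, but producing an honest $D_0$-functor requires an actual map. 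Constructing such an induction-type map at the space level, and then verifying that the resulting composites with the $\mathcal{I}_G$-FSP structure maps and restriction maps are strictly (or coherently) functorial over $D_0$, is real additional content not contained in Lemma \ref{restrictionproperties} or Lemma \ref{Gorthocommdiaglemma}; that is presumably what the cited thesis chapter supplies. You have correctly identified this as the crux, but invoking the change-of-group isomorphism alone does not close the gap.
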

%fibrant?

Equipping a homotopy theory with a model structure is like interpreting the world via philosophy. Model category theory is
an essential basis and tool to judge whether two homotopy theories describe the same world. We build several model structures on $D_0T$. First by the
theory in \cite{MMSS}, there is a level model structure on $D_0T$.
\begin{theorem} (Theorem 6.3.4, \cite{Huanthesis})

The category of $D_0-$spaces is a compactly generated topological
model category with respect to the level equivalences, level
fibrations and $q-$cofibrations. It is right proper and
left proper.

\end{theorem}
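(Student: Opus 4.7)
The plan is to invoke the standard transfer machinery for diagram categories, as carried out in MMSS, to lift the model structure on compactly generated topological spaces $\mathcal{T}$ to the functor category $D_0\mathcal{T}$. Since $D_0$ is (essentially) small, for each object $d=(G,V,\rho)$ the evaluation functor $\mathrm{ev}_d:D_0\mathcal{T}\longrightarrow\mathcal{T}$ admits a left adjoint $F_d$ given by $F_d(A)(e)=D_0(d,e)_+\wedge A$. Setting
\begin{equation*}
I_{\mathrm{lev}}=\bigcup_d F_d(\mathbf{I}),\qquad J_{\mathrm{lev}}=\bigcup_d F_d(\mathbf{J}),
\end{equation*}
where $\mathbf{I}=\{S^{n-1}_+\hookrightarrow D^n_+\}$ and $\mathbf{J}=\{D^n_+\hookrightarrow (D^n\times[0,1])_+\}$ are the standard generating (acyclic) cofibrations of $\mathcal{T}$, supplies the candidate generating sets, and the $q$-cofibrations are declared to be the retracts of relative $I_{\mathrm{lev}}$-cell complexes.

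The first step is to verify the two nontrivial conditions for the transferred structure. One must check that every relative $J_{\mathrm{lev}}$-cell complex is both a level acyclic cofibration; this follows because $\mathrm{ev}_e F_d(A) = D_0(d,e)_+\wedge A$ is a coproduct of copies of $A$ in $\mathcal{T}$, colimits in $D_0\mathcal{T}$ are computed levelwise, and coproducts of acyclic cofibrations in $\mathcal{T}$ remain acyclic cofibrations. The small object argument applies because the domains of maps in $I_{\mathrm{lev}}$ and $J_{\mathrm{lev}}$ are compact, inherited from compactness of $S^{n-1}_+$ and $D^n_+$ in $\mathcal{T}$, which yields the ``compactly generated'' part of the statement. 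The topological enrichment is automatic because the smash product and mapping space structure on $D_0\mathcal{T}$ are induced objectwise from $\mathcal{T}$, so Quillen's enriched pushout-product axiom reduces to the corresponding axiom for $\mathcal{T}$.

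The main obstacle will be the bookkeeping around the morphism structure of $D_0$: a morphism $(G,V,\rho)\to(H,W,\tau)$ packages a linear isometric embedding $\phi_2:V\hookrightarrow W$ together with a group homomorphism $\phi_1:\tau^{-1}(O(\phi_2(V)))\longrightarrow G$ going in the opposite direction, subject to the compatibility square \eqref{Dmor}. One has to verify that $D_0$ is essentially small so that the indexed coproducts above produce sets rather than proper classes, and that $F_d$ behaves well under these twisted morphisms (in particular, that the counit $F_d\mathrm{ev}_d(X)\to X$ and the levelwise structure interact as expected for the lifting arguments to go through). Once this is settled, left and right properness follow immediately: pushouts and pullbacks in $D_0\mathcal{T}$ are computed levelwise, $q$-cofibrations evaluate to Hurewicz-type closed inclusions in $\mathcal{T}$, and $\mathcal{T}$ itself is both left and right proper, so the relevant preservation of level weak equivalences transfers objectwise from $\mathcal{T}$ to $D_0\mathcal{T}$.
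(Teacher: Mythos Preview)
Your approach is exactly what the paper indicates: the theorem is simply cited from the thesis with the remark that it follows ``by the theory in \cite{MMSS}'', and your proposal is an unpacking of precisely that transfer argument for topological diagram categories.

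One correction is needed in your justification. The morphism spaces $D_0(d,e)$ are not discrete---they inherit a topology from the Stiefel manifold $L(V,W)$ of linear isometric embeddings---so $\mathrm{ev}_e F_d(A)=D_0(d,e)_+\wedge A$ is \emph{not} a coproduct of copies of $A$. The conclusion you need still holds, but for a different reason: for any space $K$, the functor $K_+\wedge(-)$ (or $K\times(-)$ in the unbased setting) preserves the generating acyclic cofibrations of $\mathcal{T}$ because those are inclusions of strong deformation retracts, and this property is stable under product with an arbitrary space. With that adjustment the rest of your outline goes through verbatim.
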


$D_0$ is a generalized Reedy category in the sense of \cite{BM}.
We can construct a Reedy model structure on $D_0T$.

\begin{theorem}(Theorem 6.4.5, \cite{Huanthesis})The Reedy cofibrations, Reedy weak equivalences and Reedy
fibrations form a model structure, the Reedy model structure, on
the category of $D_0-$spaces. \label{Reedy}
\end{theorem}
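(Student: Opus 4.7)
The strategy is to invoke the general theorem of Berger--Moerdijk which lifts a cofibrantly generated model structure along any generalized Reedy indexing category; since topological spaces form a cofibrantly generated (and even topological) model category, once the generalized Reedy structure on $D_0$ is in hand, the model structure on $D_0\mathcal{T}$ will come for free. So the bulk of the work is checking that $D_0$ really is a generalized Reedy category in the sense of \cite{BM}, which the paper has asserted but must be justified.

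First I would set up the data of a generalized Reedy structure on $D_0$. The natural degree function is $\deg(G,V,\rho):=\dim_{\mathbb{R}}V$. The direct subcategory $D_0^{+}$ should consist of those morphisms $(\phi_1,\phi_2):(G,V,\rho)\to(H,W,\tau)$ for which $\phi_1$ is an isomorphism onto $\tau^{-1}(O(\phi_2 V))$ (so $\phi_2$ is free to raise dimension, but the group data is rigid); the inverse subcategory $D_0^{-}$ should consist of those with $\phi_2$ an isometric isomorphism $V\xrightarrow{\cong} W$ (so dimension is preserved and only the group data may restrict). Note that $D_0^{+}\cap D_0^{-}$ is exactly the groupoid of isomorphisms, which is what a generalized Reedy category requires in place of the identities in a classical Reedy category.

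Next I would verify the three axioms of a generalized Reedy category. (i) Morphisms in $D_0^{+}$ strictly raise degree except for isomorphisms, and morphisms in $D_0^{-}$ preserve degree; this is clear from the definitions. (ii) Every morphism $(\phi_1,\phi_2)$ admits a factorization $(\phi_1,\phi_2)=(\psi_1^{+},\psi_2^{+})\circ(\psi_1^{-},\psi_2^{-})$ unique up to unique isomorphism: one takes the intermediate object $\bigl(\tau^{-1}(O(\phi_2 V)),V,\tau\circ\phi_1\cdot\text{(sect)}\bigr)$, sending $V$ identically into itself via the inverse piece and then into $W$ via $\phi_2$ in the direct piece. (iii) The axiom on isomorphism-invariance of degree, and the compatibility of isomorphisms with the $(+/-)$ decomposition, reduce to the fact that $O(V)$ acts freely on isometric embeddings up to the image. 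The one genuine check is essential uniqueness of factorizations; here the presence of the automorphism group of each object requires us to pass to isomorphism classes, which is exactly why we are in the generalized (not classical) Reedy setting.

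Finally, with $D_0$ confirmed as a generalized Reedy category, I would appeal to \cite{BM} to conclude: for the cofibrantly generated model category of compactly generated topological spaces, the diagram category $D_0\mathcal{T}$ inherits the Reedy model structure in which a map $f:X\to Y$ is (a) a Reedy weak equivalence iff each $f_d:X_d\to Y_d$ is a weak equivalence of spaces, (b) a Reedy cofibration iff each relative latching map $X_d\cup_{L_dX}L_dY\to Y_d$ is a cofibration whose relative latching object carries the appropriate free $\mathrm{Aut}(d)$-action, and (c) a Reedy fibration iff each relative matching map is a fibration with analogous equivariance. The hard part of the overall argument is isolating the correct direct/inverse subcategories; once that is done, the factorization and lifting axioms are formal consequences of Berger--Moerdijk and are not reproved here. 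Right/left properness is not claimed in the statement, so no further work is needed beyond the core model structure axioms.
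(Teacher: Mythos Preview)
The paper does not actually prove this theorem; it is stated with a citation to \cite{Huanthesis}, the only in-text indication of method being the sentence ``$D_0$ is a generalized Reedy category in the sense of \cite{BM}''. Your overall strategy---verify the Berger--Moerdijk axioms for $D_0$ and then invoke their existence theorem for Reedy model structures on diagram categories---is therefore exactly the route the paper points to.

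That said, your implementation of axiom (i) has a genuine gap. You take $\deg(G,V,\rho)=\dim_{\mathbb R}V$ and declare $D_0^{-}$ to consist of morphisms with $\phi_2$ an isometric isomorphism, then assert that such morphisms ``preserve degree''. But Berger--Moerdijk require every \emph{non-isomorphism} in $R^{-}$ to \emph{strictly lower} degree. When $\phi_2$ is an isomorphism one has $\tau^{-1}(O(\phi_2 V))=H$, and the commuting square forces $\phi_1\colon H\hookrightarrow G$ to be injective with $\tau=\rho\circ\phi_1$; nothing forces it to be surjective. For example, the evident morphism $(O(2),\mathbb R^{2},\mathrm{id})\to(SO(2),\mathbb R^{2},\mathrm{incl})$ lies in your $D_0^{-}$, is not an isomorphism, yet has the same degree on both sides. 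So $\dim V$ alone cannot serve as the degree function. A repair is to refine the degree to the triple $(\dim V,\dim G,\lvert\pi_0 G\rvert)$ with the lexicographic order: a proper closed inclusion $H\subsetneq G$ of compact Lie groups satisfies either $\dim H<\dim G$, or $\dim H=\dim G$ whence $H^{0}=G^{0}$ and then $\lvert\pi_0 H\rvert<\lvert\pi_0 G\rvert$. With this refinement your $D_0^{+}$ still strictly raises degree on non-isomorphisms (since then $\dim W>\dim V$), your $D_0^{-}$ now strictly lowers it, and the remainder of your outline can proceed.
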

%explain what the reedy we, reedy fibration, cofibration are?

We are constructing a global model structure on $D_0T$ Quillen
equivalent to the global model structure on the orthogonal spaces
constructed by Schwede in \cite{SS}. Moreover, other than the new unstable global homotopy theory, we
will also establish the new stable global homotopy theory.

%\newpage
\appendix
\section{Join}\label{join}
\subsection{Definition}

\begin{definition}
In topology, the join $A\ast B$ of two topological spaces $A$ and
$B$ is defined to be the quotient space $(A \times B \times [0,
1])/ R,$ where $R$ is the equivalence relation generated by $(a,
b_1, 0) \sim (a, b_2, 0)\mbox{  for all } a\in A \mbox{ and } b_1,
b_2 \in B$ and $(a_1, b, 1) \sim (a_2, b, 1)  \mbox{  for all }
a_1, a_2 \in A \mbox{ and } b \in B.$

At the endpoints, this collapses $A\times B\times \{0\}$ to $A$
and $A\times B\times \{1\}$ to $B$.
\end{definition}
The join $A\ast B$ is the homotopy colimit of the diagram
$\!\!\xymatrix{&A &A\times B\ar[r]\ar[l] &B}.$

A nice way to write points of $A\ast B$ is as formal linear
combination $t_1a+t_2b$ with $0\leq t_1, t_2\leq 1$ and
$t_1+t_2=1$, subject to the rules $0a+1b=b$ and $1a+0b=a$. The
coordinates correspond exactly to the points in $A\ast B$. %We need the topology on $A\ast B$ to make the four maps below continous
%\begin{align*}&A\ast B\longrightarrow A,\mbox{  }t_1a+t_2b\mapsto a\mbox{;  } &A\ast B\longrightarrow
%\mathbb{R},\mbox{  }t_1a+t_2b\mapsto t_1\\ &A\ast B\longrightarrow B,\mbox{  }t_1a+t_2b\mapsto b \mbox{;  }
%&A\ast B\longrightarrow \mathbb{R},\mbox{  }t_1a+t_2b\mapsto t_2.\end{align*}
\begin{proposition}
Join is associative and commutative. Explicitly, $A\ast (B\ast C)$
is homeomorphic to $(A\ast B)\ast C$, and $A\ast B$ is homeomrphic
to $B\ast A$. \label{ja}\end{proposition}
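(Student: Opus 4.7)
The plan is to use the formal linear combination representation introduced just before the statement. Points of $A\ast B$ are written $t_1a+t_2b$ with $t_1+t_2=1$, subject to the rules $0a+t_2b=b$ (independent of $a$) and $t_1a+0b=a$ (independent of $b$).

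For commutativity, I would define
\[
\tau\colon A\ast B \longrightarrow B\ast A,\qquad t_1a+t_2b\longmapsto t_2b+t_1a.
\]
Writing $\tau$ as the composite of the coordinate swap $A\times B\times[0,1]\to B\times A\times[0,1]$, $(a,b,t)\mapsto(b,a,1-t)$, followed by the quotient map defining $B\ast A$, one checks that it respects the two defining identifications (the identification at $t=0$ in $A\ast B$ matches the identification at $t=1$ in $B\ast A$, and vice versa). So $\tau$ descends to a continuous map of the quotients, and the analogous map $B\ast A\to A\ast B$ is an inverse; hence $\tau$ is a homeomorphism.

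For associativity, the natural idea is to introduce an auxiliary triple join and show both iterated joins are homeomorphic to it. Concretely, let
\[
\Delta^2=\bigl\{(r_1,r_2,r_3)\in\mathbb{R}^3_{\geq 0}\;\big|\;r_1+r_2+r_3=1\bigr\},
\]
and define $A\ast B\ast C$ as the quotient of $\Delta^2\times A\times B\times C$ by the equivalence relation generated by collapsing the $i$-th factor whenever $r_i=0$ (for example, $(0,r_2,r_3,a,b,c)\sim(0,r_2,r_3,a',b,c)$ for all $a,a'\in A$, and similarly for $r_2=0$ and $r_3=0$). Writing points formally as $r_1a+r_2b+r_3c$, I would then define
\[
\alpha\colon A\ast(B\ast C)\longrightarrow A\ast B\ast C,\qquad s_1a+s_2(t_1b+t_2c)\longmapsto s_1a+s_2t_1b+s_2t_2c,
\]
\[
\beta\colon (A\ast B)\ast C\longrightarrow A\ast B\ast C,\qquad u_1(v_1a+v_2b)+u_2c\longmapsto u_1v_1a+u_1v_2b+u_2c.
\]
Each comes from a continuous map on the product level (in the case of $\alpha$, the map $A\times B\times C\times[0,1]\times[0,1]\to A\times B\times C\times\Delta^2$ sending $(a,b,c,s_2,t_2)\mapsto(a,b,c,1-s_2,s_2(1-t_2),s_2t_2)$), and the defining identifications on the source are visibly sent to identifications on the target. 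The inverses are given piecewise by the formulas
\[
\alpha^{-1}(r_1a+r_2b+r_3c)=\begin{cases}r_1a+(1-r_1)\!\left(\tfrac{r_2}{1-r_1}b+\tfrac{r_3}{1-r_1}c\right),&r_1<1,\\ a,&r_1=1,\end{cases}
\]
and similarly for $\beta^{-1}$, both of which are continuous thanks to the identifications in $A\ast(B\ast C)$ and $(A\ast B)\ast C$ that absorb the apparent singularities at $r_1=1$ (respectively $r_3=1$). Consequently $\alpha$ and $\beta$ are homeomorphisms, and $\beta^{-1}\circ\alpha\colon A\ast(B\ast C)\to (A\ast B)\ast C$ is the desired homeomorphism.

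The main obstacle is the continuity of the inverses $\alpha^{-1}$ and $\beta^{-1}$: the formulas are only continuous after one uses the collapsing identifications in the targets $A\ast(B\ast C)$ and $(A\ast B)\ast C$ to absorb the indeterminacy that appears when a denominator like $1-r_1$ or $1-r_3$ vanishes. Verifying this carefully—equivalently, checking that both iterated joins have the quotient topology of the same product $\Delta^2\times A\times B\times C$ modulo the same collapsing relation—is the one nontrivial point; everything else is bookkeeping with the barycentric coordinates.
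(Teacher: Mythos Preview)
The paper does not supply a proof of this proposition; it is stated in the appendix as a standard fact and left without argument. Your approach via barycentric coordinates and an auxiliary ``triple join'' $A\ast B\ast C$ is the standard one, and the commutativity argument is fine.

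For associativity, however, the point you yourself flag as ``the one nontrivial point'' is a genuine obstacle that your write-up does not actually resolve. The issue is not the algebraic well-definedness of the formulas for $\alpha^{-1}$ and $\beta^{-1}$ (which your remarks about ``absorbing singularities'' do handle), but the topology: the iterated quotient defining $A\ast(B\ast C)$ need not carry the same topology as the single quotient of $\Delta^2\times A\times B\times C$, because products do not preserve quotient maps in general $\mathrm{Top}$. In particular, the map $\mathrm{id}_A\times q\times\mathrm{id}_I$ (with $q\colon B\times C\times I\to B\ast C$ the quotient) is not a quotient map for arbitrary $A$, so one cannot deduce continuity of $\alpha^{-1}$ from continuity of its lift. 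Without further hypotheses the associativity homeomorphism can genuinely fail.

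The fix is to invoke the ambient category the paper is implicitly working in: in compactly generated weak Hausdorff spaces products do preserve quotient maps, and then your argument goes through verbatim (alternatively, it holds when all spaces involved are compact Hausdorff, or locally compact Hausdorff). You should state this hypothesis explicitly and appeal to it at the step where you assert that both iterated joins have the quotient topology inherited from $\Delta^2\times A\times B\times C$.
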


\subsection{Group Action on the Join}

\begin{example}
Let $G$ be a compact Lie group. Let $A$, $B$ be $G-$spaces. Then
$A\ast B$ has a $G-$structure on it by \begin{equation}g\cdot
(t_1a+t_2b):=t_1(g\cdot a)+ t_2(g\cdot b),\mbox{  for any }g\in G,
a\in A, b\in B, \mbox{  and   }t_1, t_2\geq 0,
t_1+t_2=1.\label{gj1}\end{equation}

It's straightforward to check (\ref{gj1}) defines a continuous
group action.

\label{gpj1}\end{example}

\begin{example}
Let $G$ and $H$ be compact Lie groups. Let $A$ be a $G-$space and
$B$ a $H-$space. Then $A\ast B$ has a continuous $G\times
H-$structure on it by \begin{equation}(g, h)\cdot
(t_1a+t_2b):=t_1(g\cdot a)+ t_2(h\cdot b),\mbox{  for any }g\in G,
a\in A, b\in B, \mbox{  and }t_1, t_2\geq 0,
t_1+t_2=1.\label{gj2}\end{equation} \label{gpj2}\end{example}

\section{Equivariant Orthogonal spectra}\label{recalortho}

In Section \ref{recallGortho}, we recall the basics of equivariant
orthogonal
 spectra. There are many references for
this topic, such as \cite{BG}, \cite{MM} \cite{SS}, \cite{OP},
etc. In Section \ref{globalK} we recall the global K-theory, which
is a prominent example of global homotopy theory. Its properties
will be applied in the construction of the orthogonal $G-$spectrum
for quasi-elliptic cohomology.

\subsection{Orthogonal G-spectra}\label{recallGortho}

Let $G$ be a compact Lie group. Let $\mathcal{I}_G$ denote the
category whose objects are pairs $(\mathbb{R}^n, \rho)$ with
$\rho$ a homomorphism from $G$ to $O(n)$ giving $\mathbb{R}^n$ the
structure of a $G-$representation. Morphisms $(\mathbb{R}^m,
\mu)\longrightarrow (\mathbb{R}^n, \rho)$ are linear isometric
isomorphisms $\mathbb{R}^m\longrightarrow\mathbb{R}^n$.

Let $Top_G$ denote the category with objects based $G-$spaces and
morphisms continuous based maps.
%And recall $G\mathcal{T}$ is the category of $G-$spaces and continuous $G-$maps.

\begin{definition}An $\mathcal{I}_G-$space is a $G-$continuous functor $X: \mathcal{I}_G\longrightarrow Top_G$.
Morphisms between $\mathcal{I}_G-$spaces are natural
$G-$transformations. \end{definition}

%\begin{example} The sphere $\mathcal{I}_G-$space $S$ is the functor $V\mapsto S^V$
%which sends a representation to its one-point compactification. \end{example}
\begin{definition}An orthogonal $G-$spectrum is an $\mathcal{I}_G-$space $X$ together with a natural transformation of functors
$\mathcal{I}_G\times\mathcal{I}_G\longrightarrow Top_G$
$$X(-)\wedge S^{-}\longrightarrow X(-\oplus -)$$ satisfying
appropriate associativity and unitality diagrams. In other words,
an orthogonal $G-$spectrum is an $\mathcal{I}_G-$space with an
action of the sphere $\mathcal{I}_G-$space.
\end{definition}

\begin{definition}
For $\mathcal{I}_G-$spaces $X$ and $Y$, define the "external"
smash product $X\overline{\wedge}Y$ by
\begin{equation}
X\overline{\wedge} Y=\wedge\circ (X\times Y):
\mathcal{I}_G\times\mathcal{I}_G\longrightarrow Top_G;
\end{equation}
thus $(X\overline{\wedge} Y)(V, W)=X(V)\wedge Y(W).$
%For an $\mathcal{I}_G-$space $Y$ and an $\mathcal{I}_G\times \mathcal{I}_G-$space $Z$

\end{definition}

We have an equivariant notion of a functor with smash product
(FSP).
\begin{definition}
An $\mathcal{I}_G-$FSP is an $\mathcal{I}_G-$space $X$ with a unit
$G-$map $\eta: S\longrightarrow X$ and a natural product $G-$map
$\mu: X\overline{\wedge} X\longrightarrow X\circ\bigoplus$ of
functors $\mathcal{I}_G\times\mathcal{I}_G\longrightarrow Top_G$
such that the evident unit, associativity and centrality of unit
diagram also commutes.
%A $\mathcal{I}_G-$FSP is commutative if the diagram below commutes.
%$$\begin{CD}X(V)\wedge X(W) @>\mu>> X(V\oplus W)\\ @V\tau VV      @V {X(\tau)} VV \\
%X(W)\wedge X(V) @>\mu >> X(W\oplus V).\end{CD}$$ Note that this diagram commutes implies the centrality of unit diagram commutes.
\end{definition}

\begin{lemma}
An $\mathcal{I}_G-$FSP has an underlying $\mathcal{I}_G-$spectrum
with structure $G-$map $$\sigma=\mu\circ(id
\overline{\wedge}\eta): X\overline{\wedge}S\longrightarrow
X\circ\oplus.$$
\end{lemma}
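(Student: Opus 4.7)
The plan is to verify that $\sigma=\mu\circ(\mathrm{id}\,\overline{\wedge}\,\eta)$ satisfies the three conditions required to make $X$ an $\mathcal{I}_G$-spectrum: it is a natural $G$-transformation of functors $\mathcal{I}_G\times\mathcal{I}_G\to Top_G$, it is unital, and it is associative with respect to the sphere action. In components, $\sigma_{V,W}\colon X(V)\wedge S^{W}\to X(V\oplus W)$ is the composite
\begin{equation*}
X(V)\wedge S^{W}\xrightarrow{\mathrm{id}\wedge\eta_{W}} X(V)\wedge X(W)\xrightarrow{\mu_{V,W}} X(V\oplus W).
\end{equation*}

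First I would dispense with naturality. By hypothesis $\mu$ is a natural $G$-transformation of functors $\mathcal{I}_G\times\mathcal{I}_G\to Top_G$ and $\eta\colon S\to X$ is a natural $G$-map, so $\mathrm{id}\,\overline{\wedge}\,\eta\colon X\,\overline{\wedge}\, S\to X\,\overline{\wedge}\, X$ is a natural $G$-transformation, and composing with $\mu$ preserves naturality and $G$-equivariance.

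Next I would verify unitality: the map $\sigma_{V,0}\colon X(V)\wedge S^{0}\to X(V\oplus 0)\cong X(V)$ must agree with the canonical identification. This is precisely the right-unit triangle of the FSP: the composite $X(V)\wedge S^{0}\xrightarrow{\mathrm{id}\wedge\eta_{0}}X(V)\wedge X(0)\xrightarrow{\mu_{V,0}}X(V\oplus 0)$ is the identity (up to the canonical isomorphism $V\oplus 0\cong V$) by the unit axiom for $\mu$.

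The main obstacle is the associativity square, which asks that
\begin{equation*}
\sigma_{V\oplus W,U}\circ(\sigma_{V,W}\wedge\mathrm{id}_{S^{U}})=\sigma_{V,W\oplus U}\circ(\mathrm{id}_{X(V)}\wedge m_{W,U})
\end{equation*}
as maps $X(V)\wedge S^{W}\wedge S^{U}\to X(V\oplus W\oplus U)$, where $m_{W,U}\colon S^{W}\wedge S^{U}\to S^{W\oplus U}$ is the sphere multiplication. Expanding the left-hand side and using associativity of $\mu$ (which identifies $\mu_{V\oplus W,U}\circ(\mu_{V,W}\wedge\mathrm{id})$ with $\mu_{V,W\oplus U}\circ(\mathrm{id}\wedge\mu_{W,U})$), the claim reduces to the equality
\begin{equation*}
\mu_{W,U}\circ(\eta_{W}\wedge\eta_{U})=\eta_{W\oplus U}\circ m_{W,U},
\end{equation*}
i.e.\ that $\eta\colon S\to X$ is a multiplicative map from the sphere $\mathcal{I}_G$-FSP to $X$. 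This is encoded in the unit/centrality-of-unit diagram of the FSP axioms: the unit map intertwines the sphere multiplication with $\mu$. Once this identity is invoked, the two composites agree, and the associativity of $\sigma$ follows. This completes the check that $(X,\sigma)$ is an orthogonal $G$-spectrum.
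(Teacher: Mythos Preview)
The paper states this lemma without proof; it appears in an appendix recalling standard background on orthogonal $G$-spectra, and the result is a well-known observation going back to Mandell--May. Your verification is correct and is the standard argument.

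One remark on the step you flagged as the main obstacle: the identity $\mu_{W,U}\circ(\eta_W\wedge\eta_U)=\eta_{W\oplus U}\circ m_{W,U}$ is exactly what the paper means by the ``unit diagram'' for an $\mathcal{I}_G$-FSP. You can confirm this by looking at Lemma~\ref{Gorthocommdiaglemma}, where the paper verifies the FSP axioms for $QE$: the first diagram checked there (labelled ``Unity'') is precisely the commutativity of
\[
\begin{CD}
S^V\wedge S^W @>{\eta\wedge\eta}>> X(V)\wedge X(W)\\
@VV{\cong}V @VV{\mu}V\\
S^{V\oplus W} @>{\eta}>> X(V\oplus W).
\end{CD}
\]
So your reduction of the associativity square to this identity is on the nose, and no further appeal to centrality is needed.
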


\subsection{Orthogonal spectra}\label{ght}

The global homotopy theory is established to better describe
certain theories naturally exists not only for a particular group,
but for all groups of certain type in a compatible way. Some
prominent examples of this are equivariant stable homotopy,
equivariant K-theory, and equivariant bordism.

The idea of global orthogonal spectra was first inspired in the
paper \cite{GP} by Greenlees and May where they introduce the
concept of global $\mathcal{I}_*-$functors with smash product. The
idea is developed by Mandell and May \cite{MM} and Bohmann
\cite{BG}. Schwede develops another modern approach of global
homotopy theory using a different categorical framework in
\cite{SS}, which is the main reference for Section \ref{ght}. For
definition of orthogonal spectra in detail, please refer
\cite{OP}, \cite{MMSS}, \cite{SS}.

First we recall  the definition of orthogonal spaces. Let
$\mathbb{L}$ denote the category whose objects are inner product
real spaces and whose morphism set between two objects $V$ and $W$
are the linear isometric embeddings $L(V, W)$.

\begin{definition}An orthogonal space is a continuous functor $Y: \mathbb{L}\longrightarrow \mathcal{T}$ to the category of topological spaces. A morphism of orthogonal
spaces is a natural transformation. We denote by spc the category
of orthogonal spaces.\label{orthospdef}\end{definition}

Orthogonal spectra is the stabilization of orthogonal spaces.

Let $\mathbb{O}$ denote the category whose objects are inner
product real spaces and the morphisms $O(V, W)$ between two
objects $V$ and $W$ is the Thom space of the total space $$\xi(V,
W):=\{(w, \phi)\in W\times L(V, W)| W\perp \phi(V)\}$$ of the
orthogonal complement vector bundle, whose structure map $\xi(V,
W)\longrightarrow L(V, W)$ is the projection to the second factor.

\begin{definition}An orthogonal spectrum is a based continuous functor from $\mathbb{O}$ to the category  of based compactly generated weak Hausdorff spaces.
A morphism is a natural transformation of functors. Let $Sp$
denote the category of orthogonal spectrum.
\end{definition}

\begin{definition}
Given an orthogonal spectrum $X$ and a compact Lie group $G$, the
collection of $G-$spaces $X(V)$, for $V$ a $G-$representation, and
the equivariant structure maps $\sigma_{V,W}$ form an orthogonal
$G-$spectrum. This orthogonal $G-$spectrum $$X\langle
G\rangle=\{X(V), \sigma_{V, W}\}$$ is called \textit{the
underlying orthogonal $G-$spectrum} of $X$.
\label{underlyingGspectr}
\end{definition}

\subsection{Global K-theory and its variations} \label{globalK}

A classical example of orthogonal spectra is global K-theory.
Quasi-elliptic cohomology can be expressed in terms of equivariant
K-theory. And this example is especially important for our
construction.

In \cite{JK} Joachim constructs $G$-equivariant K-theory as an
orthogonal $G$-spectrum $\mathbb{K}_G$ for any compact Lie group
$G$. In fact it is the only known $E_{\infty}-$version of
equivariant complex K-theory when $G$ is a compact Lie group.

For any real $G-$representation $V$, let $\mathbb{C}l_V$ be the
Clifford algebra of $V$ and $\mathcal{K}_V$ be the $G-C^*-$algebra
of compact operators on $L^2(V)$. Let $s:= C_0(\mathbb{R})$ be the
graded $G-C^*-$algebra of continuous functions on $\mathbb{R}$
vanishing at infinity with trivial $G-$action. Then the orthogonal
$G-$spectrum for equivariant K-theory defined by  Joachim is the
lax monoidal functor given by
$$\mathbb{K}_G(V)=Hom_{C^*}(s,
\mathbb{C}l_V\otimes\mathcal{K}_V)$$ of $\mathbb{Z}/2-$graded
$\ast-$homomorphisms from $s$ to
$\mathbb{C}l_V\otimes\mathcal{K}_V$.

 Bohmann showed in her paper
\cite{BG} that Joachim's model is "global", i.e.  $\mathbb{K}$ is
an orthogonal $\mathcal{G}-$spectrum. For more detail, please read
\cite{BG} for reference.

Schwede's construction of global K-theory $KR$ in \cite{SS} is a
unitary analog of the construction by Joachim. It is an
ultra-commutative ring spectrum whose $G-$homotopy type realizes
Real $G-$equivariant periodic K-theory. He also shows that the
spaces in the orthogonal spectrum $KR$ represent Real equivariant
K-theory.

For any complex inner product space $W$, let $\Lambda(W)$ be the
exterior algebra $W$ and $Sym(W)$ the symmetric algebra of it. The
tensor product $$\Lambda(W)\otimes Sym(W)$$ inherits a hermitian
inner product from $W$ and it's $\mathbb{Z}/2-$graded by even and
odd exterior powers. Let $\mathcal{H}_W$ denote the Hilbert space
completion of $\Lambda(W)\otimes Sym(W)$. Let $\mathcal{K}_W$ be
the $C^*-$algebra of compact operators on $\mathcal{H}_W$. The
orthogonal spectrum $KR$ is defined to be the lax monoidal functor
$$KR(W)=Hom_{C^*}(s,\mathcal{K}_W).$$

Let $uW$ denote the underlying euclidean vector space of $W$.
There is an isomorphism of $\mathbb{Z}/2-$graded $C^*-$algebras
$$Cl(uW)\otimes_{\mathbb{R}}\mathcal{K}(L^2(W))\cong
\mathcal{K}_W.$$ So we get a homeomorphism $$KR(W)\cong
Hom_{C^*}(s,
Cl(uW)\otimes_{\mathbb{R}}\mathcal{K}(L^2(W)))=\mathbb{K}(uW).$$

We have the relations below between the global Real K-theory $KR$,
periodic unitary K-theory $KU$ and periodic orthogonal real
K-theory $KO$.
$$KU=u(KR);\mbox{  } KO=KR^{\psi}.$$ %p295, check Joachim
In \cite{SS},  Schwede shows that the spaces in the orthogonal
spectrum $KR$ represent real equivariant K-theory.

\begin{theorem}\label{globalKtheorem}
For a compact Lie group $G$, a "sufficiently large" (i.e.
faithful) real $G-$representation $V$ and a compact $G-$space $B$,
there is a bijection $\Psi_{G, B, V}: K_G(B)\longrightarrow [B_+,
KU(V)]^G$ that is natural in $B$.
%The left hand side is the real $G-$equivariant $K-$group of $B$.
\end{theorem}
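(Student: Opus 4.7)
The plan is to reduce the statement to a known representability theorem for equivariant K-theory via Joachim's model. First I would unwind the definitions given in Section \ref{globalK}: since $KU = u(KR)$ and the excerpt records the homeomorphism $KR(W) \cong \mathbb{K}(uW)$ identifying Schwede's model with Joachim's, we have a natural $G$-homeomorphism $KU(V) \cong \mathbb{K}_G(V)$ where Joachim's model is $\mathbb{K}_G(V) = \mathrm{Hom}_{C^*}(s, \mathbb{C}l_V \otimes \mathcal{K}_V)$. So it suffices to exhibit a natural bijection $K_G(B) \to [B_+, \mathbb{K}_G(V)]^G$ when $V$ is a faithful real $G$-representation.

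Second, I would import the equivariant Atiyah--J\"anich theorem (in the form due to Segal, and used by Joachim): for any complete real $G$-universe $\mathcal{U}$, the space of $G$-equivariant odd self-adjoint Fredholm operators on $L^2(\mathcal{U}) \otimes \mathbb{C}l_\mathcal{U}$ is a classifying space for $K_G(-)$ on compact $G$-spaces. The bridge to Joachim's model is the functional-calculus bijection between graded $*$-homomorphisms $s = C_0(\mathbb{R}) \to \mathbb{C}l_V \otimes \mathcal{K}_V$ and such Fredholm operators, sending $\varphi$ to $\varphi(F)$. Concretely, I would construct $\Psi_{G,B,V}$ by the standard index recipe: a virtual $G$-bundle $[E] - [F]$ on $B$ yields, after embedding into trivial bundles built from $L^2(V)$, a continuous family of $G$-equivariant Fredholm operators, which via functional calculus becomes a $G$-map $B_+ \to \mathbb{K}_G(V)$.

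Third, the decisive point is to verify that faithfulness of $V$ really is enough, even though the classical representability theorem is phrased in terms of a complete $G$-universe. Since $B$ is compact, only finitely many isotropy subgroups $H \leqslant G$ occur, and for each such $H$ only finitely many irreducible $H$-representations appear in the relevant $G$-bundles. Because $V$ is faithful, the complexification $V_{\mathbb{C}}$ contains all irreducible $G$-representations as summands of $Sym(V_{\mathbb{C}}) \otimes \Lambda(V_{\mathbb{C}})$, so $L^2(V)$ is large enough to realize every bundle needed over $B$. This is what licenses the reduction from a complete universe to the single finite-dimensional $V$.

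The main obstacle I expect is surjectivity combined with naturality: injectivity follows once one identifies homotopy classes of Fredholm families with indices in $K_G(B)$, but surjectivity requires showing that every $G$-homotopy class of maps $B_+ \to \mathbb{K}_G(V)$ arises from a virtual bundle, which demands an equivariant approximation argument ensuring that Fredholm families over $B$ can be stably deformed to block-diagonal form with finite-dimensional kernels and cokernels. This is where the faithfulness of $V$ is used essentially, and it is the step one must execute carefully; the rest of the argument is formal manipulation of the identifications $KU(V) \cong \mathbb{K}_G(V) \cong \mathrm{Fred}^G(L^2(V) \otimes \mathbb{C}l_V)$ and the classical construction of the index map.
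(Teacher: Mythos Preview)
The paper does not prove Theorem~\ref{globalKtheorem}. It is stated in Appendix~\ref{globalK} as a background result, attributed to Schwede: the sentence immediately preceding the theorem reads ``In \cite{SS}, Schwede shows that the spaces in the orthogonal spectrum $KR$ represent real equivariant K-theory,'' and no argument is supplied. So there is no proof in the paper to compare your proposal against.

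Your outline is broadly in the spirit of how Schwede handles this in \cite{SS}, passing through the identification $KR(W)\cong\mathbb{K}(uW)$ with Joachim's model and invoking an equivariant Fredholm-operator representability theorem. One caution: your third step asserts that faithfulness of $V$ guarantees that $Sym(V_{\mathbb{C}})\otimes\Lambda(V_{\mathbb{C}})$ contains all irreducible $G$-representations, which is the content of an equivariant Peter--Weyl type statement and is what makes $\mathcal{H}_V$ a complete $G$-universe; that is correct, but it is a nontrivial input you should cite rather than treat as obvious. The surjectivity step you flag as the main obstacle is indeed the heart of the matter, and Schwede's argument handles it via an explicit comparison with a configuration-space or Fredholm model rather than an ad hoc approximation; if you were to write this out you would want to follow that route rather than improvise the deformation argument.
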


We will use the orthogonal spectrum $KU$ in the construction of
orthogonal quasi-elliptic cohomology.
\begin{definition} An orthogonal $G-$representation is called ample if its
complexified symmetric algebra is complete complex $G-$universe.
\end{definition}

\begin{theorem}
(i) Let $G$ be a compact Lie group and $V$ an orthogonal
$G-$representation. For  every ample $G-$representation $W$, the
adjoint structure map
$$\widetilde{\sigma}^K_{V, W}: KU(V)\longrightarrow \mbox{Map}(S^W, KU(V\oplus
W))$$ is a $G-$weak equivalence.\\ (ii) Let $G$ be an augmented
Lie group and $V$ a real $G-$representation such that $Sym(V)$ is
a complete real $G-$universe. For every real $G-$representation
$W$ the adjoint structure map $$\widetilde{\sigma}^K_{V, W}:
KR(V)\longrightarrow \mbox{Map}(S^W, KR(V\oplus W))$$ is a
$G-$weak equivalence. \label{KUGweak}
\end{theorem}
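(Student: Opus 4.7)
\noindent\textbf{Proof proposal for Theorem \ref{KUGweak}.}

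The plan is to check the $G$-weak equivalence pointwise on fixed points: for every closed subgroup $H\leqslant G$ and every integer $n\geqslant 0$, show that $\widetilde{\sigma}^K_{V,W}$ induces a bijection
\[
\pi_n\bigl(KU(V)^H\bigr)\;=\;[S^n,KU(V)]^H \longrightarrow [S^n\wedge S^W, KU(V\oplus W)]^H\;=\;\pi_n\bigl(\mbox{Map}(S^W, KU(V\oplus W))^H\bigr).
\]
The key observation is that ampleness is inherited by closed subgroups: if $W$ is ample as a $G$-representation, then its restriction is ample, hence faithful, as an $H$-representation, so $V\oplus W$ is faithful as an $H$-representation for every closed $H\leqslant G$. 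Theorem \ref{globalKtheorem} then identifies the right-hand side with the reduced equivariant $K$-group $\widetilde{K}_H(S^n\wedge S^W)$.

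First I would treat the case where $V$ itself is faithful as an $H$-representation. Then the same representation theorem identifies the left-hand side with $\widetilde{K}_H(S^n)$, and I would verify that under these identifications $\widetilde{\sigma}^K_{V,W}$ becomes multiplication by the $K$-theoretic Thom class $\tau_W\in\widetilde{K}_H(S^W)$; this is a matter of unpacking Joachim's definition of the structure map in terms of Clifford multiplication. Equivariant Bott periodicity, realized in Joachim's model by the invertibility of the Dirac-type $KK$-element associated to the $Cl(W)$-module $L^2(W)$, then shows that this Thom map is an isomorphism. The case when $V$ is not faithful as an $H$-representation is reduced to the faithful case by a two-out-of-three argument: comparing $\widetilde{\sigma}^K_{V,W}$ with the iterated adjoint structure map $\widetilde{\sigma}^K_{V,W\oplus W}$ and with $\widetilde{\sigma}^K_{V\oplus W, W}$, both of whose target representations are faithful for $H$, and invoking naturality of the structure maps together with the multiplicativity diagrams in the $\mathcal{I}_G$-FSP $KU$.

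For part (ii) the same outline applies verbatim, but using $KR$ in place of $KU$ and the Real representation theorem for $KR$; the hypothesis that $Sym(V)$ is a complete real $G$-universe (and similarly for $V\oplus W$) plays the role that ampleness played in part (i), ensuring that $KR(V)$ and $KR(V\oplus W)$ represent Real equivariant $K$-theory on compact $H$-spaces for all closed $H\leqslant G$.

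The main obstacle, in both parts, is the Thom/Bott isomorphism step for a real (respectively Real) $H$-representation in complex (respectively Real) equivariant $K$-theory. The virtue of Joachim's $C^{\ast}$-algebraic model is precisely that this isomorphism is built in: the Thom class is the class of the canonical $\ast$-homomorphism $s\to Cl(W)\otimes\mathcal{K}(L^2(W))$, and its invertibility is a direct $KK$-theoretic computation rather than an external input. Once this is in hand, the rest of the argument is bookkeeping with fixed points and naturality.
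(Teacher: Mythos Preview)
The paper does not prove this theorem. It is stated in Appendix~\ref{globalK} as a background result quoted from Schwede's global homotopy theory \cite{SS}; no argument is given. So there is nothing in the paper to compare your proposal against.

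On the substance of your outline: the strategy of checking $H$-fixed points, invoking Theorem~\ref{globalKtheorem} when the relevant representation is faithful for $H$, and identifying the adjoint structure map with a Thom/Bott map is the standard route and is fine when $V$ itself is faithful for $H$. That case is exactly how the result is used elsewhere in the paper (see the proofs of Proposition~\ref{newF}(iii) and Proposition~\ref{fibrantweakequiv}, where the source representation is always faithful).

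Your reduction for the non-faithful case, however, does not close. You propose a two-out-of-three argument using the factorization
\[
KU(V)\xrightarrow{\ \widetilde{\sigma}^K_{V,W}\ }\mbox{Map}(S^W,KU(V\oplus W))\xrightarrow{\ \mbox{Map}(S^W,\widetilde{\sigma}^K_{V\oplus W,W})\ }\mbox{Map}(S^{W\oplus W},KU(V\oplus W\oplus W)),
\]
noting that the second map is a weak equivalence since $V\oplus W$ is faithful. But to conclude that the first map is a weak equivalence you need the composite $\widetilde{\sigma}^K_{V,W\oplus W}$ to be one, and its source is still $KU(V)$ with $V$ not faithful for $H$; you have not established that case, so the argument is circular. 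Concretely, take $V=0$: in Schwede's (or Joachim's) model $KU(0)$ is a two-point space, so $[S^0,KU(0)]^H$ has two elements, whereas $[S^W,KU(W)]^H\cong R(H)$; the adjoint structure map cannot be a bijection on $\pi_0$ of $H$-fixed points. This suggests the hypothesis in the stated theorem is meant to be on $V$ (ample) rather than on $W$, which is consistent with how the paper actually applies it. If you rewrite the hypothesis that way, your faithful-case argument already covers everything and the problematic reduction step disappears.
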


\section{Faithful representation of $\Lambda_G(g)$}\label{reallambda}
We will apply the orthogonal spectrum $KU$ of global K-theory to
construct the orthogonal $G-$spectrum of $QE^*_G$. As indicated in
Theorem \ref{globalKtheorem}, we will need a faithful
$\Lambda_G(g)-$representation. Thus, before construction in
Section \ref{anotherweaksp} and \ref{Gorthospstructure}, we
discuss complex and real $\Lambda_G(\sigma)-$representations in
Section \ref{prere} and \ref{realdef} respectively.

\subsection{Preliminaries: faithful representations of
$\Lambda_{G}(g)$}\label{prere} As shown in Theorem
\ref{globalKtheorem}, $KU(V)$ represents $G-$equivariant complex
K-theory when $V$ is a faithful $G-$representation. In this
section, we construct a faithful
$\Lambda_G(\sigma)-$representation from a faithful
$G-$representation.

Let $G$ be a compact Lie group and  $\sigma\in G^{tors}$ with
order $l$. Let $\rho$ be a complex $G-$representation with
underlying space $V$. Let $i: C_G(\sigma)\hookrightarrow G$ denote
the inclusion. %The restriction $i^*V$ is a $C_G(\sigma)-$representation.
Let $\{\lambda\}$ denote all the irreducible complex
representations of $C_G(\sigma)$. As said in \cite{FH}, we have
the decomposition of a representation into its isotypic components
$i^*V\cong\bigoplus_{\lambda}V_{\lambda}$ where $V_{\lambda}$
denotes the sum of all subspaces of $V$ isomorphic to $\lambda$.
Each $V_{\lambda}=Hom_{C_G(\sigma)}(\lambda,
V)\otimes_{\mathbb{C}}\lambda$ is unique as a subspace. Note that
$\sigma$ acts on each $V_{\lambda}$ as a diagonal matrix.

Each $V_{\lambda}$ can be equipped with a
$\Lambda_G(\sigma)-$action. Each $\lambda(\sigma)$ is of the form
$e^{\frac{2\pi i m_{\lambda} }{l}}I$ with $0< m_{\lambda}\leq l$
and $I$ the identity matrix. As shown in Remark \ref{lambdabasis},
we have the well-defined complex
$\Lambda_G(\sigma)-$representations
$V_{\lambda})_{\sigma}:=V_{\lambda}\odot_{\mathbb{C}}
q^{\frac{m_{\lambda}}{l}}$ and
\begin{equation}(V)_{\sigma}:=\bigoplus_{\lambda}V_{\lambda}\odot_{\mathbb{C}}
q^{\frac{m_{\lambda}}{l}}\label{vgrepresentation}\end{equation}
Each $(V_{\lambda})_{\sigma}$ is the isotypic component of
$(V)_{\sigma}$ corresponding to the irreducible representation
$\lambda\odot_{\mathbb{C}} q^{\frac{m}{l}}$.
%The underlying vector space of $(V)_{\sigma}$ has the same dimension as $V$.

\begin{proposition}
Let $V$ be a faithful $G-$representation. And let $\sigma\in
G^{tors}$.

(i) If $V$ contains a trivial subrepresentation, $(V)_{\sigma}$ is
a faithful $\Lambda_G(\sigma)-$representation.

(ii) $(V)_{\sigma}\oplus (V)_{\sigma}\otimes_{\mathbb{C}}q^{-1}$
is a faithful $\Lambda_G(\sigma)-$representation.

(iii) $(V)_{\sigma}\oplus V^{\sigma}$ is a faithful
$\Lambda_G(\sigma)-$representation.

\label{farithlambda}
\end{proposition}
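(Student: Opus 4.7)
The plan is to prove each of (i)--(iii) by showing that any $[h, t] \in \Lambda_G(\sigma)$ acting as the identity on the candidate representation $W$ must satisfy $(h, t) \in \langle (\sigma, -1) \rangle$, i.e.\ $[h, t] = [e, 0]$ in the quotient $\Lambda_G(\sigma) = (C_G(\sigma) \times \mathbb{R})/\langle (\sigma, -1) \rangle$. By Lemma \ref{cl} and the recipe in Remark \ref{lambdabasis}, $W$ decomposes into $\Lambda_G(\sigma)$-irreducibles of the form $V_\lambda \odot q^{m/l}$, with $\lambda$ an irreducible $C_G(\sigma)$-representation and $m \in \mathbb{Z}$ chosen so that $\lambda(\sigma) = e^{2\pi i m/l} I$. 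Triviality of $[h, t]$ on such a summand is the scalar equation $\lambda(h) = e^{-2\pi i m t/l} I$, which I would collect over all summands and analyze in two steps.

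Step~1 is to force $t \in \mathbb{Z}$, and the mechanism is what differs among the three cases. In (i), a trivial $G$-subrepresentation of $V$ contributes a trivial $C_G(\sigma)$-piece to $(V)_\sigma$ tagged with $m = l$ (this is what the convention $0 < m_\lambda \leq l$ forces, in contrast to the alternative $m = 0$), i.e.\ a copy of the character $q$; triviality on this piece gives $e^{2\pi i t} = 1$. In (ii), the two summands $(V)_\sigma$ and $(V)_\sigma \otimes q^{-1}$ attach to each $\lambda$ two $\mathbb{R}$-characters differing by $q^{\pm 1}$, so dividing the triviality conditions on the same $\lambda$ isolates $e^{-2\pi i t} = 1$. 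In (iii), the summand $V^\sigma$ carries the $\Lambda_G(\sigma)$-action with $\mathbb{R}$ acting trivially (via $[g, t]\cdot x = g\cdot x$), contributing $\lambda \odot 1$ for each $\lambda$ appearing in $V^\sigma$, while the same $\lambda$'s appear in $(V)_\sigma$ as $\lambda \odot q$; combining the two triviality constraints again yields $e^{2\pi i t} = 1$. In each case I would then set $t = k \in \mathbb{Z}$.

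Step~2 is a standard character-theoretic reduction: the remaining constraint rewrites as $\lambda(h) = \lambda(\sigma^{-k})$, i.e.\ $\lambda(h \sigma^k) = I$, for every $\lambda$ appearing in $V|_{C_G(\sigma)}$. Since $V$ is a faithful $G$-representation, $V|_{C_G(\sigma)}$ is a faithful $C_G(\sigma)$-representation, so $h \sigma^k = e$ and hence $h = \sigma^{-k}$. Then $(h, t) = (\sigma^{-k}, k) \in \langle (\sigma, -1) \rangle$, so $[h, t] = [e, 0]$, as required.

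The main obstacle is Step~1: one really needs the specific structural input in each case (a trivial subrepresentation in (i), the $q^{-1}$-twisted copy in (ii), or the $V^\sigma$ summand in (iii)) to rule out the nontrivial characters of the compact subgroup $\mathbb{R}/l\mathbb{Z} \subset \Lambda_G(\sigma)$ that could otherwise act trivially on $(V)_\sigma$ alone. The convention $m_\lambda \in (0, l]$ appearing in the definition of $(V)_\sigma$ is also essential in (i) and (iii), precisely so that trivial-on-$\sigma$ $C_G(\sigma)$-summands of $V$ contribute the nontrivial $\mathbb{R}$-character $q$ rather than the trivial one. Step~2 is then essentially automatic from faithfulness of $V|_{C_G(\sigma)}$.
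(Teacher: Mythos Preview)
Your proposal is correct and matches the paper's two-step argument: isolate the $\mathbb{R}$-parameter using the specific structural input in each case, then use faithfulness of $V|_{C_G(\sigma)}$ to force the $C_G(\sigma)$-component. The only difference is that for (iii) the paper packages your direct comparison as a reduction to (ii), noting that $V^\sigma = (V^\sigma)_\sigma \otimes_{\mathbb C} q^{-1}$ so that $(V)_\sigma \oplus V^\sigma$ contains a subrepresentation of the shape handled in (ii); the underlying computation is the same as yours.
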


\begin{proof}

(i) Let $[a, t]\in \Lambda_G(\sigma)$ be an element acting
trivially on $(V)_{\sigma}$. Assume $t\in [0, 1)$. On
$(V_1)_{\sigma},$ $[a, t]v_0=e^{2\pi i t}v_0=v_0$. So $t=0$. Then
on the whole space $V_{\sigma}$, since $C_G(\sigma)$ acts
faithfully on it and for any $v\in V_{\sigma}$, $[a, 0]\cdot
v=a\cdot v=v$, then $a=e$.

So $(V)_{\sigma}$ is a faithful
$\Lambda_G(\sigma)-$representation.

(ii) Let $[a, t]\in \Lambda_G(\sigma)$ be an element acting
trivially on $V_{\sigma}$. Consider the subrepresentations
$(V_{\lambda})_{\sigma}$ and
$(V_{\lambda})_{\sigma}\otimes_{\mathbb{C}}q^{-1}$ of
$(V)_{\sigma}\oplus (V)_{\sigma}\otimes_{\mathbb{C}}q^{-1}$
respectively.  Let $v$ be an element in the underlying vector
space  $V_{\lambda}$. On $(V_{\lambda})_{\sigma}$, $[a, t]\cdot
v=e^{\frac{2\pi i m_{\lambda}t}{l}}a\cdot v= v$; and on
$(V_{\lambda})_{\sigma}\otimes_{\mathbb{C}}q^{-1}$, $[a, t]\cdot
v=e^{\frac{2\pi i m_{\lambda}t}{l}-2\pi i t}a\cdot v=v$. So we get
$e^{2\pi it}\cdot v=v$. Thus, $t=0$. $C_G(\sigma)$ acts faithfully
on $V$, so it acts faithfully on $(V)_{\sigma}\oplus
(V)_{\sigma}\otimes_{\mathbb{C}}q^{-1}$. Since $[a, 0]\cdot w=w$,
for any $w\in (V)_{\sigma}\oplus
(V)_{\sigma}\otimes_{\mathbb{C}}q^{-1}$, so $a=e$.

Thus, $(V)_{\sigma}\oplus (V)_{\sigma}\otimes_{\mathbb{C}}q^{-1}$
is a faithful $\Lambda_G(\sigma)-$representation.

(iii)  Note that $V^{\sigma}$ with the trivial $\mathbb{R}-$action
is the representation
$(V^{\sigma})_{\sigma}\otimes_{\mathbb{C}}q^{-1}$. The
representation $(V)_{\sigma}\oplus V^{\sigma}$ contains a
subrepresentation $(V^{\sigma})_{\sigma}\oplus
(V^{\sigma})_{\sigma}\otimes_{\mathbb{C}}q^{-1}$, which is a
faithful $\Lambda_G(\sigma)-$representation by the second
conclusion of Proposition \ref{farithlambda}. So
$(V)_{\sigma}\oplus V^{\sigma}$ is faithful.\end{proof}

\begin{lemma}
For any $\sigma\in G^{tors}$,  $(-)_{\sigma}$ defined in
(\ref{vgrepresentation}) is a functor from the category of
$G-$spaces to the category of $\Lambda_G(\sigma)-$spaces.
Moreover, $(-)_{\sigma}\oplus (-)_{\sigma}\otimes_{\mathbb{C}}
q^{-1}$ and $(-)_{\sigma}\oplus (-)^{\sigma}$ in Proposition
\ref{farithlambda} are also well-defined functors from the
category of $G-$spaces to the category of
$\Lambda_G(\sigma)-$spaces.
\end{lemma}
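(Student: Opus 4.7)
The plan is to verify functoriality of $(-)_{\sigma}$ in two levels (objects and morphisms), then deduce functoriality of the derived constructions. Throughout, I read ``$G$-spaces'' here as the category of (finite-dimensional) complex $G$-representations with $G$-linear maps, since the construction $(-)_{\sigma}$ relies crucially on the isotypic decomposition into $C_G(\sigma)$-subrepresentations.

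For the object level, I would start with a $G$-representation $V$ and verify that $(V)_{\sigma}=\bigoplus_{\lambda}V_{\lambda}\odot_{\mathbb{C}} q^{m_{\lambda}/l}$ is unambiguously a $\Lambda_G(\sigma)$-representation. The isotypic decomposition $i^*V\cong\bigoplus_{\lambda}V_{\lambda}$ is canonical by Schur's lemma, and on each $V_{\lambda}$ the central element $\sigma$ acts by $e^{2\pi i m_{\lambda}/l}I$ (forcing $m_{\lambda}$ to be well defined mod $l$). Thus on $V_{\lambda}\odot_{\mathbb{C}} q^{m_{\lambda}/l}$ the prescribed $C_G(\sigma)\times\mathbb{R}$-action descends to $\Lambda_G(\sigma)$ because the relation $(\sigma,-1)$ acts as $e^{2\pi im_{\lambda}/l}\cdot e^{-2\pi im_{\lambda}/l}=1$, which is exactly the well-definedness check underlying Remark \ref{lambdabasis}.

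For morphisms, given a $G$-linear map $f\colon V\to W$, I would first restrict to $C_G(\sigma)$ and observe that any $C_G(\sigma)$-linear map preserves the isotypic decomposition, i.e.\ $f(V_{\lambda})\subseteq W_{\lambda}$. The key point is then that $f|_{V_{\lambda}}$ is $\mathbb{C}$-linear, so the induced map $f|_{V_{\lambda}}\otimes\mathrm{id}\colon V_{\lambda}\odot q^{m_{\lambda}/l}\to W_{\lambda}\odot q^{m_{\lambda}/l}$ commutes with the $\mathbb{R}$-scalar action; combined with $C_G(\sigma)$-equivariance, this makes $(f)_{\sigma}:=\bigoplus_{\lambda}(f|_{V_{\lambda}}\otimes\mathrm{id})$ a $\Lambda_G(\sigma)$-equivariant map. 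Functoriality axioms are immediate: $(\mathrm{id}_V)_{\sigma}=\mathrm{id}_{(V)_{\sigma}}$ and $(g\circ f)_{\sigma}=(g)_{\sigma}\circ (f)_{\sigma}$ follow directly from the analogous identities for restriction to each isotypic component.

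For the derived constructions, I would note that $\otimes_{\mathbb{C}} q^{-1}$ is functorial on the category of $\Lambda_G(\sigma)$-representations (it is twist by a fixed one-dimensional representation), that the diagonal $\oplus$-construction $X\mapsto F(X)\oplus G(X)$ is a functor whenever $F$ and $G$ are, and that $(-)^{\sigma}$ is a well-known functor from $G$-representations to $C_G(\sigma)$-representations, whose image carries a natural $\Lambda_G(\sigma)$-structure with trivial $\mathbb{R}$-action (consistent with $[\sigma,0]=[e,1]$ acting trivially on $V^{\sigma}$). Combining these, both $(-)_{\sigma}\oplus (-)_{\sigma}\otimes_{\mathbb{C}}q^{-1}$ and $(-)_{\sigma}\oplus (-)^{\sigma}$ are functors to $\Lambda_G(\sigma)$-representations. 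The main (mild) obstacle is the verification that the $\Lambda_G(\sigma)$-action on each $V_{\lambda}\odot q^{m_{\lambda}/l}$ genuinely factors through the quotient by $\langle(\sigma,-1)\rangle$; everything else is routine linear algebra.
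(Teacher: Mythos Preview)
Your proposal is correct and follows essentially the same approach as the paper: restrict a $G$-map to $C_G(\sigma)$, observe it preserves isotypic components, and check that on each $V_{\lambda}\odot_{\mathbb C}q^{m_{\lambda}/l}$ the same underlying linear map is $\Lambda_G(\sigma)$-equivariant, with functoriality axioms immediate. You are somewhat more explicit than the paper in two places---verifying that the $C_G(\sigma)\times\mathbb{R}$-action on each isotypic piece descends through $\langle(\sigma,-1)\rangle$, and deducing functoriality of the two derived constructions from functoriality of their building blocks rather than arguing ``similarly''---but these are elaborations of the same argument, not a different route.
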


\begin{proof}
Let $f: V\longrightarrow W$ be a $G-$equivariant map. Then $f$ is
$C_G(\sigma)-$equivairant for each $\sigma\in G^{tors}$. For each
irreducible complex $C_G(\sigma)-$representation $\lambda$, $f:
V_{\lambda}\longrightarrow W_{\lambda}$ is
$C_G(\sigma)-$equivairant. And $f_{\sigma}:
(V_{\lambda})_{\sigma}\longrightarrow
(W_{\lambda})_{\sigma},\mbox{  } v\mapsto f(v)$ with the same
underlying spaces is well-defined and is
$\Lambda_G(\sigma)-$equivariant. It is straightforward to check if
we have two $G-$equivariant maps $f: V\longrightarrow W$ and $g:
U\longrightarrow V$, then $(f\circ g)_{\sigma}=f_{\sigma}\circ
g_{\sigma}.$ So $(-)_{\sigma}$ gives a well-defined functor from
the category of $G-$representations to the category of
$\Lambda_G(\sigma)-$representation.

The other conclusions can be proved in a similar way.\end{proof}
\begin{proposition} Let $H$ and $G$ be two compact Lie groups. Let $\sigma \in G$ and $\tau\in H$. Let $V$ be a $G-$representation and $W$ a $H-$representation.

(i) We have the isomorphisms of representations $(V\oplus
W)_{(\sigma, \tau)}=(V_{\sigma}\oplus
                     W_{\tau})$ as $\Lambda_{G\times H}(\sigma, \tau)\cong
                     \Lambda_G(\sigma)\times_{\mathbb{T}}\Lambda_H(\tau)-$representations;\\
$(V\oplus W)_{(\sigma, \tau)}\oplus (V\oplus W)_{(\sigma,
\tau)}\otimes_{\mathbb{C}}q^{-1}=((V)_{\sigma}\oplus
                    (V)_{\sigma}\otimes_{\mathbb{C}}q^{-1})\oplus ((W)_{\tau}\oplus
                     (W)_{\tau}\otimes_{\mathbb{C}}q^{-1})$ as $\Lambda_{G\times H}(\sigma, \tau)\cong
                     \Lambda_G(\sigma)\times_{\mathbb{T}}\Lambda_H(\tau)-$representations;\\
and  $(V\oplus W)_{(\sigma, \tau)}\oplus (V\oplus W)^{(\sigma,
\tau)}=((V)_{\sigma}\oplus V^{\sigma})\oplus ((W)_{\tau}\oplus
W^{\tau})$ as $\Lambda_{G\times H}(\sigma, \tau)\cong
                     \Lambda_G(\sigma)\times_{\mathbb{T}}\Lambda_H(\tau)-$representations.

(ii) Let $\phi: H\longrightarrow G$ be a group homomorphism. Let
$\phi_{\tau}: \Lambda_H(\tau)\longrightarrow
\Lambda_G(\phi(\tau))$ denote the group homomorphism obtained from
$\phi$. Then we have
$$\phi_{\tau}^*(V)_{\phi(\tau)}=(V)_{\tau},$$
$$\phi_{\tau}^*((V)_{\phi(\tau)}\oplus
(V)_{\phi(\tau)}\otimes_{\mathbb{C}}q^{-1})=(V)_{\tau}\oplus
(V)_{\tau}\otimes_{\mathbb{C}}q^{-1},$$
$$\phi_{\tau}^*((V)_{\phi(\tau)}\oplus V^{\phi(\tau)})=(V)_{\tau}\oplus V^{\tau}$$ as
                    $\Lambda_H(\tau)-$representations.
\end{proposition}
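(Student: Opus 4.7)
The proof is essentially a bookkeeping argument tracking how the isotypic decompositions and the $q$-twists behave under direct sums and under pullback of group homomorphisms, but it requires care because the whole construction $(V)_\sigma$ is defined in terms of eigenvalues of the central element $\sigma$ on each isotypic piece, and those eigenvalues have to be shown stable under all the operations in play. My plan is to begin by pinning down the group isomorphism $\Lambda_{G\times H}(\sigma,\tau) \cong \Lambda_G(\sigma)\times_{\mathbb{T}}\Lambda_H(\tau)$: this follows directly from $C_{G\times H}(\sigma,\tau)=C_G(\sigma)\times C_H(\tau)$ together with the identification of the quotients by $\langle ((\sigma,\tau),-1)\rangle$ versus by the two commuting $\langle (\sigma,-1)\rangle$ and $\langle (\tau,-1)\rangle$, and it is precisely the fact that both $\Lambda$-factors have the same $\mathbb{T}$-projection $\pi$ that gives the fibred product description.

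For part (i), the first step is to decompose $V\oplus W$ into $C_G(\sigma)\times C_H(\tau)$-isotypic components. Since $V$ carries trivial $H$-action and $W$ trivial $G$-action, every irreducible $C_G(\sigma)\times C_H(\tau)$-summand of $V\oplus W$ has the form $\lambda\boxtimes\mathbf{1}$ or $\mathbf{1}\boxtimes\mu$, and the corresponding isotypic pieces are exactly the $V_\lambda$ and $W_\mu$ of the individual isotypic decompositions. The element $(\sigma,\tau)$ acts on $V_\lambda$ by the scalar $e^{2\pi i m_\lambda/l_\sigma}$ (coming from $\sigma$ alone) and on $W_\mu$ by $e^{2\pi i n_\mu/l_\tau}$ (coming from $\tau$ alone), so the twist data $q^{m_\lambda/l_\sigma}$ and $q^{n_\mu/l_\tau}$ that defines $(V\oplus W)_{(\sigma,\tau)}$ agrees factor-by-factor with the twists defining $V_\sigma\oplus W_\tau$. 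The two remaining isomorphisms in (i) follow formally by distributing $\oplus$ and $\otimes_{\mathbb{C}}q^{-1}$ over the first, together with the observation $(V\oplus W)^{(\sigma,\tau)}=V^\sigma\oplus W^\tau$.

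For part (ii), the underlying vector space of $\phi_\tau^*(V)_{\phi(\tau)}$ and of $(V)_\tau$ is the same $V$, so the content is just to check the action of $\Lambda_H(\tau)$ agrees. The decomposition $V=\bigoplus_\lambda V_\lambda$ under $C_G(\phi(\tau))$ may refine further under $C_H(\tau)$ into pieces $V_{\lambda,\nu}$, but the crucial observation is that $\phi(\tau)$ acts on all of $V_\lambda$ as the single scalar $e^{2\pi i m_\lambda/l}$, hence on each $V_{\lambda,\nu}$ the element $\tau$ (acting via $\phi$) acts as the same scalar; so the twist assigned to $V_{\lambda,\nu}$ in the $(V)_\tau$ construction is the identical $q^{m_\lambda/l}$. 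Combined with $\phi_\tau([h,t])=[\phi(h),t]$, which shows that the $\mathbb{R}$-component of the action is preserved, this gives the desired equality of $\Lambda_H(\tau)$-representations. The analogous identities for the other two functors follow by direct-summing and twisting, exactly as in (i). The one genuinely delicate point, which I expect to be the main obstacle, is verifying that the finer $C_H(\tau)$-isotypic refinement of each $V_\lambda$ does not scramble the twist factors — but Schur's lemma saves us, since $\phi(\tau)$ is central in $C_G(\phi(\tau))$ and acts as a scalar on $V_\lambda$, forcing every further $C_H(\tau)$-refinement to inherit the same scalar eigenvalue for $\tau$.
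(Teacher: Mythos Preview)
Your proposal is correct and follows essentially the same approach as the paper's proof: both argue that the isotypic decomposition of $V\oplus W$ under $C_G(\sigma)\times C_H(\tau)$ splits as the isotypic decompositions of the summands, with matching $\mathbb{R}$-twists, and that under restriction along $\phi$ each $C_G(\phi(\tau))$-isotypic piece $V_\lambda$ refines into $C_H(\tau)$-isotypic pieces all carrying the same $\mathbb{R}$-twist. Your write-up is in fact more explicit than the paper's---you spell out the fibred-product identification of $\Lambda_{G\times H}(\sigma,\tau)$, name the irreducibles in (i) as $\lambda\boxtimes\mathbf{1}$ and $\mathbf{1}\boxtimes\mu$, and invoke Schur's lemma to justify the scalar action in (ii)---whereas the paper compresses each of these observations into a single sentence.
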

\begin{proof}
(i) Let $\{\lambda_G\}\mbox{    and    }\{\lambda_H\}$ denote the
sets of all the irreducible $C_G(\sigma)-$\\ representations and
all the irreducible $C_H(\tau)-$representations. Then $\lambda_G$
and $\lambda_H$ are irreducible representations of $C_{G\times
H}(\sigma, \tau)$ via the inclusion $C_G(\sigma)\longrightarrow
C_{G\times H}(\sigma, \tau)$ and $C_H(\tau)\longrightarrow
C_{G\times H}(\sigma, \tau)$.

The $\mathbb{R}-$representation assigned to each $C_{G\times
H}(\sigma, \tau)-$irreducible representation in $V\oplus W$ is the
same as that assigned to the irreducible representations of $V$
and $W$. So we have $$(V\oplus W)_{(\sigma,
\tau)}=(V_{\sigma}\oplus
                     W_{\tau})$$ as $\Lambda_{G\times H}(\sigma, \tau)\cong
                     \Lambda_G(\sigma)\times_{\mathbb{T}}\Lambda_H(\tau)-$representations.

Similarly we can prove the other two conclusions in (i).

(ii) Let $\sigma=\phi(\tau)$. If $(\phi_{\tau}^*V)_{\lambda_H}$ is
a $C_H(\tau)-$subrepresentation of $\phi_{\tau}^*V_{\lambda_G}$,
the $\mathbb{R}-$representation assigned to it is the same as that
to $V_{\lambda_G}$. So we have
$\phi_{\tau}^*(V)_{\phi(\tau)}=(V)_{\tau}$ as
$\Lambda_H(\tau)-$representations.

Similarly we can prove the other two conclusions in
(ii).\end{proof}

\subsection{real $\Lambda_G(\sigma)-$representation}\label{realdef}

In this section  we discuss real $\Lambda_G(\sigma)-$\\
representation and its relation with the complex
$\Lambda_G(\sigma)-$representations introduced in Lemma \ref{cl}.
The main reference is \cite{BT} and \cite{FH}.

Let $G$ be a compact Lie group, $\sigma\in G^{tors}$.
\begin{definition}A complex representation $\rho: G\longrightarrow Aut_{\mathbb{C}}(V)$ is said to be self dual if it is isomorphic to its complex dual
$\rho^*: G\longrightarrow Aut_{\mathbb{C}}(V^*)$ where
$V^*:=Hom_{\mathbb{C}}(V, \mathbb{C})$ and
$\rho^*(g)=\rho(g^{-1})^*$. \end{definition}

\begin{example}\label{rere1}
Let $\rho: C_G(g)\longrightarrow Aut_{\mathbb{R}}(V)$ be an
irreducible complex $C_G(g)-$\\ representation. Then as in Lemma
\ref{cl}, there exists a character $\eta:
\mathbb{R}\longrightarrow \mathbb{C}$ such that
$\rho(g)=\eta(1)I$. And $\rho\odot_{\mathbb{C}} \eta$ is an
irreducible complex representation of $\Lambda_G(g)$. Since
$(\rho\odot_{\mathbb{C}} \eta)^*([\alpha,
t])=\rho\odot_{\mathbb{C}} \eta([\alpha^{-1},
-t])^T=\rho(\alpha^{-1})^T\eta(-t)$, it is not self-dual if $\eta$
is nontrivial. In this case it is of complex type. And
$(V\odot_{\mathbb{C}}\eta)\oplus (V\odot_{\mathbb{C}}\eta)^*$ has
irreducible real form.

If $V$ is of real type, it is the complexification of a real
$C_G(g)-$representation $W$. If $g=e$ and the character $\eta$ we
choose is trivial, $(\rho\odot_{\mathbb{C}} \eta)^*([\alpha,
t])=\rho\odot_{\mathbb{C}} \eta([\alpha^{-1},
-t])^T=\rho(\alpha^{-1})^T\eta(-t)=\rho(\alpha^{-1})^T=\rho(\alpha)=(\rho\odot_{\mathbb{C}}
\eta)[\alpha, t]$ since $V$ is self-dual. In this case $W$ is a
real $\Lambda_G(g)-$representation via $[\alpha, t]\cdot w=\alpha
w$. And $V\odot_{\mathbb{C}}\eta$ is of real type since it is the
complexification of $W$. For  any nontrivial element $g$ in
$G^{tors}$, the $\Lambda_G(g)-$representation
$V\odot_{\mathbb{C}}\eta$  is of complex type, then
$(V\odot_{\mathbb{C}}\eta)\oplus (V\odot_{\mathbb{C}}\eta)^*$ is
of the real type.

If $V$ is of quaternion type, then $V=U_{\mathbb{C}}$ can be
obtained from a quaternion $C_G(g)-$representation $U$ by
restricting the scalar to $\mathbb{C}$.  If $g=e$ and $\eta$ is
trivial, $(\rho\odot_{\mathbb{C}} \eta)^*([\alpha,
t])=\rho\odot_{\mathbb{C}} \eta([\alpha^{-1},
-t])^T=\rho(\alpha^{-1})^T\eta(-t)=\rho(\alpha^{-1})^T=\rho(\alpha)=(\rho\odot_{\mathbb{C}}
\eta)[\alpha, t]$ since $V$ is self-dual.  In this case $W$ is a
quaternion $\Lambda_G(g)-$representation with $[\alpha, t]\cdot
w=\alpha w$. So $V\odot_{\mathbb{C}}\eta$ is of quaternion type.

Consider the case that $V$ is of complex type.  If $g=e$ and
$\eta$ is trivial, $(\rho\odot_{\mathbb{C}} \eta)^*([\alpha,
t])=\rho\odot_{\mathbb{C}} \eta([\alpha^{-1},
-t])^T=\rho(\alpha^{-1})^T\eta(-t)=\rho(\alpha^{-1})^T=\rho(\alpha)\neq(\rho\odot_{\mathbb{C}}
\eta)[\alpha, t]$ since $V$ is not self-dual.  So
$V\odot_{\mathbb{C}}\eta$ is of complex type.
\end{example}
For any compact Lie group, we use $RO(G)$ denote the real
representation ring of $G$. In light of the analysis in Example
\ref{rere1}, we have the following conclusion.
\begin{lemma}Let $\sigma\in G^{tors}$. Then the map $\pi^*: RO\mathbb{T}\longrightarrow RO\Lambda_G(\sigma)$
exhibits $RO\Lambda_G(\sigma)$ as a free $RO\mathbb{T}-$module.

In particular there is an $RO\mathbb{T}-$basis of
$RO\Lambda_G(\sigma)$ given by irreducible real representations
$\{V_{\Lambda}\}$. There is a bijection between $\{V_{\Lambda}\}$
and the set $\{\lambda\}$ of irreducible real representations of
$C_G(\sigma)$. When $\sigma$ is trivial, $V_{\Lambda}$ has the
same underlying space $V$ as $\lambda$. When $\sigma$ is
nontrivial,
$V_{\Lambda}=((\lambda\otimes_{\mathbb{R}}\mathbb{C})\odot_{\mathbb{C}}\eta)\oplus
((\lambda\otimes_{\mathbb{R}}\mathbb{C})\odot_{\mathbb{C}}\eta)^*$
where $\eta$ is a complex $\mathbb{R}-$representation such that
$(\lambda\otimes_{\mathbb{R}}\mathbb{C})(\sigma)$ acts on
$V\otimes_{\mathbb{R}}\mathbb{C}$ via the scalar multiplication by
$\eta(1)$. The dimension of $V_{\Lambda}$ is twice as that of
$\lambda$.\end{lemma}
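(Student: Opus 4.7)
The plan is to mirror the strategy that works for $R\Lambda_G(\sigma)$ in Lemma~\ref{cl} but to track the Frobenius--Schur type of each irreducible representation, since $RO$ does not detect complex-conjugate pairs the way $R$ does. First I would reduce to the complex side: tensoring with $\mathbb{C}$ gives a map $RO\Lambda_G(\sigma)\to R\Lambda_G(\sigma)$ whose image consists of self-dual classes, and Lemma~\ref{cl} already expresses $R\Lambda_G(\sigma)$ as a free $R\mathbb{T}$-module on irreducibles of the form $\rho\odot_{\mathbb{C}}\chi_{\rho}$, one for each complex irreducible $\rho$ of $C_G(\sigma)$.

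Next I would list the candidate real irreducibles $V_\Lambda$ of $\Lambda_G(\sigma)$ indexed by real irreducibles $\lambda$ of $C_G(\sigma)$, following Example~\ref{rere1}. For each real irreducible $\lambda$, let $\eta$ be the one-dimensional $\mathbb{R}$-character determined by the scalar action of $\sigma$ on the isotypic $(\lambda\otimes_{\mathbb{R}}\mathbb{C})$-piece. When $\sigma=e$ one may take $\eta$ trivial and set $V_\Lambda=\lambda$ (promoted through $\Lambda_G(\sigma)\twoheadrightarrow C_G(\sigma)$); this preserves Frobenius--Schur type. When $\sigma$ is nontrivial, the twist $\eta$ is nontrivial, so $(\lambda_{\mathbb{C}}\odot \eta)$ is never self-dual, and the real irreducible is forced to be $V_\Lambda=(\lambda_{\mathbb{C}}\odot\eta)\oplus(\lambda_{\mathbb{C}}\odot\eta)^*$, of real type and of twice the real dimension of $\lambda$. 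I would verify irreducibility by checking that its complexification is the sum of two non-isomorphic complex irreducibles of $\Lambda_G(\sigma)$, hence is a minimal self-dual complex representation.

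To prove the freeness, I would establish a bijection between $RO\mathbb{T}$-linear combinations of the $V_\Lambda$ and general real $\Lambda_G(\sigma)$-representations by identifying the image under complexification inside $R\Lambda_G(\sigma)$. Concretely, every self-dual element of $R\Lambda_G(\sigma)$ may be written uniquely as an $R\mathbb{T}$-combination of the self-dual basis elements $\rho\odot_{\mathbb{C}}\chi_{\rho}$ (for $\rho$ of real or quaternion type and $\sigma=e$) together with pairs $(\rho\odot\chi)\oplus(\rho\odot\chi)^*$ (for complex-type $\rho$ or for nontrivial $\sigma$). Descending these decompositions through the Frobenius--Schur classification produces an $RO\mathbb{T}$-expansion in the $V_\Lambda$'s, and the uniqueness follows from the $R\mathbb{T}$-basis statement of Lemma~\ref{cl} together with the injectivity of complexification on isomorphism classes of real representations.

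The main obstacle I expect is bookkeeping at the boundary between the $\sigma=e$ case and the $\sigma\ne e$ case: for the trivial twist $\eta=1$ the Frobenius--Schur type of $V_\Lambda$ agrees with that of $\lambda$ (so complex-type $\lambda$ must be doubled), whereas for nontrivial $\eta$ the twisting forces everything to complex type and the doubling is automatic. Getting a single uniform statement of the basis---and matching the count of irreducibles on both sides without double-counting the pair $\{\rho\odot\eta,(\rho\odot\eta)^*\}$---is the step that requires the most care; everything else is a formal consequence of Lemma~\ref{cl} combined with the standard real-representation theory summarized in Example~\ref{rere1}.
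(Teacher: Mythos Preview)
The paper does not give a separate proof of this lemma; it is stated immediately after Example~\ref{rere1} with the phrase ``In light of the analysis in Example~\ref{rere1}, we have the following conclusion,'' and no further argument is supplied. Your proposal is therefore not merely consistent with the paper's approach---it \emph{is} the argument the paper gestures at, worked out in considerably more detail: you invoke Lemma~\ref{cl} for the complex picture and Example~\ref{rere1} for the Frobenius--Schur bookkeeping, exactly as the paper intends.

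One caution on the execution: your sentence ``When $\sigma$ is nontrivial, the twist $\eta$ is nontrivial'' is not literally correct, since a given irreducible $\lambda$ of $C_G(\sigma)$ may have $\sigma$ in its kernel even when $\sigma\neq e$, forcing $\eta$ to be trivial for that $\lambda$. The lemma as stated in the paper has the same looseness, so this is not a divergence from the paper but rather a wrinkle you would need to smooth over in a fully rigorous write-up---precisely the ``boundary between the $\sigma=e$ case and the $\sigma\ne e$ case'' you already flagged as the delicate point.
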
%check
As in (\ref{vgrepresentation}), we can construct a functor
$(-)^{\mathbb{R}}_{\sigma}$ from the category of real $G-$\\
representations to the category of real
$\Lambda_G(\sigma)-$representations with
\begin{equation}(V)^{\mathbb{R}}_{\sigma} =
(V\otimes_{\mathbb{R}}\mathbb{C})_{\sigma}\oplus
(V\otimes_{\mathbb{R}}\mathbb{C})^{*}_{\sigma}.\label{realfaithv}\end{equation}
\begin{proposition}
Let $V$ be a faithful real $G-$representation. And let $\sigma\in
G^{tors}$ and $l$ denote its order.  Then
$(V)^{\mathbb{R}}_{\sigma}$ is a faithful real
$\Lambda_G(\sigma)-$representation. \label{farithreallambda}
\end{proposition}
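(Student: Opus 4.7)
The plan is to mimic the proof of Proposition~\ref{farithlambda}(ii), working componentwise on the isotypic decomposition. Suppose $[a,t] \in \Lambda_G(\sigma)$ acts as the identity on $(V)^{\mathbb{R}}_{\sigma}$; the goal is to conclude $[a,t] = [e,0]$. Decompose $V\otimes_{\mathbb{R}}\mathbb{C} = \bigoplus_{\lambda} W_{\lambda}$ into $C_G(\sigma)$-isotypic components, so that $\sigma$ acts on $W_{\lambda}$ by the scalar $e^{2\pi i m_{\lambda}/l} I$ with $0 < m_{\lambda} \leq l$. By the definition recalled in~(\ref{vgrepresentation}) one has $(V\otimes_{\mathbb{R}}\mathbb{C})_{\sigma} = \bigoplus_{\lambda} W_{\lambda}\odot_{\mathbb{C}} q^{m_{\lambda}/l}$, and triviality of $[a,t]$ on the $\lambda$-summand forces
\[
\lambda(a) = e^{-2\pi i m_{\lambda} t/l}\, I
\]
on every irreducible $C_G(\sigma)$-subrep of $W_{\lambda}$.

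Next, I would apply the same reasoning to the second summand $(V\otimes_{\mathbb{R}}\mathbb{C})^{*}_{\sigma}$, whose isotypic components are built from the dual irreps $\lambda^{*}$, on which $\sigma$ acts as $e^{-2\pi i m_{\lambda}/l}$ (so $m_{\lambda^{*}} = l - m_{\lambda}$ when $m_{\lambda} < l$). Triviality on this piece yields $\lambda^{*}(a) = e^{-2\pi i m_{\lambda^{*}} t/l}\, I$, while the contragredient identity $\lambda^{*}(a) = \lambda(a^{-1})^{T}$ combined with the previous constraint gives $\lambda^{*}(a) = e^{2\pi i m_{\lambda} t/l}\, I$. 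Matching the two expressions produces $e^{-2\pi i t} = 1$, i.e.\ $t \in \mathbb{Z}$. This deduction relies on the existence of some $\lambda$ with $m_{\lambda} < l$, which follows from faithfulness of $V$ together with $\sigma$ acting nontrivially on $V\otimes_{\mathbb{R}}\mathbb{C}$; were $\sigma$ to act trivially on every $W_{\lambda}$, faithfulness of the underlying $G$-rep would force $\sigma = e$.

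Once $t \in \mathbb{Z}$ is in hand, the relation $\lambda(a) = e^{-2\pi i m_{\lambda} t/l}\,I = \lambda(\sigma^{-t})$ holds on every isotypic component $W_{\lambda}$. Faithfulness of the complex $G$-representation $V\otimes_{\mathbb{R}}\mathbb{C}$ (inherited from faithfulness of $V$) then forces $a = \sigma^{-t}$, so that $[a,t] = [\sigma^{-t},t] = [e,0]$ in the quotient $\Lambda_G(\sigma) = C_G(\sigma) \times \mathbb{R}/\langle(\sigma,-1)\rangle$. The main obstacle is the bookkeeping: keeping track of the integers $m_{\lambda}$ and $m_{\lambda^{*}}$ under the conventions $0 < m \leq l$ across the duality pairing, and verifying that the contragredient comparison is carried out componentwise rather than globally. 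A minor subsidiary point is confirming the existence of an eigenvalue with $m_{\lambda} < l$, for which one invokes the combination of faithfulness of $V$ and $\sigma \neq e$.
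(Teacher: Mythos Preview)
Your argument follows the paper's strategy---compare the $\mathbb{R}$-characters on the two summands of $(V)^{\mathbb{R}}_\sigma$ to force $t$ to be integral, then use faithfulness of $V$ to determine $a$---and is in fact more careful than the paper's terse version. The paper normalizes $t\in[0,1)$ at the outset and concludes $t=0$, $a=e$; you instead deduce $t\in\mathbb{Z}$ and $a=\sigma^{-t}$, finishing via $[\sigma^{-t},t]=[e,0]$, an equivalent endgame. One interpretive difference worth noting: you read $(V\otimes_{\mathbb{R}}\mathbb{C})^*_\sigma$ as $\bigl((V\otimes_{\mathbb{R}}\mathbb{C})^*\bigr)_\sigma$, assigning character $q^{m_{\lambda^*}/l}$ to the $\lambda^*$-block, so that the two summands impose genuinely distinct constraints and the comparison yields $e^{2\pi i t}=1$ directly. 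The paper (cf.\ Example~\ref{rere1} and the $e^{-2\pi i mt}$ in its proof) appears to intend the $\Lambda_G(\sigma)$-dual $\bigl((V\otimes_{\mathbb{R}}\mathbb{C})_\sigma\bigr)^*$; under that reading the dual summand merely repeats the first constraint via contragredience, and the deduction $e^{2\pi i t}=1$ relies instead on $V\otimes_{\mathbb{R}}\mathbb{C}$ being self-conjugate (so both $\lambda$ and $\lambda^*$ already occur in the \emph{first} summand). Either route works for $\sigma\neq e$. You rightly isolate the edge case: the identity $m_\lambda+m_{\lambda^*}=l$ needs some $m_\lambda<l$, which fails when $\sigma=e$ (then every $m_\lambda=l=1$). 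You note this but do not treat it; the paper's proof shares the same lacuna, and indeed the statement is delicate there under the convention $0<m_\lambda\le l$.
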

\begin{proof}
Let $[a, t]\in \Lambda_G(\sigma)$ be an element acting trivially
on $(V)^{\mathbb{R}}_{\sigma}$. Assume $t\in [0, 1)$. Let $v\in
(V\otimes_{\mathbb{R}}\mathbb{C})_{\sigma}$ and let $v^*$ denote
its correspondence in
$(V\otimes_{\mathbb{R}}\mathbb{C})^*_{\sigma}$. Then $[a, t]\cdot
(v+v^*)=(a e^{2\pi i mt}+a e^{-2\pi i m t})(v+v^*)= v+v^*$ where
$0< m \leq l$ is determined by $\sigma$. Thus  $a$ is equal to
both $e^{2\pi i mt} I$, and $e^{-2\pi i mt}I$. Thus $t=0$ and $a$
is trivial.

So $(V)^{\mathbb{R}}_{\sigma}$ is a faithful real
$\Lambda_G(\sigma)-$representation.\end{proof}
\begin{proposition} Let $H$ and $G$ be two compact Lie groups. Let $\sigma \in G^{tors}$ and $\tau\in H^{tors}$. Let $V$ be a real $G-$representation and $W$ a real $H-$representation.

(i) We have the isomorphisms of representations $(V\oplus
W)^{\mathbb{R}}_{(\sigma, \tau)}=(V^{\mathbb{R}}_{\sigma}\oplus
                     W^{\mathbb{R}}_{\tau})$ as $\Lambda_{G\times H}(\sigma, \tau)\cong
                     \Lambda_G(\sigma)\times_{\mathbb{T}}\Lambda_H(\tau)-$representations.

(ii) Let $\phi: H\longrightarrow G$ be a group homomorphism. Let
$\phi_{\tau}: \Lambda_H(\tau)\longrightarrow
\Lambda_G(\phi(\tau))$ denote the group homomorphism obtained from
$\phi$. Then
$\phi_{\tau}^*(V)^{\mathbb{R}}_{\phi(\tau)}=(V)^{\mathbb{R}}_{\tau},$
as $\Lambda_H(\tau)-$representations.\end{proposition}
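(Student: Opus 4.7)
The plan is to reduce both statements to the analogous assertions already proved in Section \ref{prere} for the complex functor $(-)_{\sigma}$, via the definition
\[
(V)^{\mathbb{R}}_{\sigma} \;=\; (V\otimes_{\mathbb{R}}\mathbb{C})_{\sigma}\;\oplus\;(V\otimes_{\mathbb{R}}\mathbb{C})^{*}_{\sigma}
\]
given in (\ref{realfaithv}). The two required observations are that complexification and complex duality behave well under external direct sum and under pullback along a group homomorphism.

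For part (i), I would first note that there is a canonical isomorphism
\[
(V\oplus W)\otimes_{\mathbb{R}}\mathbb{C} \;\cong\; (V\otimes_{\mathbb{R}}\mathbb{C})\;\oplus\;(W\otimes_{\mathbb{R}}\mathbb{C})
\]
of complex $G\times H$-representations, and similarly for the $\mathbb{C}$-linear dual. Applying the complex-representation statement (part (i) of the previous proposition) to each summand, I obtain
\[
\bigl((V\oplus W)\otimes_{\mathbb{R}}\mathbb{C}\bigr)_{(\sigma,\tau)} \;\cong\; (V\otimes_{\mathbb{R}}\mathbb{C})_{\sigma}\;\oplus\;(W\otimes_{\mathbb{R}}\mathbb{C})_{\tau},
\]
and the corresponding identity for the duals. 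Summing the two identities and reorganising produces exactly $(V^{\mathbb{R}}_{\sigma}\oplus W^{\mathbb{R}}_{\tau})$, and the resulting isomorphism is manifestly $\Lambda_{G\times H}(\sigma,\tau)\cong \Lambda_G(\sigma)\times_{\mathbb{T}}\Lambda_H(\tau)$-equivariant because the $\mathbb{T}$-factor acts through the identical characters $\eta$ on each isotypic piece (the character is determined by the action of $(\sigma,\tau)$ on the isotypic component, which splits as the actions of $\sigma$ and $\tau$ on the respective summands).

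For part (ii), the key facts are that $\phi^{*}(V\otimes_{\mathbb{R}}\mathbb{C})=(\phi^{*}V)\otimes_{\mathbb{R}}\mathbb{C}$ and that $\mathbb{C}$-linear duality commutes with restriction. Applying the complex-representation statement (part (ii) of the previous proposition) to each of the two summands $(V\otimes_{\mathbb{R}}\mathbb{C})_{\phi(\tau)}$ and $(V\otimes_{\mathbb{R}}\mathbb{C})^{*}_{\phi(\tau)}$ separately yields
\[
\phi_{\tau}^{*}(V\otimes_{\mathbb{R}}\mathbb{C})_{\phi(\tau)} \;=\; (V\otimes_{\mathbb{R}}\mathbb{C})_{\tau}, \qquad \phi_{\tau}^{*}(V\otimes_{\mathbb{R}}\mathbb{C})^{*}_{\phi(\tau)} \;=\; (V\otimes_{\mathbb{R}}\mathbb{C})^{*}_{\tau},
\]
and summing gives the claim.

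The only subtle point, and the one I would take most care with, is tracking the $\mathbb{T}$-action (equivalently, the character $\eta$) under these identifications: one must verify that the character chosen for a given $C_{G\times H}(\sigma,\tau)$-isotypic component in $(V\oplus W)\otimes_{\mathbb{R}}\mathbb{C}$ in the construction $(-)_{(\sigma,\tau)}$ agrees, under the decomposition $C_{G\times H}(\sigma,\tau)\supseteq C_G(\sigma)\times C_H(\tau)$, with the product of characters chosen for the isotypic components of $V\otimes_{\mathbb{R}}\mathbb{C}$ and $W\otimes_{\mathbb{R}}\mathbb{C}$ separately; this is true because the character is determined by the scalar through which $(\sigma,\tau)$ acts, and that scalar factors as the product of the two scalars. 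An analogous remark for part (ii) uses the fact that $\phi(\tau)$ and $\tau$ act by the same scalar on corresponding isotypic pieces through $\phi_{\tau}$. Everything else is essentially a formal diagram-chase.
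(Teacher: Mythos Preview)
Your proposal is correct. The paper itself does not give a proof of this proposition---it simply states ``The proof is left to the readers''---so there is no argument to compare against in detail. Your reduction to the already-established complex case via the defining formula $(V)^{\mathbb{R}}_{\sigma} = (V\otimes_{\mathbb{R}}\mathbb{C})_{\sigma}\oplus(V\otimes_{\mathbb{R}}\mathbb{C})^{*}_{\sigma}$ is exactly the natural route the paper implicitly intends, and your care in tracking the $\mathbb{T}$-character on isotypic pieces is the only point that requires any thought; you have identified it correctly.
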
 The proof
is left to the readers.

\section{Tedious proofs}\label{tediousproof}
\subsection{The proof of Lemma \ref{etaEcontinuous}}\label{etaEcontinuousProof}
\begin{proof}
When $v_1$ is infinity, $\eta_g(G, V)(v_1)$ is the basepoint of
$F_g(V)$. So by the construction of $QE_g(G, V)$ in Proposition
\ref{QEinal}, $v=v_1\wedge v_2$
 is mapped to the basepoint of $QE_g(G, V)$.
When $v_2$ is infinity, $\eta^{QE}_g(G, V)(v)$ is the basepoint.
So $\eta^{QE}_g(G, V)$ is well-defined. And since $\eta_g(G, V)$
is $C_G(g)-$equivariant, $\eta^{QE}_g(G, V)$ is
$C_G(g)-$equivariant.

Next we prove $\eta^{QE}_g(G, V)$ is continuous by showing for
each point in $QE_g(G, V)$, there is an open neighborhood of it
whose preimage is open in $S^V$. Consider a point $x$ in the image
of $\eta^{QE}_g(G, V)$ represented by $t_1a+t_2b$.

\textbf{Case I: } $0<t_2<1$ and $a$ is not the basepoint of
$F_g(G, V)$.

Let $A$ be an open neighborhood of $a$ in $F_g(G, V)$ not
including the basepoint. We can find such an $A$ since $F_g(G, V)$
is Hausdorff. Let $\delta>0$ be a small enough value. Let $U_{x,
\delta}$ be the open neighborhood of $x$
$$U_{x, \delta}:=\{[s_1\alpha + s_2\beta]\in QE_g(G, V)| \alpha\in A,
|s_2-t_2|<\delta, \|\beta-b\|<\delta\}.$$

Then $\eta^{QE}_g(G, V)^{-1}(U_{x, \delta})$ is the smash product
of $\eta_g(G, V)^{-1}(A), $ which is open in $S^{V^g}$, and an
open subset of $S^{(V^g)^{\perp}}$
$$\{w\in S^{(V^g)^{\perp}}|
t_2-\delta<\|w\|<t_2+\delta, \|w-b\|<\delta\}.$$ So it's open in
$S^V$.

\textbf{Case II:  } $t_2=0$ and $a$ is not the basepoint of
$F_g(G, V)$.

Let $A$ be an open neighborhood of $a$ in $F_g(G, V)$ not
including the basepoint. Let $\delta>0$ be a small enough value.
Let $W_{x, \delta}$ be the open neighborhood of $x$
$$W_{x, \delta}:=\{[s_1\alpha + s_2\beta]\in E_g(G, V)| \alpha\in A,
|s_2|<\delta, \|\beta-b\|<\delta\}.$$

Then $\eta^{QE}_g(G, V)^{-1}(W_{x, \delta})$ is the smash product
of $\eta_g(G, V)^{-1}(A), $ which is open in $S^{V^g}$, and an
open subset of $S^{(V^g)^{\perp}}$
$$\{w\in S^{(V^g)^{\perp}}|
\|w\|<\delta, \|w-b\|<\delta\}.$$ So it's open in $S^V$.

\textbf{Case III:   } $x$ is the basepoint $x_g$ of $QE_g(G, V)$.

Let $A_0$ be an open neighborhood of the basepoint $c_0$.
%$F_g(G, V)$. Let $\delta_w,$ $\delta_y$, $\delta_z$ be some small value.
For any point $w$ of the form $t_1c_0+t_2b$ in the the space
$QE'_g(G, V)$ with $0<t_2<1$,  let $U_{w, \delta_w}$ denote the
open subset of $QE_g(G, V)$
$$\{[s_1\alpha + s_2\beta]\in QE_g(G, V)| \alpha\in A_0,
|s_2-t_2|<\delta_w, \|\beta-b\|<\delta_w\}$$ with $\delta_w$ small
enough.
%Let $y$ be the point $1c_0+0$ in $E'_g(G, V)$ and
Let $W_{\delta}$ denote the open subset of $QE_g(G, V)$
 $$\{[s_1\alpha + s_2\beta]\in QE_g(G, V)| \alpha\in A_0, |s_2|<\delta,
\|\beta-b\|<\delta\}$$ with $\delta$ small enough.

%For any point $z$ goes over all the points $0+ 1b$,

For any $b\in S(G, V)_g$ with $\|b\|\leqslant 1$, let $V_{b,
\delta_b}$ denote the open subset of $QE_g(G, V)$
$$\{[s_1\alpha + s_2\beta]\in QE_g(G, V)| %\alpha\in \eta(S^{V^g}),
s_2>1-\delta_b, \|\beta-b\|<\delta_b\}$$ with $\delta_b$ small
enough.

We consider the open neighborhood $U$ of $x$ that is the union of
the spaces defined above
$$U:=(\bigcup_{w} U_{w, \delta_w})\cup W_{\delta}\cup
(\bigcup_{b} V_{b, \delta_b})$$ where $w$ goes over all the points
of the form $[t_1c_0+t_2b]$ in $QE_g(G, V)$ with $0<t_2<1$,  and
$b$ goes over all the points in $S(G, V)_g$ with $\|b\|\leqslant
1$.

The preimage of each $U_{w, \delta_w}$ and $W_{\delta}$ is open,
the proof of which is analogous to Case I and II. The preimage of
$V_{b, \delta_b}$ is the smash product of $S^{V^g}$ and the open
set of $S^{(V^g)^{\perp}}$ $$\{w_2\in S^{(V^g)^{\perp}} |
\|w_2\|>1-\delta_b, \|w_2-b\|<\delta_b\},$$ thus, is open.

The preimage of $U$  is the union of open subsets in $S^V$, thus,
open.

Therefore, The map $\eta^{QE}_g(G, V)$ defined in (\ref{finaleta})
is continuous.\end{proof}

\subsection{The proof of Lemma \ref{muEcontinuous}}
\label{muEcontinuousProof}

\begin{proof}

Note that when either $a_1$ is the basepoint of $F_g(G, V)$, or
$a_2$ is the basepoint of $F_h(H, W)$, or $t_2=1$, or $u_2=1$, the
point $[t_1a_1+  t_2b_1]\wedge [u_1a_2 + u_2b_2]$ is mapped to the
basepoint $x_{g, h}$. The spaces $S(G, V)_g$ have the following
properties:

(i) There is no zero vector in any $S(G, V)_g$ by its
construction;

(ii)  For any $b_1\in S(G, V)_g$, $b_2\in S(H, W)_h$, $b_1$, $b_2$
and $b_1+b_2$ are all in $S(G\times H, V\oplus W)_{(g, h)}.$ $b_1$
and $b_2$ are orthogonal to each other, so
$\|b_1+b_2\|^2=\|b_1\|^2+\|b_2\|^2$. Thus,  if $t_2u_2\neq 0$,
$\|b_1+b_2\|\leqslant \sqrt{t_1^2+t_2^2}$.

Therefore, $\mu^{QE}_{(g, h)}((G,V), (H, W))$ is well-defined.

Let $x=[s_1\alpha+s_2\beta]$ be a point in the image of
$\mu^{QE}_{(g, h)}((G,V), (H, W))$. If $s_2$ is nonzero, there is
unique $\beta_1\in S(G, V)_g\cup\{0\}$ and unique $\beta_2\in S(H,
W)_h\cup\{0\}$ such that $\beta=\beta_1+\beta_2$.

For each point in the image, we pick an open neighborhood of it so
that its preimage in $QE_g(G, V)\wedge QE_h(H, W)$ is open.

\textbf{Case I: } $x$ is not the basepoint,  $0<s_1, s_2<1$ and
$\beta_1$ and $\beta_2$ are both nonzero.

%Let $\{\alpha_1\wedge\alpha_2\}$ be the set of all the preimage of$\alpha$ under the map $\mu^E_{g, h}((G,V), (H, W))$

%We can pick an open neighborhood $A(\alpha)$ of $\alpha$ in $F_{(g, h)}(G\times H, V\oplus W)$
Let $A(\alpha)$ be an open neighborhood  of $\alpha$ in $F_{(g,
h)}(G\times H, V\oplus W)$ not containing the basepoint. Let
$\delta>0$ be some small enough value. We consider the open
neighborhood $U_{x, \delta}$ of $x$
$$U_{x, \delta}:= \{[r_1a+r_2d]\mbox{   }|\mbox{  } \|d_1-\beta_1\|<\delta, \|d_2-\beta_2\|<\delta, a\in
A(\alpha), |r_2^2-s_2^2|<\delta \}$$ where  %$$A_g(\alpha)\wedge A_h(\alpha) $$
$d=d_1+d_2$ with $d_1\in S(G, V)_g\cup\{0\}$ and $d_2\in S(H,
W)_h\cup\{0\}$.

%Let $B_{g, x, \delta}$ be the open subset of $S(G, V)_g$ $$\{\}$$

The preimage of $U_{x, \delta}$ is
\begin{align*}\{[t_1a_1+t_2d_1]\wedge [u_1a_2+ u_2d_2]\mbox{  }| \mbox{   }&a_1\wedge a_2\in \mu^F_{(g, h)}((G, V), (H, W))^{-1}(A(\alpha)),\\
&\|d_1-\beta_1\|<\delta, \|d_2-\beta_2\|<\delta, |t_2^2+
u^2_2-s_2^2|<\delta \},\end{align*} where $\mu^F_{(g, h)}((G, V),
(H, W))$ is the multiplication defined in (\ref{muFKg}).
%|t_2-p_1|<\frac{\delta}{2}, |u_2-p_2|<\frac{\delta}{2}, |p_1^2+p_2^2-s^2_2|<\delta^2
%$(p_1, p_2)$ goes over all the pairs such that $p_1^2+p_2^2= s^2_2$ $0\leqslant p_1, p_2\leqslant 1$

Note that $QE_g(G, V)\wedge QE_h(H, W)$ is the quotient space of a
subspace of the product of spaces
$$F_g(G, V)\times S(G, V)_g\times [0, 1]\times F_h(H, W)\times
S(H, W)_h\times [0, 1]$$  and  $U_{x, \delta}$ is the quotient of
an open subset of this product. So it is open in $QE_g(G, V)\wedge
QE_h(H, W)$.

\textbf{Case II:   } $x$ is not the basepoint,  $0<s_1, s_2<1$ and
$\beta\in S(H, W)_{h}$.

Let $A(\alpha)$ be an open neighborhood of $\alpha$ in $F_{(g,
h)}(G\times H, V\oplus W)$ not containing the basepoint. Let
$\delta>0$ be some small enough value. Consider the open
neighborhood $W_{x, \delta}$ of $x$ $$W_{x,
\delta}:=\{[r_1a+r_2d]\mbox{  }|\mbox{   } \|d_1-\beta_1\|<\delta,
\|d_2\|<\delta, a\in A(\alpha), |r^2_2-s^2_2|<\delta\}$$ where
$d=d_1+d_2$ with $d_1\in S(G, V)_g\cup\{0\}$ and $d_2\in S(H,
W)_h\cup\{0\}$.

The preimage of $W_{x, \delta}$ is
\begin{align*}\{[t_1a_1+t_2d_1]\wedge [u_1a_2+ u_2d_2]\mbox{   }|\mbox{   } &a_1\wedge a_2\in \mu^F_{(g, h)}((G, V), (H, W))^{-1}(A(\alpha)),\\
&\|d_1-\beta_1\|<\delta, \|d_2\|<\delta, |t_2^2+
u^2_2-s_2^2|<\delta \}.\end{align*}

It is the quotient of an open subspace of the product $$F_g(G,
V)\times S(G, V)_g\times [0, 1]\times F_h(H, W)\times S(H,
W)_h\times [0, 1].$$ So the preimage of $W_{x, \delta}$ is open in
$QE_g(G, V)\wedge QE_h(H, W)$.

\textbf{Case III:   }  $x$ is not the basepoint,  $0<s_1, s_2<1$
and  $\beta\in S(G, V)_g$. We can show the map is continuous at
such points in a way analogous to Case II.

\textbf{Case IV} $x$ is not the basepoint and $s_2$ is zero.

Let $A(\alpha)$ be an open neighborhood of $\alpha$ in $F_{(g,
h)}(G\times H, V\oplus W)$ not containing the basepoint. Let
$\delta>0$ be some small enough value.

Consider the open neighborhood of $x$ $$B_{x,
\delta}:=\{[r_1a+r_2d]\mbox{  }|\mbox{   }  a\in A(\alpha),
\|d_1\|<\delta, \|d_2\|<\delta,  0\leqslant r_2^2<\delta\}$$ where
$d=d_1+d_2$ with $d_1\in S(G, V)_g\cup\{0\}$ and $d_2\in S(H,
W)_h\cup\{0\}$.

The preimage of $B_{x, \delta}$ is
\begin{align*}\{[t_1a_1+t_2d_1]\wedge [u_1a_2+ u_2d_2]\mbox{   }|\mbox{  } &a_1\wedge a_2\in \mu^F_{(g, h)}((G, V), (H, W))^{-1}(A(\alpha)),\\
&\|d_1\|<\delta, \|d_2\|<\delta, 0\leqslant t_2^2+ u^2_2<\delta
\}.\end{align*}

It is the quotient of an open subspace of the product $$F_g(G,
V)\times S(G, V)_g\times [0, 1]\times F_h(H, W)\times S(H,
W)_h\times [0, 1].$$ So the preimage of $B_{x, \delta}$ is open in
$QE_g(G, V)\wedge QE_h(H, W)$.

\textbf{Case V:   }$x=[s_1\alpha+s_2\beta]$ is the base point.

Let $A_0(\alpha)$ be an open  neighborhood of $\alpha$ in $F_{(g,
h)}(G\times H, V\oplus W)$.
%Let $\delta_w,$ $\delta_y$,  $\delta_z$ be some small value.
For any point $w$ in $QE'_{(g, h)}(G\times H, V\oplus W)$ of the
form $t_1c_0+t_2b$ with $0<t_2<1$ and $b_1$ $b_2$ both nonzero,
let $U_{w, \delta_w}$ be the open subset of $QE_{(g, h)}(G\times
H, V\oplus W)$
$$\{[r_1a+r_2d]\mbox{   }|\mbox{   }
\|d_1-b_1\|<\delta_w, \|d_2-b_2\|<\delta_w, a\in A_0(\alpha),
|r_2^2-t_2^2|<\delta_w \}$$
 with $\delta_w$ small
enough. % for $w$.

For each point $y$  in $QE'_{(g, h)}(G\times H, V\oplus W)$  of
the form $t_1c_0+t_2b$ with $0<t_2<1$ and $b\in S(H, W)_h$, let
$W_{y, \delta_y}$ be the open subset
 $$\{[r_1a+r_2d]\in QE_{(g, h)}(G\times H, V\oplus
W)| \|d_1-b_1\|<\delta_y, \|d_2\|<\delta_y, a\in A_0(\alpha),
|r^2_2-t^2_2|<\delta_y\}$$ with $\delta_y$ small enough. % for $y$,
For each point  $z$ in $QE'_{(g, h)}(G\times H, V\oplus W)$ of the
form $t_1c_0+t_2b$ with $0<t_2<1$ and $b\in S(G, V)_g$, let $V_{z,
\delta_z}$ be the open subset $$\{[r_1a+r_2d]\in QE_{(g,
h)}(G\times H, V\oplus W)| \|d_2-b_2\|<\delta_z, \|d_1\|<\delta_z,
a\in A_0(\alpha), |r^2_2-t^2_2|<\delta_z\}$$ with $\delta_z$ small
enough. % for $z$,
Let $B_{x_0, \delta}$ denote the open set $$\{[r_1a+r_2d]\in
QE_{(g, h)}(G\times H, V\oplus W)| \|d_2\|<\delta, \|d_1\|<\delta,
a\in A_0(c_0), 0 \leqslant r_2<\delta\}$$ with $\delta$ small
enough. For each $\theta$ in $QE'_{(g, h)}(G\times H, V\oplus W)$
of the form $0+ 1b$, let $D_{\theta, \delta_{\theta}}$ be the open
subset
$$\{[r_1a+r_2d]\in QE_{(g, h)}(G\times H, V\oplus W)|
\|d-b\|<\delta_{\theta}, 1\geqslant r_2\geqslant
1-\delta_{\theta}\}$$ with $\delta_{\theta}$ small enough. % for $\theta$.

Then we consider the open neighborhood of $x$ in $QE_{(g,
h)}(G\times H, V\oplus W)$ that is the union of the spaces above
$$U:=(\bigcup_{w} U_{w, \delta_w})\cup (\bigcup_y W_{y,
\delta_y})\cup (\bigcup_{z} V_{z, \delta_z}) \cup B_{x_0, \delta}
\cup (\bigcup_{\theta} D_{\theta, \delta_{\theta}})$$ where $w$
goes over all the points  in $QE'_{(g, h)}(G\times H, V\oplus W)$
of the form $t_1c_0+t_2b$ with $0<t_2<1$ and $b_1$, $b_2$ both
nonzero,
 $y$ goes over all the points in $QE'_{(g, h)}(G\times H, V\oplus W)$ of the form $t_1c_0+t_2b$ with $0<t_2<1$
and $b\in S(H, W)_h$,  $z$ goes over all the points in $QE'_{(g,
h)}(G\times H, V\oplus W)$ of the form $t_1c_0+t_2b$ with
$0<t_2<1$ and $b\in S(G, V)_g$, and $\theta$ goes over  all the
points of the form $0+ 1b$ in $QE'_{(g, h)}(G\times H, V\oplus
W)$.

The preimage of each $U_{x, \delta_x}$,  $W_{y, \delta_y}$, $V_{z,
\delta_z}$, $B_{x_0, \delta}$ is open, the proof of which are
analogous to that of Case I, II, III and IV. The preimage of
$D_{\theta, \delta_{\theta}}$ is
$$\{[t_1a_1+t_2d_1]\wedge [u_1a_2+ u_2d_2]\mbox{  }| \mbox{   } \|d_1+d_2-b\|<\delta_{\theta}, 1-\sqrt{ t_2^2+
u^2_2}<\delta_{\theta} \},$$ which is open. Therefore,  the
preimage of $U$ is open.

Combining all the cases above,  the multiplication $\mu^{QE}_{(g,
h)}((G,V), (H, W))$ defined in (\ref{muEg}) is
continuous.\end{proof}

\subsection{The proof of Lemma \ref{Gorthocommdiaglemma}} \label{GorthocommdiaglemmaProof}

\begin{proof}
In this proof, we identify the end $F_g(G, V)$ in the space $
QE_g(G, V)$ with the points of the form $(a, 0, 0)$, i.e. $1a+00$,
in the space (\ref{qimiaoanoano}) as indicated in Remark
\ref{ananotherEll}. If the coordinate $t_2$ in a point $t_1a+t_2b$
is zero, then $b$ is the zero vector.

(i) Unity.

Let $v\in S^V$ and $w\in S^W$. Let $v=v_1\wedge v_2\mbox{, with
         } v_1\in S^{V^g}\mbox{, and   }v_2\in S^{(V^g)^{\perp}},$
         and
$w=w_1\wedge w_2\mbox{, with            } w_1\in S^{W^h}\mbox{,
and       }w_2\in S^{(W^h)^{\perp}}.$
$$\mu^{QE}_{(g, h)}\big((G, V), (H, W)\big)\circ \big(\eta^{QE}_g(G,
V)\wedge \eta^{QE}_h(H, W)\big)(v\wedge w)$$ is the basepoint if
$\|v_2\|^2+\|w_2\|^2\geqslant 1$. If $\|v_2\|^2+\|w_2\|^2
\leqslant 1$, it equals
\begin{equation}\big[(1-\sqrt{\|v_2\|^2+\|w_2\|^2})\eta_g(G,
V)(v_1)\wedge\eta_h(H, W)(w_1)+
\sqrt{\|v_2\|^2+\|w_2\|^2}(v_2+w_2)\big]
.\label{Gorthocommdiaglemunity1}\end{equation} On the other
direction, $\eta^{QE}_{(g, h)}(G\times H, V\oplus W)(v\wedge w)$
is the basepoint if $\|v_2+w_2\|\geqslant 1$. Note that since
$v_2$ and $w_2$ are orthogonal to each other,
$\|v_2+w_2\|^2=\|v_2\|^2+\|w_2\|^2$.

If $\|v_2+w_2\|\leqslant 1$, it is
\begin{equation}[(1-\sqrt{\|v_2\|^2+\|w_2\|^2})\eta_{g}(G,
V)(v_1)\wedge\eta_h(H, W)(w_1)+
\sqrt{\|v_2\|^2+\|w_2\|^2}(v_2+w_2)]
,\label{Gorthocommdiaglemunity2}\end{equation} which is equal to
the term in (\ref{Gorthocommdiaglemunity1}) by Proposition
\ref{newF} (ii).

(ii) Associativity.

Let $x=[t_1a_1+t_2b_1]$ be a point in $QE_g(G, V)$,
$y=[s_1a_2+s_2b_2]$ a point in $QE_h(H, W)$, and
$z=[r_1a_3+r_2b_3]$ a point in $QE_k(K, U)$.
$$\mu^{QE}_{((g, h), k)}((G\times H, V\oplus W), (K, U))\circ
(\mu^{QE}_{(g, h)}((G, V), (H, W))\wedge Id)(x\wedge y\wedge z)$$
is the basepoint if $t_2^2+s_2^2+r_2^2\geqslant 1$.

If $t_2^2+s_2^2+r_2^2\leqslant 1$,
\begin{align*}&\mu^{QE}_{((g, h), k)}((G\times
H, V\oplus W), (K, U))\circ (\mu^{QE}_{(g, h)}((G, V), (H,
W))\wedge Id)(x\wedge y\wedge z) \\ = &\mu^{QE}_{((g, h),
k)}((G\times H, V\oplus W), (K, U))\\
&([(1-\sqrt{t_2^2+s_2^2})\mu^F_{g, h}((G,V)
, (H, W))(a_1\wedge a_2) + \sqrt{t^2_2+s^2_2}(b_1+b_2)]\wedge z)\\
=& [(1-\sqrt{t_2^2+s_2^2+r_2^2})\mu^F_{((g, h), k)}((G\times H,
V\oplus W), (K, U))(\mu^F_{g, h}((G,V) , (H, W))(a_1\wedge a_2)
\wedge a_3)\\ &+
\sqrt{t^2_2+s^2_2+r^2_2}(b_1+b_2+b_3)]\end{align*}
%Note that $(\sqrt{t_2^2+s_2^2})^2+ u_2^2=(\sqrt{t_2^2+s_2^2+u^2_2})^2.$
Then consider the other direction. $$\mu^{QE}_{(g, (h, k))}((G,
V), (H\times K, W\oplus U))\circ(Id\wedge\mu^{QE}_{(h, k)}(H\times
K, W\oplus U))(x\wedge y\wedge z)$$ is the basepoint if
$t_2^2+s_2^2+r_2^2\geqslant 1$. If $t_2^2+s_2^2+r_2^2\leqslant 1$,
\begin{align*} &\mu^{QE}_{(g, (h, k))}((G, V), (H\times K,
W\oplus U))\circ(Id\wedge\mu^{QE}_{(h, k)}(H\times K, W\oplus
U))(x\wedge y\wedge z)\\ =&\mu^{QE}_{(g, (h, k))}((G, V), (H\times
K, W\oplus U))\\ & (x\wedge [(1-\sqrt{r_2^2+s_2^2})\mu^F_{(h, k)}(
(H, W), (K, U))(a_2\wedge a_3) + \sqrt{r^2_2+s^2_2}(b_2+b_3)])\\
=& [(1-\sqrt{t_2^2+s_2^2+r_2^2})\mu^F_{(g, (h, k))}((G, V),
(H\times K, W\oplus U))(a_1\wedge \mu^F_{(h, k)}( (H, W),
(K, U))(a_2\wedge a_3) )\\
&+ \sqrt{t^2_2+s^2_2+r^2_2}(b_1+b_2+b_3)]\mbox{, which  by
Proposition }\ref{newF}\mbox{ (ii) is equal to }
\end{align*} \begin{align*}&
[(1-\sqrt{t_2^2+s_2^2+r_2^2})\mu^F_{((g, h), k)}((G\times H,
V\oplus W), (K, U))(\mu^F_{(g, h)}((G,V) , (H, W))(a_1\wedge a_2)
\wedge a_3)\\ &+
\sqrt{t^2_2+s^2_2+r^2_2}(b_1+b_2+b_3)].\end{align*}

(iii) Centrality of unit.

Let $v\in S^V$ and  $x=[t_1a+t_2b]$  a point in $QE_h(H, W)$.
$$QE_{(g, h)}(\tau)\circ \mu^{QE}_{(g, h)}((G,V), (H, W))\circ
(\eta^{QE}_g(G, V)\wedge Id)(v\wedge x)$$ is the base point if
$\|v_2\|^2+t_2^2\geqslant 1$. If $\|v_2\|^2+t_2^2\leqslant 1$,  by
Proposition \ref{newF} (ii) it is
\begin{align*}&[(1-\sqrt{\|v_2\|^2+ t_2^2})\mu^F_{(g, h)}((G,V),  (H,
W))(\eta_g(G, V)(v_1)\wedge a)+ \sqrt{\|v_2\|^2+ t_2^2}(v_2+b)]\\
=&[(1-\sqrt{\|v_2\|^2+ t_2^2})\mu^F_{(h, g)}((H, W), (G, V))(a
\wedge \eta_g(G, V)(v_1))+ \sqrt{\|v_2\|^2+
t_2^2}(v_2+b)].\end{align*}
$$\mu^{QE}_{(h, g)}((H, W), (G, V))\circ(Id\wedge\eta^{QE}_h(H, W)) \circ
\tau (v\wedge x)$$ is the base point if $\|v_2\|^2+t_2^2\geqslant
1$. If $\|v_2\|^2+t_2^2\leqslant 1$, it is
$$[(1-\sqrt{\|v_2\|^2+ t_2^2})\mu^F_{(h, g)}((H, W), (G, V))(a \wedge
\eta_g(G, V)(v_1))+ \sqrt{\|v_2\|^2+ t_2^2}(v_2+b)].$$

%So the centrality of unit diagram commutes.

(iv) %According to the formula of $\mu^{QE}_{(g, h)}((G, V), (H,
%W))$ and Proposition \ref{newF} (iv),
$\mu^{QE}_{(g, h)}((G, V), (H, W))(x\wedge y)=
[(1-\sqrt{t_2^2+s_2^2})\mu^F_{g, h}((G,V) , (H, W))(a_1\wedge a_2)
+ \sqrt{t^2_2+s^2_2}(b_1+b_2)]=[(1-\sqrt{t_2^2+s_2^2})\mu^F_{h,
g}( (H, W), (G, V))(a_2\wedge a_1) + \sqrt{t^2_2+s^2_2}(b_2+b_1)]=
\mu^{QE}_{(h, g)}((H, W), (G, V))(y\wedge x).$
\end{proof}

\end{document}